\documentclass{amsart}
\usepackage[colorlinks=true]{hyperref}
\usepackage{mathrsfs}
\usepackage{booktabs}
\usepackage{mathtools}
\usepackage{geometry}
\usepackage{setspace}
\usepackage[english]{babel}
\usepackage[T1]{fontenc}
\usepackage[utf8]{inputenc}
\usepackage{enumitem}
\usepackage{graphicx}
\usepackage{fancyhdr}
\usepackage{amsmath}
\usepackage{amsfonts}
\usepackage{amssymb}
\usepackage{csquotes} 
\usepackage{amsthm}
\usepackage{geometry,rotating,tabstackengine}
\usepackage[arrow, matrix, curve]{xy}
\usepackage{tikz}
\usepackage{pgfplots}
\pgfplotsset{compat=1.18} 
\usepackage{amsfonts}
\usepackage{bbm}
\usepackage{algorithm}
\usepackage{algorithmic}           
\usepackage{listings}
\usepackage{longtable}
\usepackage{comment}
\usetikzlibrary{shapes,positioning}
\newcommand{\CK}{\mathbf{G}}
\newcommand{\A}{\mathcal{A}}
\theoremstyle{plain}
\newtheorem{theorem}{Theorem}[section]

\newtheorem{corollary}[theorem]{Corollary}

\newtheorem{lemma}[theorem]{Lemma}

\theoremstyle{definition}
\newtheorem{definition}[theorem]{Definition}

\newtheorem{proposition}[theorem]{Proposition}
\theoremstyle{remark}
\newtheorem{remark}[theorem]{Remark}

\newtheorem*{remark*}{Remark}
\newtheorem*{example*}{Example}
\newtheorem{example}[theorem]{Example}

\newcommand{\rom}[1]{\uppercase\expandafter{\romannumeral #1\relax}}

\renewcommand{\epsilon}{\varepsilon}
\newcommand{\rank}{\operatorname{rank}}

\newcommand{\codim}{\operatorname{codim}}

\newcommand\CA{{\mathcal A}}

\newcommand\CE{{\mathscr E}}

\newtheorem{innercustomgeneric}{\customgenericname}
\providecommand{\customgenericname}{}
\newcommand{\newcustomtheorem}[2]{%
  \newenvironment{#1}[1]
  {%
   \renewcommand\customgenericname{#2}%
   \renewcommand\theinnercustomgeneric{##1}%
   \innercustomgeneric
  }
  {\endinnercustomgeneric}
}

\newcommand\restr[2]{{
  \left.\kern-\nulldelimiterspace
  #1 
  \littletaller
  \right|_{#2}
  }}

\newcommand\restrtwo[2]{%
  \left.\kern-\nulldelimiterspace
  #1
  \right|_{#2}%
}

\newcustomtheorem{customthm}{Theorem}
\newcustomtheorem{customlemma}{Lemma}

\title{Boolean building set arrangements}

\author{Leonie Mühlherr}
\address{Fakult\"at f\"ur Mathematik,
	Universit\"at Kassel, D-34132 Kassel, Germany}
\email{leonie.muehlherr@mathematik.uni-kassel.de}
\author{Sven Wiesner}
\address{Fakultät für Mathematik, Ruhr-Universität Bochum, D-44780 Bochum, Germany}
\email{sven.wiesner@rub.de}
\usepackage[backend=biber]{biblatex}
\begin{document}
\begin{abstract}
    In this article, we generalize the notion of connected subgraph arrangements, established by Cuntz and Kühne by allowing edges of cardinality greater than two, with a focus on local properties. We show that the class of free connected hypersubgraph arrangements is larger than the class of free connected subgraph arrangements, but that there exists a necessary condition for freeness that refers back to the graphic case. Furthermore, we obtain a unique description of these arrangements by associating them with Boolean building sets. We study the nested complex of the building set in relation to the lattice of intersections, and characterize all simplicial arrangements within this new class. 
 
\end{abstract}
\maketitle
\section{Introduction}

Let $V$ be a vector space of dimension $n$ over $\mathbb{Q}$. Let $x_1,\dots, x_n$ be the dual basis of $V^*$. We introduce the main characters of this article: 

\begin{definition}\label{definition: connected hypersubgraph arrangements}
    Let $G=(N,E)$ be a simple hypergraph with vertex set $N=[n]=\{1,2,\dots, n\}$ and edge set $E\subset 2^N$. We define the \emph{connected hypersubgraph arrangement} $\CA_G$ in $V=\mathbb{Q}^n$ as
\[\CA_G:=\{H_I\mid \emptyset\neq I, G[I] \text{ is connected} \},\]
where $H_I$ is the hyperplane $H_I= \ker\sum_{i\in I}x_i$.
\end{definition} 

This definition is motivated by a similar construction in \cite{cuntzkuehne:subgrapharrangements}, where $\CA_G$ is defined for $G$ a simple graph. From this definition, well-known arrangements such as the braid arrangement and the resonance arrangement can be recovered. Moreover, Cuntz and Kühne achieve a complete characterization of freeness (see Theorem \ref{theorem: FreeConnectedSubgraphArrangements}) and simpliciality of connected subgraph arrangements by combining tools from graph and arrangement theory. Since then, connected subgraph arrangements gained significant attention, as they were studied with a focus on related or distinct properties \cite{freemultiderivationsconnectedsubgraph, connectedsubgrapharrangements}.

As is the case for connected subgraph arrangements, every connected hypersubgraph arrangement is a \emph{$(0/1)$-arrangement}, i.e., we can choose a normal vector for each hyperplane that contains only zeros and ones.  

When generalizing the setting to hypergraphs, two arrangements defined via different hypergraphs may yield the same arrangement (see Example \ref{ex:build}). We solve this problem by  considering them under a new combinatorial angle: a connected hypersubgraph arrangement can be naturally associated to a Boolean building set. 

\begin{lemma}\cite[Lemma~3.9]{feichtnersturmfels:matroidpolybergmanfans} \label{lem:booleanbuilding sets}
    A family $\mathcal{F}$ of subsets of $[n]=\{1,2,\dots,n\}$ is a building set in the Boolean lattice $2^{[n]}$ if
and only if $\mathcal{F}$ contains all singletons $\{i\}, i \in [n]$, and the following condition holds: if
$F, F' \in \mathcal{F}$ and $F \cap F'\neq \emptyset$ then $F \cup F' \in \mathcal{F}$.
\end{lemma}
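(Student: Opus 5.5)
The plan is to unwind the general definition of a building set, following Feichtner--Kozlov, in the special case of the Boolean lattice $L=2^{[n]}$, and check that it reduces to the two stated conditions. Recall the definition: a subset $\mathcal{F}\subseteq L_{>\hat 0}$ is a building set if for every $X\in L_{>\hat 0}$ the join map
\[\varphi_X\colon \prod_{j=1}^{k}[\hat 0,F_j]\longrightarrow[\hat 0,X]\]
is an isomorphism of posets. Here $\{F_1,\dots,F_k\}=\max\mathcal{F}_{\le X}$ are the maximal elements of $\mathcal{F}$ below $X$. In $2^{[n]}$ the join is union and $\hat 0=\emptyset$, so $\varphi_X(A_1,\dots,A_k)=A_1\cup\dots\cup A_k$. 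This map is always order preserving. It is an isomorphism exactly when it is bijective and order reflecting. That happens precisely when the maximal elements $F_1,\dots,F_k$ are pairwise disjoint and their union is $X$: disjointness gives injectivity and order reflection, and covering $X$ gives surjectivity. Conversely, if two of them overlapped, two different tuples would have the same union. So the whole proof rests on the reformulation
\begin{quote}
$\mathcal{F}$ is building $\iff$ for every nonempty $X\subseteq[n]$, the maximal members of $\mathcal{F}$ contained in $X$ partition $X$.
\end{quote}

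For ($\Rightarrow$), first take $X=\{i\}$. The only nonempty subset of $X$ is $\{i\}$ itself, and the maximal members of $\mathcal{F}$ below $X$ must cover $X$, so $\{i\}\in\mathcal{F}$. Next, let $F,F'\in\mathcal{F}$ with $F\cap F'\neq\emptyset$, and put $X=F\cup F'$. Each of $F,F'$ lies in some maximal member $M,M'$ of $\mathcal{F}_{\le X}$. Since $F\cap F'\neq\emptyset$, we get $M\cap M'\neq\emptyset$, and pairwise disjointness of the maximal members forces $M=M'$. Then $M\supseteq F\cup F'=X$, so $M=X$ and $X\in\mathcal{F}$.

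For ($\Leftarrow$), fix $X$ and let $\mathcal{M}$ be the set of maximal members of $\mathcal{F}$ contained in $X$.
\begin{itemize}
\item Covering: every $i\in X$ has $\{i\}\in\mathcal{F}$, and $\{i\}$ lies below some element of $\mathcal{M}$, so $\bigcup\mathcal{M}=X$.
\item Disjointness: suppose two distinct $M,M'\in\mathcal{M}$ intersect. By hypothesis $M\cup M'\in\mathcal{F}$, and $M\cup M'\subseteq X$. It strictly contains $M$ (or $M'$), contradicting maximality.
\end{itemize}
So the maximal members partition $X$, and $\varphi_X$ is an isomorphism.

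The only step needing care is the reformulation of ``$\varphi_X$ is a poset isomorphism'' as ``partition of $X$'', specifically the order-reflecting part. I would handle it directly. If $F_1,\dots,F_k$ are pairwise disjoint, then $\bigcup_j A_j\subseteq\bigcup_j B_j$ with $A_j,B_j\subseteq F_j$ implies, after intersecting both sides with $F_j$, that $A_j\subseteq B_j$ for each $j$. Everything else is routine.
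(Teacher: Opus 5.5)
Your proof is correct. The paper gives no proof of this lemma; it quotes it from Feichtner--Sturmfels. Your argument, reducing the building-set axiom in $2^{[n]}$ to ``the maximal members of $\mathcal{F}$ contained in $X$ partition $X$'' and then checking both directions, is the standard verification.

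One point is worth stating explicitly. The paper's definition requires the components of $\varphi_X$ to be the inclusions $[\emptyset,F_j]\subseteq[\emptyset,X]$. An order isomorphism between lattices preserves joins, so this forces $\varphi_X(A_1,\dots,A_k)=A_1\cup\dots\cup A_k$, which is exactly the map you analyze. This matters, because a merely abstract isomorphism $\prod_j 2^{F_j}\cong 2^X$ can exist even when the $F_j$ overlap. For example, take $F_1=\{1,2\}$, $F_2=\{2,3\}$ and $X=\{1,2,3,4\}$.
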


We prove the following: 
\begin{lemma} \label{lem:hypergrahs and building sets}
    Let $\mathcal{A}_G$ be a connected hypersubgraph arrangement and denote by $\mathcal{F}_{\mathcal{A}_G}$ the set of index sets $I$ such that $H_I \in \mathcal{A}_G$. Then $\mathcal{F}_{\mathcal{A}_G}$ is a building set in the Boolean lattice.  
\end{lemma}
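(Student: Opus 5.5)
The plan is to verify the two conditions of Lemma~\ref{lem:booleanbuilding sets} directly for $\mathcal{F}_{\mathcal{A}_G}=\{I\subseteq[n]\mid I\neq\emptyset,\ G[I]\text{ connected}\}$. First I will fix the notion of connectivity for the induced subhypergraph $G[I]$: its vertex set is $I$, its edges are those $e\in E$ with $e\subseteq I$, and $G[I]$ is connected if any two vertices of $I$ are joined by a walk $i=v_0,e_1,v_1,\dots,e_k,v_k=j$ with $v_{t-1},v_t\in e_t$ and every $e_t\subseteq I$. Under this definition, a single vertex forms a connected hypergraph. (If the paper intends a different convention for induced subhypergraphs, such as traces $e\cap I$, the argument below goes through verbatim, since only the monotonicity of the edge sets under enlarging $I$ is used.)

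\emph{Singletons.} For each $i\in[n]$, the hypergraph $G[\{i\}]$ has a single vertex and is therefore connected. Hence $H_{\{i\}}=\ker x_i\in\mathcal{A}_G$, so $\{i\}\in\mathcal{F}_{\mathcal{A}_G}$.

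\emph{Union condition.} Let $I,J\in\mathcal{F}_{\mathcal{A}_G}$ with $I\cap J\neq\emptyset$, and pick $k\in I\cap J$. Every edge of $G[I]$ or $G[J]$ is contained in $I\cup J$, so it is also an edge of $G[I\cup J]$; consequently every walk in $G[I]$ or $G[J]$ is a walk in $G[I\cup J]$. Now take $u,v\in I\cup J$. Whichever of $I$ or $J$ contains $u$ gives a walk from $u$ to $k$, and likewise a walk from $k$ to $v$. Concatenating them yields a walk from $u$ to $v$ in $G[I\cup J]$. Thus $G[I\cup J]$ is connected and nonempty, so $I\cup J\in\mathcal{F}_{\mathcal{A}_G}$. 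By Lemma~\ref{lem:booleanbuilding sets}, $\mathcal{F}_{\mathcal{A}_G}$ is a building set in $2^{[n]}$.

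The only real obstacle is the well-definedness point: $\mathcal{F}_{\mathcal{A}_G}$ is defined through hyperplanes, so I must make sure that $H_I=H_{I'}$ forces $I=I'$, which ensures that $\mathcal{F}_{\mathcal{A}_G}$ is exactly the family of connected index sets. This holds because the normal vectors $\sum_{i\in I}x_i$ are $0/1$ vectors, and distinct $0/1$ vectors are never proportional. Beyond this, the proof is routine.
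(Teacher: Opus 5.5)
Your proposal is correct and follows the paper's own proof: you verify the two conditions of Lemma~\ref{lem:booleanbuilding sets}, obtaining the singletons from one-vertex connectivity and the union condition by concatenating connections through a common vertex $k\in I\cap J$. Two small points that the paper leaves implicit are handled explicitly in your version: you work with walks inside $G[I]$ and $G[J]$, both of which sit inside $G[I\cup J]$, whereas the paper speaks of ``paths in $G$''; and you note that distinct $0/1$ normal vectors give distinct hyperplanes.
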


In the case of simple graphs, this set is known as the \emph{graphical building set} (see \cite{postnikov:permutohedra}, Example 7.2).
If two hypergraphs produce the same arrangement, their underlying building sets coincide. 
Building sets were introduced by Feichtner and Kozlov \cite{feichtnerkozlov:incidence} generalizing a concept used by De Concini and Procesi for hyperplane arrangement complement compactification \cite{deconciniprocesi:wonderful}. They arise in the context of polytopes and fans \cite{feichtnersturmfels:matroidpolybergmanfans,postnikov:permutohedra},  as well as in convex geometry \cite{backmanndanner:convexgeometry}. 

In this article, we focus on the nested complex of a building set (see Definition \ref{def:nest_set}), analyse the rank of the corresponding intersections in $L(\mathcal{A})$, and compute the $b_2$ coefficient of such arrangements. Furthermore, we showcase a connection between our arrangement and a special class of polytopes associated with building sets, so-called nestohedra (see Subsection \ref {subsec:nesto}).

Since this article generalizes the graph case to hypergraphs, its goal is to broaden the results achieved in \cite{cuntzkuehne:subgrapharrangements} by recovering the employed tools in a more general setting. 
The main freeness result pertains to the underlying connected subgraph arrangement $\A_{\underline{G}}$ (see Definition \ref{Definition: graphs created by 2 building sets}) and is analogous to the result in the graphic case: 
\begin{theorem}\label{theorem: A_B2 is free}
    Let $G=(N,E)$ be a hypergraph. If $\CA_G$ is free, then $\CA_{\underline{G}}$ is free. 
\end{theorem}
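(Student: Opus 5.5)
The plan is to combine the Cuntz--K\"uhne characterization of free connected subgraph arrangements (Theorem~\ref{theorem: FreeConnectedSubgraphArrangements}) with the fact that localizations of free arrangements are free. I read $\underline{G}$ (Definition~\ref{Definition: graphs created by 2 building sets}) as the simple graph on $N$ whose edges are the two-element sets $\{i,j\}$ with $H_{\{i,j\}}\in\CA_G$, equivalently the two-element members of the building set $\mathcal{F}_{\CA_G}$ of Lemma~\ref{lem:hypergrahs and building sets}. I would argue by contraposition. If $\CA_{\underline{G}}$ is not free, Theorem~\ref{theorem: FreeConnectedSubgraphArrangements} gives a forbidden configuration in $\underline{G}$, namely a vertex set $I$ such that $\underline{G}[I]$ belongs to an explicit list of minimal obstructions. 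I would then exhibit a non-free localization of $\CA_G$ attached to $I$, contradicting the freeness of $\CA_G$.

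The first step is to describe localizations along coordinate subspaces. For $I\subseteq N$ let $X_I=\{x_i=0 \text{ for all } i\in I\}$. A hyperplane $H_J$ contains $X_I$ exactly when $J\subseteq I$. Whether $G[J]$ is connected depends only on the hyperedges contained in $J$. Hence $(\CA_G)_{X_I}\cong \CA_{G[I]}$, where $G[I]$ is the induced hypergraph, viewed in $\mathbb{Q}^I$ after factoring out the common subspace. So $\CA_{G[I]}$ is free for every $I$, and $\underline{G[I]}=\underline{G}[I]$. This reduces the problem to a hypergraph $G$ whose graph part $\underline{G}$ is itself one of the minimal non-free obstructions of Cuntz--K\"uhne.

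The second step, which I expect to be the main obstacle, is to handle the hyperedges of size at least three that live on such an obstruction vertex set. These hyperedges add hyperplanes and could a priori repair freeness. I would first try to choose $I$ minimal, so that every proper induced sub-hypergraph is harmless. I would then locate a flat $X$ of $\CA_G$ whose localization no longer sees the large hyperedges. A natural candidate is to take $X$ as the intersection of the hyperplanes $H_J$ with $J$ connected in $\underline{G}[I]$, and to check that no $H_J$ with $G[J]$ connected only through a hyperedge contains $X$. This check should use the building set property of Lemma~\ref{lem:booleanbuilding sets}: intersecting members have their union in $\mathcal{F}$.

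If this fails, the fallback is a direct computation on each obstruction type, such as cycles with chords or the small exceptional graphs. For each type I would show that adding hyperedges keeps the arrangement non-free. One route is to compute the $b_2$ coefficient (as done later in the paper) and show that the characteristic polynomial fails to factor over $\mathbb{Z}$, contradicting Terao's factorization theorem. Another route is to locate a non-free rank-3 localization via Ziegler's multiplicity criterion. Either way, every non-free $\CA_{\underline{G}}$ forces a non-free localization of $\CA_G$, which proves the theorem.
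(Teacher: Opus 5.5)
There is a genuine gap at exactly the step you flagged. The flat you propose cannot work. The sets $J$ connected in $\underline{G}[I]$ include all singletons $\{i\}$, $i\in I$, so the intersection of the corresponding $H_J$ is the coordinate flat $X_I$ itself. Every hyperedge hyperplane $H_e$ with $e\subseteq I$ contains $X_I$. More generally, any flat lying in all hyperplanes of $\CA_{\underline{G}[I]}$ is contained in $X_I$, so its localization contains all of $\CA_{G[I]}$. Hence no localization sees the whole obstruction while missing the hyperedges. What you need instead is a \emph{small} localization of the obstruction whose behaviour under every possible enlargement you can control. The paper uses generic rank-$3$ localizations, $\{H_{124},H_{125},H_{134},H_{135}\}$ for $\CK_3$ and $\{H_{123},H_{125},H_{134},H_{145}\}$ for $\CK_6$. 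Each already equals the localization of $\CA_{K_5}$ at the same flat, so no added hyperplane can enter, and Yoshinaga's theorem (Theorem~\ref{theorem: Yoshinaga generic not free}) finishes these cases. For $\CK_7$ and $\CK_8$ the generic localization is strictly smaller than the corresponding localization of $\CA_{K_{n_j}}$. Exactly three hyperplanes could enter, and each is ruled out separately: a vertex with four neighbours yields $\CK_3$, a contraction yields $\CK_6$, or another generic rank-$3$ localization appears. For $\CK_1,\CK_2$, every $3$- and $4$-subset is already connected, so only $2$-edges can be added.

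Your first step also leaves an unmanageable list. Theorem~\ref{theorem: FreeConnectedSubgraphArrangements} lists the \emph{free} graphs. The minimal non-free configurations under induced subgraphs alone form infinite families, for instance $C_m$ with one pendant vertex for every $m\ge 4$, or trees with two degree-$3$ vertices at arbitrary distance. So neither a case-by-case check nor your $b_2$/factorization fallback can be completed. Non-factorization would in any case have to survive every possible set of added hyperplanes. The missing tool is that contracting an edge is also a localization (Lemma~\ref{lemma: contraction of building set}), which reduces all obstructions to $\CK_1,\dots,\CK_8$. Contracting can turn a genuine hyperedge into a new $2$-edge. So what must be proved for each $\CK_j$ is the robust statement that no free connected hypersubgraph arrangement lies between $\CA_{\CK_j}$ and $\CA_{K_{n_j}}$, and that is exactly what the localizations above deliver.
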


This is a necessary but not sufficient condition for an arrangement to have the freeness property, see Example \ref{ex:freeness_condition} for non-free arrangements with a free underlying connected hypersubgraph arrangement. However, the class of simplicial connected hypersubgraph arrangements coincides with the arrangements defined on simple graphs: 

\begin{theorem}\label{thm:simpliciality}
 Let $G=(N,E)$ be a hypergraph. Then $\CA_G$ is simplicial if and only if $\CA_G$ is a connected subgraph arrangement  with $G=C_3$ or $G=P_n$, that is, if $\mathcal{F}_{\mathcal{A}_G} = \mathcal{F}_{\mathcal{A}_{C_3}}$ or $\mathcal{F}_{\mathcal{A}_G} = \mathcal{F}_{\mathcal{A}_{P_n}}$. 
\end{theorem}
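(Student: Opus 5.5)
The ``if'' direction is already known: for $G=C_3$ or $G=P_n$ the arrangement $\CA_G$ is a connected subgraph arrangement, and Cuntz and Kühne show that these are simplicial \cite{cuntzkuehne:subgrapharrangements}. So the work is the ``only if'' direction. I plan to reduce to small rank using two operations that preserve simpliciality, classify the rank-$3$ Boolean building sets by hand, and then invoke the graphic classification of Cuntz and Kühne.

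\textbf{Step 1: minors of the building set.} I would use the standard fact that localizations $\CA_X$ and restrictions $\CA^X$ of a simplicial arrangement are again simplicial. Both should be expressible in terms of $\mathcal{F}=\mathcal{F}_{\CA_G}$.
\begin{itemize}
\item For $S\subseteq N$, localize at $X=\bigcap_{i\in S}H_{\{i\}}$. The hyperplanes containing $X$ are exactly the $H_I$ with $I\subseteq S$. Hence the essential part of $\CA_X$ is the arrangement of the restricted building set $\mathcal{F}|_S=\{I\in\mathcal{F}\mid I\subseteq S\}$, which is the arrangement of the induced hypergraph $G[S]$.
\item For $T\subseteq N$, restrict to $Y=\bigcap_{j\in T}H_{\{j\}}$. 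Each $H_I$ with $I\not\subseteq T$ restricts to $H_{I\setminus T}$ in the coordinates of $N\setminus T$. I would check, using Lemma \ref{lem:booleanbuilding sets}, that $\mathcal{F}/T=\{I\setminus T\mid I\in\mathcal{F},\ I\not\subseteq T\}$ is again a Boolean building set on $N\setminus T$. The point is that two intersecting sets $I\setminus T$ and $I'\setminus T$ come from intersecting $I,I'$, so $(I\cup I')\setminus T$ lies in the family. Thus $\CA^Y$ is again a connected hypersubgraph arrangement, possibly of a different hypergraph.
\end{itemize}
Consequently the class of simplicial connected hypersubgraph arrangements is closed under these two minor operations.

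\textbf{Step 2: rank $3$.} Up to symmetry, the connected Boolean building sets on $[3]$ are:
\begin{itemize}
\item $P_3$, giving $\{1,2,3,12,23,123\}$;
\item $C_3$, giving all nonempty subsets;
\item $\{1,2,3,123\}$;
\item $\{1,2,3,12,123\}$.
\end{itemize}
For the last two I would count chambers and check that some chamber is not a simplicial cone. For instance, via the characteristic polynomial and the criterion $3f_2=2f_1$ on the projective line arrangement, or simply by exhibiting a quadrilateral chamber. For $\{1,2,3,123\}$ this is immediate: these are four lines in general position in the projective plane, which produce a quadrilateral region. Disconnected building sets give products, and a product is simplicial iff each factor is, so they are handled by induction.

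\textbf{Step 3: reduction to the graphic case.} This is the step I expect to be the main obstacle. Suppose $\mathcal{F}$ is not graphical, i.e.\ $\mathcal{F}$ strictly contains the building set of the underlying graph $\underline{G}$. Choose $S\in\mathcal{F}$ minimal such that $G[S]$ is not connected through $2$-edges. Localizing to $\mathcal{F}|_S$, the maximal proper elements of $\mathcal{F}|_S$ are then graphical pieces whose union is not connected using only $2$-element sets. Next I would choose $T\subseteq S$ so that $\mathcal{F}|_S/T$ is a building set on $3$ elements in which $S\setminus T$ appears without all pairs. Concretely, I would pick $3$ elements spread over the distinct graphical components of $S$ and contract the rest, arguing carefully which $I\setminus T$ survive. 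This yields one of the two non-simplicial rank-$3$ types from Step 2, contradicting simpliciality. The case $|S|=2$ cannot occur, since every $2$-element set in $\mathcal{F}$ comes from a $2$-edge.

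Once $\mathcal{F}$ is known to be graphical, $\CA_G=\CA_{\underline{G}}$ is a connected subgraph arrangement. The classification of Cuntz and Kühne then gives $\underline{G}=C_3$ or $P_n$ (as building sets), which completes the argument.
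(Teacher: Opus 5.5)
Your proof is correct, and it takes a genuinely different route from the paper. The paper uses only localizations (induced hypersubgraphs, and edge contractions realized as localizations). It proceeds in three stages:
\begin{enumerate}
\item It first proves that $\CA_{\underline{G}}$ must itself be simplicial. This needs chamber counts for two rank-$4$ hypergraphs ($880\neq 944$, $1040\neq 1088$) plus the cycle reduction.
\item It then excludes genuine hyperedges meeting a $C_3$-component by a separate case analysis with further counts ($384\neq 404$, $54\neq 56$).
\item Only at the end, for path components, does it contract an inclusion-minimal genuine hyperedge down to a generic arrangement or to $([3],\{\{1,2\},\{1,2,3\}\})$.
\end{enumerate}
You add one more operation: restriction to the coordinate flats $\bigcap_{j\in T}H_{\{j\}}$, i.e.\ the building-set contraction $\mathcal{F}/T$. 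This is legitimate for simpliciality because chambers of $\CA^X$ are faces of chambers of $\CA$. With it you reverse the order of the argument. You show $\mathcal{F}$ is graphical by a direct reduction to two rank-$3$ building sets, and then quote Cuntz--Kühne wholesale. So you need no rank-$4$ computations and no case distinction on the shape of $\underline{G}$. What the paper's route buys is uniformity with its freeness section, where only localizations are usable, since freeness is not inherited by restrictions in general.

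When you write out Step 3, make the structure explicit. By minimality of $S$, two distinct maximal proper elements of $\mathcal{F}|_S$ cannot meet: their union would be $S$, and it would make $\underline{G}[S]$ connected. Hence these maximal elements are exactly the components $K_1,\dots,K_m$, $m\geq 2$, of $\underline{G}[S]$. Then choose $R=S\setminus T$ as follows.
\begin{itemize}
\item If $m\geq 3$, take one point in each of three components; this gives $\{1,2,3,123\}$.
\item If $m=2$, take two points of $K_1$ and one of $K_2$, which is possible since $|S|\geq 3$; this gives $\{1,2,3,12,123\}$.
\end{itemize}
Also, ``without all pairs'' should read ``with at most one pair'': the $P_3$ type lacks a pair but is simplicial.
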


\medskip 
This article is organized as follows: In Section \ref{sec: preliminaries} we recall basic definitions of hyperplane arrangement theory and the previous work of Cuntz and Kühne, as it pertains to our generalizations. In Section \ref{sec:building sets} we discuss a characterization of these new arrangements via Boolean building sets and analyse the nested complex. In Section \ref{sec:hypersubgraphs}, we record first observations on connected hypersubgraph arrangements, which we use to define the underlying connected subgraph arrangement and develop tools to reduce the complexity of arrangements in Section \ref{sec:operations}. In Section \ref{sec:freeness}, we prove results for the class of free arrangements, in Section \ref{sec:simplicial} we discuss simplicial arrangements. Examples obtained through computations for small dimensions can be found in Appendix \ref{sec:appendix}. 

\subsection{Acknowledgements} The authors would like to thank Lukas Kühne and Gerhard Röhrle for helpful discussions and feedback and an anonymous referee for comments that helped improve this manuscript and for pointing us to connections mentioned in Section \ref{subsec:nesto}. 
LM was funded by the Deutsche Forschungsgemeinschaft (DFG, German Research Foundation) –
	Project-ID 491392403 – TRR 358 and supported by German Research Foundation grant 522790373. 
	
\subsection{Use of AI declaration} 

We used AI to correct mistakes in English writing.

\section{Preliminaries} \label{sec: preliminaries}

In this section, we give a brief introduction to hyperplane arrangements and some associated objects and properties. Furthermore, we summarize results obtained by Cuntz and Kühne \cite{cuntzkuehne:subgrapharrangements} in order to motivate the main construction of this paper. 
\subsection{Hyperplane arrangements}
\subsubsection*{Basic definitions}
This section closely follows \cite{orlikterao:arrangements}.
\begin{definition} 
	Let $\mathbb{K}$ be a field and let $V$  be a vector space of dimension $n$ over $\mathbb{K}$. A \emph{hyperplane} $H$ in $V$ is a subspace of dimension $n-1$. A hyperplane arrangement $\mathcal{A}$  is a finite set of hyperplanes in $V$. 
\end{definition} 
\begin{remark}
    In general, hyperplanes can also be affine spaces; all arrangements we consider in this work are the so-called \emph{central} arrangements defined above. 
\end{remark}

\noindent
Let $V^*$ be the dual space of $V$ and $S = S(V^*)$ be the symmetric algebra of $V^*$. Fix a basis $\{x_1,x_2,\dots,x_n\}$ for $V^*$ and identify $S$ with the polynomial algebra $S = \mathbb{K}[x_1,\dots,x_n]$. 
\begin{definition} 
	Let $\mathcal{A}$ be a hyperplane arrangement. Each hyperplane $H \in \mathcal{A}$ is the kernel of a linear polynomial $\alpha_H$ defined up to a constant. The product $Q(\mathcal{A}) = \prod_{H \in \mathcal{A}} \alpha_H$ is called a \emph{defining polynomial} of $\mathcal{A}$. 
\end{definition} 
\noindent
The polynomial $\alpha_H$ is a linear form for all $H$, and $Q(\mathcal{A})$ is a homogeneous polynomial. 

A prominent problem in the theory of hyperplane arrangements is understanding the invariants and characteristic features of arrangements through their combinatorial structure: 
	\begin{definition} 
	Let $L(\mathcal{A})$ be the set of all non-empty intersections of hyperplanes of $\mathcal{A}$ \[L(\mathcal{A}) = \big\{ \textstyle\bigcap_{H\in \mathcal{A'}}H\mid \mathcal{A'} \subseteq \mathcal{A}\big\}\]
	For $X\in L(\A)$ and $n\geq k\in\mathbb{Z}_{\geq0}$, define $\rank(X)=\codim_V X$ and  $\rank(\A)=\rank(\cap_{H\in\A}H)$. 
\end{definition}

\medskip 
Defining an ordering on $L(\mathcal{A})$ by reverse inclusion ($X \leq  Y \Leftrightarrow Y \subseteq X$) gives us a geometric lattice. $L(\mathcal{A})$ is called the \emph{intersection lattice}. 

\begin{definition}\label{def:charac_pol} 
	Define the \emph{Möbius function} $\mu_\CA = \mu: L(\CA) \times L(\CA) \rightarrow \mathbb{Z}$ through the following equations \begin{align*} 
		\mu(X,X) = 1  & \text{ if $X\in L(\CA)$,} \\ 
		\sum_{X \leq Z \leq Y} \mu(X,Z) = 0 & \text{ if $X,Y,Z \in L(\CA)$ and $X < Y$,} \\ 
		\mu(X,Y) = 0 &\text{ otherwise.}
	\end{align*} 
	
	Define the \emph{characteristic polynomial} of $\CA$ as $ \chi(\CA, t) = \sum_{X\in L(\CA)} \mu(V,X)\cdot t^{\dim X}$, and its \emph{Poincaré polynomial} as $\pi(\A, t) = \sum_{X\in L(\CA)} \mu(X) (-t)^{\rank(X)}$. We denote the coefficients of the Poincaré polynomial as $b_i$. 
	
\end{definition}

An important tool to understand an arrangement is to look at its behavior under deletion and restriction: 

	For $X\in L(\mathcal{A})$, define the \emph{localization} $\mathcal{A}_X$ of $\mathcal{A}$ by $\mathcal{A}_X = \{H \in \mathcal{A}\mid X \subseteq H\}$, as well as $(\mathcal{A}^X, X)$, an arrangement in $X$, by $\mathcal{A}^X = \{X \cap H\mid H \in \mathcal{A}\setminus\mathcal{A}_X~\text{and}~X\cap H \not= \emptyset\}$ called the \emph{restriction} of $\mathcal{A}$ to $X$. 
	
	\begin{definition} 
		Let $\mathcal{A}$ be a non-empty arrangement and let $H_0 \in \mathcal{A}$. Let $\mathcal{A'} = \mathcal{A}\setminus\{H_0\}$ and let $\mathcal{A''} = \mathcal{A}^{H_0}$. We call $(\mathcal{A}, \mathcal{A'}, \mathcal{A''})$ a \emph{triple of arrangements with distinguished hyperplane $H_0$}. 
	\end{definition} 
    Localization is useful to us because some of the properties we are interested in are local: 
    \begin{definition}
        	A property $P$ of an arrangement $\mathcal{A}$ is called \emph{local} if it is preserved under localizations, that is, if $\mathcal{A}$ has property $P$, then $\mathcal{A}_X$ has $P$ for all $X\in L(\mathcal{A})$. 
    \end{definition}
    Thus, to prove that an arrangement does not have a local property $P$, it suffices to find one specific localization for which $P$ does not hold. 
    
    The following lemma classifies rank $2$ localizations for $(0/1)$-arrangements.
    \begin{lemma}\cite[Lemma~2.10]{cuntzkuehne:subgrapharrangements}\label{lemma: connected subgraph arrangements are locallyA2}
    	For pairwise distinct nonempty subsets $A_1, A_2, A_3\subseteq  [n]$, and $n\geq1$ the following two conditions are equivalent:
    	\begin{itemize}
    		\item[(i)] $\rank(H_{A_1}\cap H_{A_2}\cap H_{A_3})=2$ and
    		\item[(ii)] $A_{i_1}\dot{\cup}A_{i_2} = A_{i_3}$ for some choice of pairwise different indices $1 \leq i_1, i_2, i_3 \leq 3$ where $\dot{\cup}$
    		denotes a disjoint set union.
    	\end{itemize}
    \end{lemma}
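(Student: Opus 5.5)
Write $\alpha_i=\alpha_{A_i}=\sum_{j\in A_i}x_j$ for the normal vector of $H_{A_i}$. The rank of $H_{A_1}\cap H_{A_2}\cap H_{A_3}$ equals the dimension of $\operatorname{span}(\alpha_1,\alpha_2,\alpha_3)$. The sets are pairwise distinct and nonempty, so the $\alpha_i$ are nonzero, pairwise distinct $(0/1)$-vectors. Two such vectors are never proportional: a scalar multiple of a $0/1$ vector is again a $0/1$ vector only for the scalars $0$ and $1$. Hence the rank is always at least $2$. Condition (i) is therefore equivalent to the three vectors being linearly dependent, i.e.\ to a relation $\lambda_1\alpha_1+\lambda_2\alpha_2+\lambda_3\alpha_3=0$ in which all three $\lambda_i$ are nonzero. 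If one coefficient vanished, the remaining two vectors would be proportional, which we just excluded.

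\emph{(ii)$\Rightarrow$(i).} If $A_{i_1}\dot\cup A_{i_2}=A_{i_3}$, then $\alpha_{i_1}+\alpha_{i_2}=\alpha_{i_3}$. This gives a dependence, so the rank is at most $2$, and by the lower bound it equals $2$.

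\emph{(i)$\Rightarrow$(ii).} Take a dependence with all $\lambda_i\neq 0$ and read it coordinatewise. For each $j\in[n]$ we get $\sum_{i:\, j\in A_i}\lambda_i=0$. A coordinate lying in exactly one $A_i$ would force $\lambda_i=0$, which is impossible. So every $j\in A_1\cup A_2\cup A_3$ lies in either exactly two or all three of the sets.

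Suppose some coordinate lies in all three sets, so $\lambda_1+\lambda_2+\lambda_3=0$. Any coordinate lying in exactly two sets, say $A_a$ and $A_b$, gives $\lambda_a+\lambda_b=0$, and then the third coefficient would vanish, a contradiction. Hence every coordinate of the union lies in all three sets, and $A_1=A_2=A_3$. This contradicts distinctness.

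Thus every coordinate lies in exactly two of the sets. Let $T_{ab}$ be the set of coordinates lying in exactly $A_a$ and $A_b$. Each nonempty $T_{ab}$ forces $\lambda_a=-\lambda_b$.

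If all three $T_{ab}$ were nonempty, we would have $\lambda_1=-\lambda_2=\lambda_3=-\lambda_1$, so $\lambda_1=0$, a contradiction. So at least one of them is empty, say $T_{12}=\emptyset$. Then
\[
A_1=T_{13},\qquad A_2=T_{23},\qquad A_3=T_{13}\dot\cup T_{23}=A_1\dot\cup A_2,
\]
and the union is disjoint because $T_{13}$ and $T_{23}$ are disjoint by construction. This is (ii).

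The only delicate step is the case analysis for (i)$\Rightarrow$(ii). The key is to classify coordinates by the subset of $\{A_1,A_2,A_3\}$ containing them, and then show that the sign constraints on the $\lambda_i$ leave only one of the three pairwise classes empty, which yields the disjoint-union configuration.
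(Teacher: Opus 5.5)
Your proof is correct and complete. The paper gives no proof of this lemma; it is quoted from Cuntz--K\"uhne, so there is no in-paper argument to compare yours against. Your route is a natural self-contained one: you use that distinct nonempty $(0/1)$-vectors are never proportional to reduce (i) to a dependence with all three coefficients nonzero, and then sort coordinates by which of the $A_i$ contain them.

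It is worth stating one point explicitly. The step $\lambda_1=-\lambda_2=\lambda_3=-\lambda_1\Rightarrow\lambda_1=0$ is the only place where you use that the field has characteristic $\neq 2$ (here it is $\mathbb{Q}$). This hypothesis is genuinely needed. Over $\mathbb{F}_2$, the sets $\{1,2\},\{2,3\},\{1,3\}$ satisfy $\alpha_1+\alpha_2+\alpha_3=0$, so (i) holds there while (ii) fails.
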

    We can associate a module to an arrangement $\mathcal{A}$: 
\begin{definition} 
	A $\mathbb{K}$-linear map $\theta: S \rightarrow S$ is a derivation if for $f,g \in S$: \[\theta(f\cdot g) = f\cdot \theta(g)+g\cdot \theta(f).\] Let $\text{Der}_{\mathbb{K}}(S)$ be the \emph{module of derivations of $S$}. \\ Define an submodule of $\text{Der}_{\mathbb{K}}(S)$, called the \emph{module of $\mathcal{A}$-derivations}, by \[D(\mathcal{A}) =\bigcap_{H\in \mathcal{A}} D(\alpha_H) = \{\theta \in \text{Der}_{\mathbb{K}}(S)\mid \theta(\alpha_H)\in \alpha_H S~\text{for all}~H\in \mathcal{A}\}.\]
	The arrangement $\mathcal{A}$ is called \emph{free} if $D(\mathcal{A})$ is a free $S$-module. If $\theta_1,\dots,\theta_n$ form a basis of $D(\A)$, then the set $\exp \A := \{\deg \theta_1, \dots, \deg \theta_n\}$ is called the exponents of $\A$. 
\end{definition} 
\begin{example} 
The \emph{Euler derivation} $\sum_{i = 1}^n x_i D_i$ is in $D(\A)$ for any arrangement $\A$. It is usually denoted by $\theta_E$.
\end{example} 
\begin{definition} \label{def:matcoef} 
			Given derivations $\theta_1,\dots,\theta_n\in D(\mathcal{A})$, define the coefficient matrix $M(\theta_1,\dots,\theta_n)$ as $M_{i,j} = \theta_i(x_j)$.
		\end{definition} 
		\begin{theorem}\cite[Theorem 4.19]{orlikterao:arrangements}(Saito's criterion) \label{thm:saito}

			Given $\theta_1,\dots,\theta_n\in D(\mathcal{A})$, the following two conditions are equivalent: 
			\begin{enumerate} 
				\item $\det M(\theta_1,\dots,\theta_n) = c\cdot Q(\mathcal{A})$ for some $c\in \mathbb{K}^*$. 
				\item $\theta_1,\dots,\theta_n$ form a basis for $D(\mathcal{A})$ over  $S$. 
			\end{enumerate} 
		\end{theorem}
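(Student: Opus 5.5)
The plan is to prove both directions from one divisibility fact: for \emph{any} $\theta_1,\dots,\theta_n\in D(\A)$, the polynomial $Q(\A)$ divides $\det M(\theta_1,\dots,\theta_n)$.

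To prove this, fix $H\in\A$. After a linear change of coordinates we may assume $\alpha_H=x_1$; such a change multiplies $\det M$ by a nonzero constant. In these coordinates the first column of $M$ consists of the entries $\theta_i(x_1)$. Each of these lies in $x_1S$ because $\theta_i\in D(\A)$, so $\alpha_H$ divides $\det M$. The forms $\alpha_H$ for distinct $H\in\A$ are pairwise non-associate primes of the UFD $S$, hence their product $Q(\A)$ divides $\det M$.

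For $(1)\Rightarrow(2)$, condition (1) gives $\det M=cQ(\A)\neq 0$. Thus $\theta_1,\dots,\theta_n$ are linearly independent over $S$, and they are a basis of $\operatorname{Der}_{\mathbb{K}}(S)\otimes \operatorname{Frac}(S)$. Given $\theta\in D(\A)$, write $\theta=\sum f_i\theta_i$ with $f_i\in\operatorname{Frac}(S)$. By Cramer's rule, $f_i=\det M(\theta_1,\dots,\theta,\dots,\theta_n)/\det M$, where $\theta$ replaces $\theta_i$. The numerator is the determinant of a tuple of elements of $D(\A)$, so by the divisibility fact it is divisible by $Q(\A)$. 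The denominator is $cQ(\A)$, hence $f_i\in S$. So the $\theta_i$ generate $D(\A)$, and being independent, they form a basis.

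For $(2)\Rightarrow(1)$, first note that $Q(\A)D_j\in D(\A)$ for every $j$. Writing $Q(\A)D_j=\sum_i g_{ji}\theta_i$ and taking determinants gives $Q(\A)^n=\det(g)\det M$. Hence $\det M\neq 0$, and every prime factor of $\det M$ is some $\alpha_H$. Combined with $Q(\A)\mid\det M$, it remains to show that each $\alpha_H$ divides $\det M$ exactly once.

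This exponent count is the main obstacle, and the plan is to handle it by localizing at the prime ideal $\mathfrak p=(\alpha_H)$. Again take coordinates with $\alpha_H=x_1$. In $S_{\mathfrak p}$ every other $\alpha_{H'}$ is a unit, so $D(\A)_{\mathfrak p}$ consists of the derivations $\theta$ with $\theta(x_1)\in x_1S_{\mathfrak p}$. This module is free with basis $x_1D_1, D_2,\dots,D_n$, whose coefficient determinant is $x_1$. Localization is exact, so $\theta_1,\dots,\theta_n$ also form a basis of $D(\A)_{\mathfrak p}$. Two bases of the same free module differ by an invertible matrix over $S_{\mathfrak p}$, so $\det M$ equals $x_1$ times a unit of $S_{\mathfrak p}$. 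Hence $\alpha_H$ appears in $\det M$ with multiplicity exactly one.

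Therefore $\det M$ is a constant multiple of $\prod_H\alpha_H=Q(\A)$, which is condition (1).
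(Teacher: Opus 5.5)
The paper does not prove this statement; it quotes it from Orlik--Terao, so there is no in-paper argument to compare against. Your proof is correct, and its skeleton is the standard one: first $Q(\mathcal{A})\mid\det M$ for arbitrary $\theta_i\in D(\mathcal{A})$, then Cramer's rule for $(1)\Rightarrow(2)$, then the derivations $Q(\mathcal{A})D_j$ to get $\det M\mid Q(\mathcal{A})^n$.

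The one step you handle in your own way is showing $\alpha_H^2\nmid\det M$, which you do by localizing at $\mathfrak{p}=(\alpha_H)$. That works. Your identification of $D(\mathcal{A})_{\mathfrak{p}}$ with $\{\theta:\theta(x_1)\in x_1S_{\mathfrak{p}}\}$ deserves one more line. For example: if $\theta(x_1)\in x_1S$, then $q\theta\in D(\mathcal{A})$, where $q=Q(\mathcal{A})/x_1$ is a unit in $S_{\mathfrak{p}}$.

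That same $q$ lets you do the step globally, with no commutative algebra. With $\alpha_H=x_1$ as in your argument, the derivations $Q(\mathcal{A})D_1,\,qD_2,\dots,qD_n$ lie in $D(\mathcal{A})$ and have determinant $x_1q^n$. So if the $\theta_i$ form a basis, $\det M$ divides $x_1q^n$, and $x_1\nmid q$ gives $x_1^2\nmid\det M$ directly. This is your local argument with the denominators cleared.

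The global version needs only unique factorization in $S$. Run over all $H\in\mathcal{A}$, it also makes your separate $Q(\mathcal{A})D_j$ step redundant. Your localized version, in exchange, shows more transparently why the exponent is exactly one: at a height-one prime, $D(\mathcal{A})$ is just the derivation module of the single hyperplane $H$.
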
 
\begin{definition}
Let $\text{rank}(\CA)> k \in\mathbb{Z}_{\geq 0}$. We call an arrangement $\A$ \emph{generic} if for all $X\in L(\A)_k$ it holds that $\vert\A_X \vert = k$. 
\end{definition} 

\begin{definition}
	We denote by $M(\CA)=V\setminus (\cup_{H\in\CA} H)$ the complement of $\CA$ in $V$.
An arrangement is called $K(\pi, 1)$ if its complement is a $K(\pi, 1)$-space. 
\end{definition}

\begin{definition} \label{def:simp}
    Let $n\in\mathbb{N}, V:=\mathbb{R}^n$, and $\CA$ an arrangement in $V$. Let $\mathcal{K}(\CA)$ be the set of connected components (chambers) of $V\setminus\cup_{H\in\CA}H$. If every chamber $K$ is an open simplicial cone, i.e., there exist $\alpha_1^\lor,\dots, \alpha_n^\lor\in V$ such that $$K=\left\{\sum_{i=1}^n a_i\alpha_i^\lor\mid a_i>0\text{ for all } i=1,\dots,n\right\}=:\langle \alpha_1^\lor,\dots, \alpha_n^\lor\rangle_{>0},$$ then $\CA$ is called a \emph{simplicial arrangement}.
\end{definition}
\subsubsection*{Important freeness results}

The first result we want to mention is a classic result which considers the arrangement triples $(\mathcal{A}, \mathcal{A'}, \mathcal{A''})$ to establish freeness criteria for a given arrangement: 
\begin{theorem}\label{thm:add-del}(Addition-Deletion-Theorem) \cite[Theorem~4.51]{orlikterao:arrangements}
    Suppose $\CA \not= \Phi_n$. Let $(\mathcal{A}, \mathcal{A'}, \mathcal{A''})$ be a triple of arrangements. Any two of the following statements imply the third: 
    \begin{enumerate}
        \item $\CA$ is free with $\exp \CA = \{e_1,\dots, e_{n-1}, e_n\}$. 
        \item $\CA'$ is free with $\exp \CA' = \{e_1, \dots,e_{n-1}, e_n -1 \}$. 
        \item $\CA''$ is free with $\exp \CA'' = \{e_1,\dots, e_{n-1}\}$. 
    \end{enumerate}
\end{theorem}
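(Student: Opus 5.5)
The statement is the classical Addition--Deletion Theorem. I would prove it with Saito's criterion (Theorem~\ref{thm:saito}) together with two auxiliary facts about the triple $(\CA,\CA',\CA'')$. Write $\alpha_0=\alpha_{H_0}$, $n=\dim V$, and change coordinates so that $\alpha_0=x_n$.

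\textbf{The two auxiliary facts.}

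The first is the sequence of graded $S$-modules
\[0\longrightarrow D(\CA')\xrightarrow{\;\cdot\alpha_0\;} D(\CA)\xrightarrow{\;\rho\;} D(\CA''),\]
which should be exact at the first two places. Here $\rho$ sends $\theta$ to the induced derivation of $\bar S=S/\alpha_0S$.
\begin{itemize}
\item $\rho$ is well defined because $\theta\in D(\CA)$ preserves $\alpha_0 S$.
\item Its image lies in $D(\CA'')$, since each hyperplane of $\CA''$ is the image of some $\alpha_H$ with $H\ne H_0$.
\item Exactness in the middle: $\rho(\theta)=0$ means $\theta(x_j)\in\alpha_0 S$ for all $j$. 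Then $\theta=\alpha_0\psi$, and $\psi\in D(\CA')$ because $\alpha_0$ and $\alpha_H$ are coprime.
\end{itemize}

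The second fact controls derivations of $\CA'$ on $\alpha_0$. Let $b$ be a product of linear forms with $\deg b=|\CA'|-|\CA''|$. Concretely, for each $X\in\CA''$ take all but one of the $\alpha_H$ with $H\cap H_0=X$, so that $\bar b=\prod_{X}\bar\alpha_{H_X}^{\,m_X-1}$. The claim is that $\theta(\alpha_0)\in(\alpha_0,b)$ for every $\theta\in D(\CA')$.

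To prove it, restrict $\theta(\alpha_0)$ modulo $\alpha_0$. Then note that for each $X$ the local arrangement $\CA'_X$ forces divisibility by $\bar\alpha_{H_X}^{m_X-1}$. This reduces to a rank~$2$ computation, using that $\theta(\alpha_H)\in\alpha_H S$ for the $m_X$ hyperplanes through $X$.

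Alongside these, I would use the standard consequence of Saito's criterion that a free arrangement satisfies $\sum_i e_i=|\CA|$.

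\textbf{(2) and (3) imply (1).}
Take a basis $\theta_1,\dots,\theta_n$ of $D(\CA')$ of degrees $e_1,\dots,e_{n-1},e_n-1$, and a basis $\bar\varphi_1,\dots,\bar\varphi_{n-1}$ of $D(\CA'')$.

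Restricting $\theta_1,\dots,\theta_{n-1}$ (after modifying them if necessary) should produce the basis of $D(\CA'')$ up to triangular change. Degree considerations then force the remaining element to satisfy $\theta_n(\alpha_0)\in\alpha_0 S$ after subtracting combinations. Using the second fact, I would replace $\theta_1,\dots,\theta_{n-1}$ by elements lying in $D(\CA)$ and take $\alpha_0\theta_n$ as the last element. The determinant is then $\alpha_0\det M(\theta)=c\,Q(\CA)$, so Saito's criterion gives (1).

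\textbf{(1) and (3) imply (2).}
Take a basis $\theta_1,\dots,\theta_n$ of $D(\CA)$. Since $\CA''$ has exponents $e_1,\dots,e_{n-1}$ and $\rho$ is compatible with degrees, I expect that, after an $S$-linear change of basis, $\rho(\theta_n)=0$. Hence $\theta_n=\alpha_0\psi$ with $\psi\in D(\CA')$ of degree $e_n-1$.

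Then $\det M(\theta_1,\dots,\theta_{n-1},\psi)=\det M(\theta)/\alpha_0=c\,Q(\CA')$, and Saito's criterion gives (2).

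\textbf{(1) and (2) imply (3), the main obstacle.}
This is the step I expect to be hardest, because surjectivity of $\rho$ is not automatic. The counting $\sum e_i=|\CA|=|\CA'|+1$ together with (2) gives $|\CA''|=\sum_{i<n}e_i$. This is exactly the degree needed for Saito's criterion on $\CA''$.

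The plan has four steps.
\begin{enumerate}
\item Show that $\rho(\theta_1),\dots,\rho(\theta_{n-1})$ are independent over $\bar S$.
\item Compute $\det M(\rho\theta_1,\dots,\rho\theta_{n-1})$ as the minor of $M(\theta)$ obtained by deleting the $x_n$-column and the row of an element that can be taken to be $\alpha_0\psi$. This uses (2) to find an element of $D(\CA)$ of the form $\alpha_0\psi$ in degree $e_n$.
\item Reduce modulo $\alpha_0$.
\item Show that this minor equals $Q(\CA'')$ up to a unit, by comparing degrees and divisibility by each $\bar\alpha_X$.
\end{enumerate}
Saito's criterion for $\CA''$ in $\bar S$ then finishes the argument. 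If the choice of basis in step~2 fails, I would fall back on the characteristic-polynomial identity $\chi(\CA,t)=\chi(\CA',t)-\chi(\CA'',t)$ combined with Terao's factorization, to pin down the exponents before applying Saito's criterion.
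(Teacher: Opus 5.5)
The paper gives no proof of its own; it cites \cite[Theorem~4.51]{orlikterao:arrangements}. Your two auxiliary facts are the right ingredients: the exact sequence, and $\theta(\alpha_0)\in(\alpha_0,b)$ for $\theta\in D(\CA')$. But in each of the three implications the step that carries the real content is either missing or wrong.

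In (2)+(3)$\Rightarrow$(1) it is backwards. Suppose the basis element of degree $e_n-1$ satisfied $\theta_n(\alpha_0)\in\alpha_0S$. Then subtracting multiples of it changes no $\theta_i(\alpha_0)$ modulo $\alpha_0$, so it can never move $\theta_1,\dots,\theta_{n-1}$ into $D(\CA)$. In fact you would end with a basis of $D(\CA')$ inside $D(\CA)$, forcing $D(\CA)=D(\CA')$. Also, $\theta_1,\dots,\theta_{n-1}$ cannot be ``restricted'' before they lie in $D(\CA)$. What you need is a basis element $\theta_n$ of degree $e_n-1$ with $\overline{\theta_n(\alpha_0)}=c\,\bar b$ and $c\neq0$. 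Here $\deg b=|\CA'|-|\CA''|=e_n-1$ by (2), (3) and the Saito degree count. Then $\theta_i-(f_i/c)\theta_n\in D(\CA)$ by your $b$-lemma, and Saito finishes. Showing that such a $\theta_n$ exists is exactly where freeness of $\CA''$ is used. Otherwise every basis element of degree $\le e_n-1$ lies in $D(\CA)$, so its row vanishes in the $x_n$-column modulo $\alpha_0$. Expanding $\det M(\theta)\equiv c'\,\overline{Q(\CA')}\not\equiv0\pmod{\alpha_0}$ along that column shows the restrictions of these elements are $\bar S$-independent. That gives one more independent element of degree $\le e_n-1$ in $D(\CA'')$ than its exponents $e_1,\dots,e_{n-1}$ allow.

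In (1)+(2)$\Rightarrow$(3), the identity $|\CA''|=\sum_{i<n}e_i$ does not follow from $\sum e_i=|\CA|=|\CA'|+1$, where $|\CA''|$ never appears. It is an output, obtainable from $\chi(\CA,t)=\chi(\CA',t)-\chi(\CA'',t)$ plus Terao's factorization, or from the $b$-lemma. The real content is the adapted basis. It is trivial that $\alpha_0\psi_n\in D(\CA)$; what you need is that $D(\CA)$ has a basis $\theta_1,\dots,\theta_{n-1},\alpha_0\psi$. One route: $\theta\mapsto\overline{\theta(\alpha_0)}$ identifies $D(\CA')/D(\CA)$ with an ideal of $\bar S$ of projective dimension $\le1$ over $S$. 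Such an ideal is free over $\bar S$, hence principal. A basis element of $D(\CA')$ mapping to its generator gives the adapted basis. Then the mod-$\alpha_0$ expansion and the $b$-lemma show $\det M(\rho\theta_1,\dots,\rho\theta_{n-1})$ is a constant multiple of $Q(\CA'')$.

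Likewise, in (1)+(3)$\Rightarrow$(2), ``after a change of basis $\rho(\theta_n)=0$'' is the entire deletion theorem. It amounts to $\rho$ being onto $D(\CA'')$, i.e.\ to the relation among $\rho(\theta_1),\dots,\rho(\theta_n)$ having a nonzero constant coefficient, and degree bookkeeping alone does not give this. One way to prove it:
\begin{enumerate}
\item Write $\theta_i(\alpha_0)=\alpha_0h_i$, and let $\mu_i$ be the signed maximal minors of the matrix expressing the $\rho(\theta_i)$ in a basis of $D(\CA'')$.
\item Expanding $\det M(\theta)/\alpha_0=c\,Q(\CA')$ modulo $\alpha_0$ along the column $(h_i)$ gives $\sum_i\mu_i\bar h_i=c'\,\bar b$.
\item Apply the $b$-lemma to $\psi$ with $\alpha_0\psi=\sum_i g_i\theta_i$, where $\bar g_i=\mu_i/\gcd(\mu_j)$. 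This forces $\gcd(\mu_j)=1$, so the relation module is $\bar S(-e_n)$.
\item A Hilbert series count using the exponents of $\CA''$ then shows $\operatorname{coker}\rho=0$. So the relation splits and has a constant entry, and your Saito step goes through.
\end{enumerate}
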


The next theorem is stated in a more general form for multiarrangements in \cite{yoshinaga:extendable}; we thus define them here. 

\begin{definition}\cite{ziegler}
	A \emph{multiarrangement} is a pair $(\CA, m)$ consisting of a hyperplane arrangement $\CA$ and a multiplicity function $m: \CA \rightarrow \mathbb{Z}_{> 0}$ that associates with each hyperplane $H$ in $\CA$ a non-negative integer $m(H)$. 
\end{definition}

We can define the freeness of a multiarrangement $(\CA, m)$ as the freeness of the analogous derivation module \[D(\CA, m) := \{\theta \in \text{Der}(S) \mid \theta(\alpha_H) \in \alpha_H^{m(H)}S \text{ for each } H \in \CA\}.\]
An arrangement $\CA$ is called \emph{totally non-free} if $(\CA ,m)$ fails to be free for any multiplicity $m$, with $m(H) \geq 1$ for all $H\in \CA$.

\begin{theorem}\cite[Proposition~4.1]{yoshinaga:extendable}\label{theorem: Yoshinaga generic not free}
   Let $\CA$ be a generic arrangement in $V$ with $n=\dim V\geq 3$. If $\vert\CA\vert>n$, then $(\CA, m)$ is not free for any $m: \CA \rightarrow \mathbb{Z}_{> 0}$.
\end{theorem}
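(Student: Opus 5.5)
We would argue by contradiction. Suppose $(\CA,m)$ is free. The plan has three steps: reduce to $n=3$ by Ziegler--type multirestriction, compare second Betti numbers, and then show that no exponents of a rank~$2$ multiarrangement with at least three lines can satisfy the resulting identity.

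\emph{Step 1: reduction to $n=3$.} Fix $H_0\in\CA$ and pass to the Euler multirestriction $(\CA^{H_0},m^{*})$ in the sense of Abe--Terao--Wakefield. Its multiplicity is $m^{*}(X)=\sum_{H\in\CA_X\setminus\{H_0\}} m(H)$. Since $\CA$ is generic, the $H\cap H_0$ for $H\neq H_0$ are pairwise distinct, so $m^{*}(H\cap H_0)=m(H)$. Moreover $\CA^{H_0}$ is again generic in $H_0$, with $|\CA|-1>n-1$ hyperplanes. Freeness of $(\CA,m)$ implies freeness of this Euler multirestriction, with exponents obtained by deleting the exponent $m(H_0)$. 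Iterating, we may assume $n=3$.

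\emph{Step 2: the Betti number identity for $n=3$.} Write $\CA=\{H_0,\dots,H_k\}$ with $k\ge 3$ and $m_i=m(H_i)$. By genericity, every rank~$2$ localization consists of exactly two hyperplanes and is Boolean. The local--global formula for the second Betti number of a multiarrangement then gives
\[b_2(\CA,m)=\sum_{0\le i<j\le k} m_im_j.\]
Freeness gives $\chi(\CA,m,t)=(t-m_0)(t-d_1)(t-d_2)$, where $(d_1,d_2)$ are the exponents of the rank~$2$ multiarrangement $(\CA^{H_0},m^{*})$ and $d_1+d_2=\sum_{i\ge1}m_i$. Comparing coefficients of $t$ yields
\[d_1d_2=\sum_{1\le i<j\le k} m_im_j,\]
where the right-hand side is taken over the $k\ge 3$ lines of $\CA^{H_0}$.

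\emph{Step 3: ruling out this identity, the main obstacle.} We must show that a rank~$2$ multiarrangement with at least three lines never satisfies $d_1d_2=e_2(m)$, the second elementary symmetric function of the multiplicities.

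\begin{itemize}
\item If some multiplicity dominates, say $m_1\ge \sum_{i\ge2}m_i$, then the exponents are $(m_1,\sum_{i\ge 2} m_i)$. Hence $d_1d_2=e_2(m)-e_2(m_2,\dots,m_k)<e_2(m)$, since at least two lines remain.
\item In the balanced case we would first try to show that some nonzero derivation has degree strictly below the ``Boolean'' value. Concretely, we would use Saito's criterion (Theorem~\ref{thm:saito}) together with a dimension count of the solutions of $\theta(\alpha_i)\in\alpha_i^{m_i}S$ in a fixed degree. This should force $|d_1-d_2|$ to be small while $d_1+d_2=|m|$ is fixed.
\item If that bound is not sharp enough, the fallback is not to fix $H_0$. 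Instead, apply Step~2 for every choice of $H_0$ and sum the resulting identities. Exploiting the symmetry of $e_2$ and the convexity of $d_1d_2$ under the constraint $d_1+d_2=|m|-m_0$ should produce an inequality incompatible with all of them holding at once.
\end{itemize}

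This last inequality is where we expect the real difficulty to lie.
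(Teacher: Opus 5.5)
The paper gives no proof of this statement; it is quoted from Yoshinaga. Your proposal therefore has to stand on its own, and it does not, because the step carrying all the weight is assumed.

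\textbf{The gap is in Steps~1--2.} In Step~1 you claim that freeness of $(\mathcal{A},m)$ forces $m(H_0)$ to be an exponent, and forces the Euler multirestriction to be free with the remaining exponents. Step~2's factorization $\chi(\mathcal{A},m,t)=(t-m_0)(t-d_1)(t-d_2)$ rests entirely on this claim.

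\begin{itemize}
\item What you are invoking is Ziegler's theorem, which holds only for simple arrangements. Its proof uses $\theta_E\in D(\mathcal{A})$ and the splitting $D(\mathcal{A})=S\theta_E\oplus\{\theta\mid\theta(\alpha_{H_0})=0\}$. But $\theta_E\notin D(\mathcal{A},m)$ as soon as some $m(H)\ge 2$.
\item The formula you wrote for $m^*$ is a weighted Ziegler multiplicity, not the Abe--Terao--Wakefield one; the two agree here only because the relevant rank-$2$ localizations are Boolean.
\item The Abe--Terao--Wakefield Euler multiplicity comes with a two-out-of-three addition--deletion theorem. That theorem takes freeness of the deletion (with $m(H_0)$ lowered by one) as an input. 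It never yields freeness of the restriction from freeness of $(\mathcal{A},m)$ alone. Already for $m\equiv 1$, where the Euler restriction is the ordinary restriction, such a statement is false by Edelman--Reiner's counterexample to Orlik's conjecture.
\item The exponent claim is false in general. The polynomial $x^2y^2(x-y)^2z$ defines a free multiarrangement with exponents $(1,3,3)$, yet $m(\{x=0\})=2$ is not an exponent.
\end{itemize}

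For generic $\mathcal{A}$ there is no theorem you can quote, and proving your claim there is no easier than the proposition itself. The reduction to $n=3$ inherits the same defect.

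Without the factor $(t-m_0)$, Step~2 only gives (correctly, via the local--global formula) that the exponents $d_1,d_2,d_3$ of $(\mathcal{A},m)$ satisfy $e_1(d)=e_1(m)$ and $e_2(d)=e_2(m)$. This alone is not contradictory: for $m=(4,2,1,1)$ the triple $d=(2,3,3)$ gives $e_1=8$ and $e_2=21$ on both sides. So the missing idea is a genuine extra constraint tying the exponents to a single hyperplane.

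\textbf{Step~3 is not the obstacle.} Once the factorization is granted, Step~3 is elementary. Let $s=\sum_{i\ge1}m_i$, and let $m_i=M$ be the largest of the $k\ge 3$ multiplicities. Choose $\partial$ constant with $\partial(\alpha_i)=0$. The derivation $\prod_{j\ne i}\alpha_j^{m_j}\,\partial$ lies in the module, so $d_1\le\min(s-M,s/2)$.
\begin{itemize}
\item If $M\ge s/2$, then $d_1d_2\le M(s-M)<e_2(m)$. The difference is $e_2$ of the other $k-1\ge 2$ multiplicities, hence positive.
\item Otherwise $d_1d_2\le s^2/4<e_2(m)$, since $\sum m_i^2\le Ms<s^2/2$.
\end{itemize}
Your balanced-case plan also points the wrong way. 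Forcing $|d_1-d_2|$ to be small makes $d_1d_2$ larger, whereas you need an upper bound on $d_1d_2$, which AM--GM already supplies.
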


\begin{theorem}\cite[Theorem~4.37]{orlikterao:arrangements}\label{theorem: localizations are free}
	If $\CA$ is free, then $\CA_X$ is free for all $X\in L(\CA)$. 
\end{theorem}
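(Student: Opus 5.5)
The plan is to compare $D(\CA_X)$ with $D(\CA)$ after localizing the polynomial ring $S$ at the prime ideal of $X$. Then freeness transfers from $D(\CA)$ to a local statement about $D(\CA_X)$, and a graded Nakayama argument brings it back to the global graded statement.

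First I choose coordinates adapted to $X$. Let $r=\rank(X)$ and pick a basis $x_1,\dots,x_n$ of $V^*$ such that $X$ is the common zero set of $x_1,\dots,x_r$. Every $H\in\CA_X$ contains $X$, so $\alpha_H$ lies in $S_0:=\mathbb{K}[x_1,\dots,x_r]$. I then check that
\[D(\CA_X)\cong \bigl(D_0\otimes_{S_0}S\bigr)\oplus S\,\partial_{r+1}\oplus\dots\oplus S\,\partial_n,\]
where $D_0$ is the graded $S_0$-module of $S_0$-derivations (in $\partial_1,\dots,\partial_r$) preserving the ideals $\alpha_H S_0$ for $H\in\CA_X$. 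This is a routine coefficient check, since $\theta(\alpha_H)$ only involves the coefficients of $\partial_1,\dots,\partial_r$. Consequently $D(\CA_X)$ is free over $S$ if and only if $D_0$ is free over $S_0$. Moreover, $D_0$ is a finitely generated graded $S_0$-module.

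Next I localize at $P=(x_1,\dots,x_r)\subset S$, the prime ideal of $X$. For $H\notin\CA_X$ the form $\alpha_H$ does not vanish identically on $X$, so $\alpha_H\notin P$ and $\alpha_H$ is a unit in $S_P$. Hence the condition $\theta(\alpha_H)\in\alpha_H S_P$ is vacuous for these $H$. Since forming $D(\cdot)$ commutes with localization (it is an intersection of finitely many submodules of the free module $\mathrm{Der}(S)$, and localization is exact), we get $D(\CA)_P=D(\CA_X)_P$. As $D(\CA)$ is free, $D(\CA_X)_P$ is a free $S_P$-module.

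The last step, which I expect to be the main obstacle, is descending from this local freeness to global freeness of $D_0$. I would note that $P\cap S_0=\mathfrak m_0$ is the irrelevant ideal of $S_0$, and that $S_P$ is a flat local extension of $(S_0)_{\mathfrak m_0}$, hence faithfully flat. Also $D(\CA_X)_P$ is $\bigl((D_0)_{\mathfrak m_0}\otimes S_P\bigr)$ plus a free summand. From this, $(D_0)_{\mathfrak m_0}\otimes S_P$ is projective over $S_P$, and faithfully flat descent of flatness for finitely presented modules makes $(D_0)_{\mathfrak m_0}$ flat, hence free over the local ring $(S_0)_{\mathfrak m_0}$. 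Finally, for a finitely generated graded module over the graded ring $S_0$, freeness at the irrelevant ideal implies global freeness. To see this, take a minimal homogeneous generating set via graded Nakayama; its cardinality equals $\dim_{\mathbb{K}} D_0/\mathfrak m_0 D_0$, which is the rank of $(D_0)_{\mathfrak m_0}$. The kernel of the resulting surjection from a graded free module is graded and vanishes after localization at $\mathfrak m_0$, so it is $0$ because every nonzero homogeneous element survives localization at $\mathfrak m_0$. Hence $D_0$ is free, and therefore $D(\CA_X)$ is free.
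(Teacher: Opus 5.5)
Your proof is correct. The paper gives no argument of its own: the statement is quoted from Orlik--Terao and used only as the black box that freeness is local. The delicate points in your argument all hold:
\begin{itemize}
\item $D(\CA)_P=D(\CA_X)_P$, since localization commutes with the finitely many intersections and preimages defining $D(\cdot)$, and $\alpha_H$ is a unit in $S_P$ for $H\notin\CA_X$.
\item $(S_0)_{\mathfrak m_0}\to S_P$ is flat and local, hence faithfully flat, so flatness of $(D_0)_{\mathfrak m_0}$ descends.
\item In the graded Nakayama step the kernel sits inside a free module over a domain, so it injects into its localization.
\end{itemize}
You also correctly avoided the false shortcut ``$M_{\mathfrak p}$ free $\Rightarrow$ $M$ free'' for a non-irrelevant homogeneous prime. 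You did this by first splitting off the trivial factor and descending to the irrelevant ideal of $S_0$.

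For comparison, a more elementary proof uses only Saito's criterion, which the paper already quotes.
\begin{itemize}
\item Choose $p\in X$ lying on no $H\in\CA\setminus\CA_X$, and pass to coordinates $y=x-p$.
\item The forms $\alpha_H$ with $H\in\CA_X$ stay homogeneous linear in $y$. Hence each $y$-homogeneous component $\theta_i^{(k)}$ of a basis element $\theta_i$ of $D(\CA)$ lies in $D(\CA_X)$.
\item Write $Q(\CA)=Q(\CA_X)Q''$ with $Q''(p)\neq 0$. Then the lowest $y$-degree part of $\det M(\theta_1,\dots,\theta_n)$ is a nonzero multiple of $Q(\CA_X)$.
\item Expanding the determinant multilinearly, some $\det M(\theta_1^{(k_1)},\dots,\theta_n^{(k_n)})$ with $\sum_i k_i=\vert\CA_X\vert$ is nonzero.
\item That determinant is divisible by $Q(\CA_X)$ and has the same degree, so it equals $c\,Q(\CA_X)$ with $c\neq 0$. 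Saito's criterion then says these components form a basis.
\end{itemize}
This route avoids localization and descent entirely and produces an explicit basis. Because $k_i\le\deg\theta_i$, it also shows the exponents of $\CA_X$ are bounded termwise by those of $\CA$. Your route is more conceptual and proves a sharper equivalence: $\CA_X$ is free if and only if $D(\CA)_{I(X)}$ is free over $S_{I(X)}$. This is the form in which locality is used when studying locally free arrangements.
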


\subsection{Connected subgraph arrangements}
The following class of arrangements was introduced by Cuntz and Kühne.
\begin{definition}\cite[Definition~1.1]{cuntzkuehne:subgrapharrangements}
        Let $G=(N, E)$ be an undirected simple graph with a vertex set $N=\{1,\dots,n\}$ and edge set $E$. We define the \emph{connected subgraph arrangement} $\CA_G$ in $V=\mathbb{Q}^n$ as
$\CA_G:=\{H_I\mid \emptyset\neq I\subseteq N \text{ and } G[I] \text{ is connected}\}$,
where $H_I$ is the hyperplane $H_I= \ker\sum_{i\in I}x_i$ and $G[I]$ is the induced subgraph on the vertex set $I\subseteq N$. 
\end{definition}

Their main result is a classification of all graphs $G$ such that $\CA_G$ is a free arrangement.
\begin{theorem}\cite[Theorem~1.6]{cuntzkuehne:subgrapharrangements}\label{theorem: FreeConnectedSubgraphArrangements}
Let $G$ be a connected graph. The connected subgraph arrangement $\CA_G$ is free if and only if $G$ is either

\begin{enumerate} 
	\item a path-graph $P_n$, where $n$ is the number of vertices,
	\item a cycle-graph $C_n$, where $n$ is the number of vertices, 
	\item an almost-path-graph $A_n,k$; a path on $n$ vertices with an additional edge $\{n+1, k\}$, or 
	\item a path-with-triangle-graph $\Delta_{n,k}$; a path on $n$ vertices with two additional edges $\{k, n+1\}, \{k+1, n+1\}$.
\end{enumerate} 
\end{theorem}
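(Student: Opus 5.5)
The proof splits into the \lq\lq if\rq\rq{} direction, done by explicit freeness arguments, and the \lq\lq only if\rq\rq{} direction, done by locality together with a finite list of forbidden induced subgraphs.

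\emph{Reduction to induced subgraphs.} For $J\subseteq N$ put $X_J=\bigcap_{j\in J}H_{\{j\}}$. A hyperplane $H_I$ contains $X_J$ exactly when $I\subseteq J$. Hence
\[(\CA_G)_{X_J}=\{H_I\mid \emptyset\neq I\subseteq J,\ G[I]\text{ connected}\},\]
which is $\CA_{G[J]}$ times an empty factor. By Theorem~\ref{theorem: localizations are free}, freeness of $\CA_G$ therefore forces freeness of $\CA_{G[J]}$ for every induced subgraph $G[J]$. So the \lq\lq only if\rq\rq{} direction reduces to two tasks:
\begin{enumerate}
\item determine the minimal connected graphs $F$ not in the list (1)--(4), i.e.\ those all of whose proper connected induced subgraphs lie in the list;
\item show $\CA_F$ is not free for each such $F$.
\end{enumerate}

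\emph{Sufficiency.} For $P_n$ the connected sets are the intervals $[a,b]$. Substitute $y_k=x_1+\dots+x_k$ (with $y_0=0$). Then $\sum_{i\in[a,b]}x_i=y_b-y_{a-1}$, so $\CA_{P_n}$ is linearly isomorphic to the braid arrangement. It is free with exponents $\{1,\dots,n\}$, or one can apply Saito's criterion (Theorem~\ref{thm:saito}) to the standard Vandermonde-type basis.

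For $C_n$, $A_{n,k}$ and $\Delta_{n,k}$, start from the free path arrangement on a suitable spanning path. Add the remaining hyperplanes one at a time, those whose index set contains the extra vertex (or uses the closing edge), in increasing order of cardinality. At each step apply the Addition--Deletion Theorem~\ref{thm:add-del}. This requires identifying the restriction $\CA''$ to the new hyperplane, which should again be isomorphic to a smaller arrangement from the same families (or a braid arrangement), so that induction on $n$ applies. Verifying that the exponents match, i.e.\ that $|\CA''|$ equals the sum of all but one exponent of the current arrangement, is a counting of connected subsets, done via Lemma~\ref{lemma: connected subgraph arrangements are locallyA2} to understand which hyperplanes collapse on restriction.

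\emph{Necessity.} A purely graph-theoretic step determines the obstructions. A connected graph all of whose induced subgraphs are paths, cycles, almost-paths or paths-with-triangle is itself of one of these forms, unless it contains one of finitely many small induced configurations. These are $K_4$ and $K_4$ minus an edge, a cycle with a chord, a vertex of degree $\geq 4$, two branch vertices, a branch with two long legs beyond the allowed shape, a $C_4$ or longer cycle with a pendant vertex, and so on. I would derive this list by case analysis on the maximum degree and the girth. For each obstruction $F$ (at most about six vertices), show non-freeness in one of two ways. The first is to find a further localization that is generic of rank $3$ with more than $3$ hyperplanes and invoke Theorem~\ref{theorem: Yoshinaga generic not free}. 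The second, failing that, is to compute $\chi(\CA_F,t)$ and observe that it does not factor over $\mathbb{Z}_{\ge0}$ roots, which contradicts Terao's factorization theorem.

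\emph{Main obstacle.} I expect the hardest part to be making the obstruction list provably complete and handling the infinite families in the necessity direction, such as a cycle $C_m$ with a pendant vertex for all $m$. These cannot be settled by finite computation. For them, the plan is to find a bounded-size localization that is not free, for instance one obtained by localizing at an intersection containing the pendant vertex and a few cycle vertices, and to reduce it to a fixed non-free arrangement.
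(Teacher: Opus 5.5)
\textbf{Gap in the ``only if'' direction.} Your reduction to induced subgraphs is correct, but it cannot carry the ``only if'' direction. The finiteness claim you build on it is false: the family (1)--(4) has infinitely many minimal forbidden induced subgraphs. Four infinite families illustrate this:
\begin{itemize}
\item $C_m$ with one pendant vertex, for every $m\ge 4$;
\item $C_m$ with a chord cutting off a triangle, for every $m\ge 5$;
\item two triangles joined by a path of arbitrary length;
\item trees with two degree-$3$ vertices, each carrying two leaves, at arbitrary distance.
\end{itemize}
In each of these graphs, every proper connected induced subgraph is a path, cycle, almost-path or path-with-triangle. So every localization at a coordinate flat $X_J$ with $J\neq N$ is free, and your method detects nothing. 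You would then need a separate non-freeness proof for infinitely many graphs, and you give none. Your suggested remedy, localizing ``at an intersection containing the pendant vertex and a few cycle vertices'', does not help if it means a flat $X_J$. Such a flat just returns $\CA_{G[J]}$, which is free here.

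\textbf{The missing idea: contraction is a localization.} This is how Cuntz and K\"uhne proceed, and the paper reproduces it in Lemma~\ref{lemma: contraction of building set} and Proposition~\ref{Proposition: underlying connected subgraph arrangment is free}. For an edge $e$, the flat $X=H_e\cap\bigcap_{k\notin e}H_{\{k\}}$ lies in $H_I$ exactly when $I\cap e\in\{\emptyset,e\}$, and $(\CA_G)_X\cong\CA_{G/e}$. More generally, take a partition of $J\subseteq N$ into blocks $P$ with each $G[P]$ connected. Localizing at $\bigcap_P H_P$ gives the arrangement of $G[J]$ with every block contracted.

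With contractions and induced subgraphs together, the obstruction set becomes finite, essentially the graphs $\CK_1,\dots,\CK_8$ of Figure~\ref{Figure: G1 to G8}:
\begin{itemize}
\item a cycle of length $\ge 4$ with a pendant contracts to $\CK_6$;
\item two triangles joined by a path contract to $\CK_5$;
\item two branch vertices of a tree merge into a degree-$4$ vertex, giving $\CK_3$;
\item two cycles sharing an edge contract to $\CK_1$ or $\CK_2$.
\end{itemize}
Only these need individual non-freeness arguments: generic localizations plus Theorem~\ref{theorem: Yoshinaga generic not free} for $\CK_3,\dots,\CK_8$, and a direct check for $\CK_1,\CK_2$.

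\textbf{Sufficiency.} Your sufficiency part is also only a plan; the paper itself simply cites Cuntz--K\"uhne for it. The claim that each restriction $\CA''$ along your addition sequence lies again in one of the families is unchecked. Note that no arrangement strictly between $\CA_{P_n}$ and $\CA_{C_n}$ is a connected subgraph arrangement. A cleaner route for cycles: in your coordinates $y_k$, $\CA_{C_n}$ becomes the cone over the Shi arrangement of type $A_{n-1}$, which is known to be free with exponents $(1,n,\dots,n)$.
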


See Figure \ref{fig:freeness_graphs} for example graphs. 

	\begin{figure}[H]
			\includegraphics[width=0.7
			\textwidth]{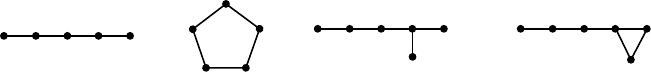} 
			\caption{Small example graphs of free arrangements} 
			\label{fig:freeness_graphs}
		\end{figure}

\noindent They showed that every other connected subgraph arrangement has a non-free localization and therefore fails to be free. In this classification, the following graphs played a key role. In \cite{cuntzkuehne:subgrapharrangements} they were named $G_1$ to $G_8$, thus to avoid confusion with our hypergraph notation, we denote them as $\CK_1$ to $\CK_8$.

\begin{figure}[H]
	\centering
\resizebox{\textwidth}{!}{
\begin{tikzpicture}[scale=.7,auto=left,every node/.style={circle,fill=white!10}]
  \node (a1) at (1,5) {1};
  \node (a2) at (1,1)  {2};
  \node (a3) at (5,1)  {3};
  \node (a4) at (5,5)  {4};
  \foreach \from/\to in {a1/a2,a2/a3,a3/a4,a1/a4,a1/a3}
    \draw (\from) -- (\to);

  \node (b1) at (6,5) {1};
  \node (b2) at (6,1)  {2};
  \node (b3) at (10,1)  {3};
  \node (b4) at (10,5)  {4};
  \foreach \from/\to in {b1/b2,b2/b3,b3/b4,b1/b4,b1/b3,b2/b4}
    \draw (\from) -- (\to);

  \node (c1) at (13,3) {1};
  \node (c2) at (11,5) {2};
  \node (c3) at (11,1) {3};
  \node (c4) at (15,1) {4};
  \node (c5) at (15,5) {5};
    
  \foreach \from/\to in {c1/c2,c1/c3,c1/c4,c1/c5}
    \draw (\from) -- (\to);

  \node (d1) at (18,3) {1};
  \node (d2) at (16,5) {2};
  \node (d3) at (16,1) {3};
  \node (d4) at (20,1) {4};
  \node (d5) at (20,5) {5};  
  
  \foreach \from/\to in {d1/d2,d1/d3,d1/d4,d1/d5,d2/d3}
    \draw (\from) -- (\to);
\end{tikzpicture}

\begin{tikzpicture}[scale=.7,auto=left,every node/.style={circle,fill=white!10}]\label{Graphs: 8 not free graphs}
  
  \node (a1) at (3,3) {1};
  \node (a2) at (1,5) {2};
  \node (a3) at (1,1) {3};
  \node (a4) at (5,1) {4};
  \node (a5) at (5,5) {5};  
  
  \foreach \from/\to in {a1/a2,a1/a3,a1/a4,a1/a5,a2/a3,a4/a5}
    \draw (\from) -- (\to);

  \node (b1) at (8,3) {5};
  \node (b2) at (6,5) {1};
  \node (b3) at (6,1) {2};
  \node (b4) at (10,1) {3};
  \node (b5) at (10,5) {4};  
  
  \foreach \from/\to in {b1/b2,b2/b3,b3/b4,b4/b5,b2/b5}
    \draw (\from) -- (\to);  

  \node (c1) at (11,1) {4};
  \node (c2) at (12,2) {1};
  \node (c3) at (13,3.5) {2};
  \node (c4) at (13,5) {5};
  \node (c5) at (14,2) {3};
  \node (c6) at (15,1) {6};
   
  \foreach \from/\to in {c1/c2,c2/c3,c3/c5,c3/c4,c2/c5,c5/c6}
    \draw (\from) -- (\to);  

  \node (d3) at (16,1) {3};
  \node (d2) at (17,1.75) {2};
  \node (d4) at (18,3.75) {4};
  \node (d5) at (18,5) {5};
  \node (d6) at (19,1.75) {6};
  \node (d7) at (20,1) {7};
  \node (d1) at (18,2.5) {1};
   
  \foreach \from/\to in {d1/d2,d2/d3,d1/d4,d4/d5,d1/d6,d6/d7}
    \draw (\from) -- (\to);   
\end{tikzpicture}} \caption{The graphs $\CK_1$ up to $\CK_8$.}\label{Figure: G1 to G8}
\end{figure}

\section{Building sets and equivalence of hypergraphs} \label{sec:building sets}

This section connects a subclass of building sets to the connected hypersubgraph arrangements. This is motivated by the fact that two distinct hypergraphs do not necessarily give distinct connected hypersubgraph arrangements. 

\begin{example} \label{ex:build}
    Let $G_1=([3],\{\{1,2\},\{2,3\},\{1,2,3\}\}), G_2=([3],\{\{1,2\},\{2,3\}\})$. Then 
    $$\CA_{G_1}=\CA_{G_2}=\{H_{1},H_{2},H_{3},H_{12},H_{23},H_{123}\}.$$
\end{example}

We can solve this problem by first associating Boolean building sets to hypergraphs:  

\begin{definition}\cite[Definition~3.1]{feichtnersturmfels:matroidpolybergmanfans} Let $\mathcal{L}$ be a finite lattice. A subset $\mathcal{G}$ of $\mathcal{L}>\hat{0}$ is a building set if for any
$X \in \mathcal{L}>\hat{0}$ and $\max \mathcal{G}_{\leq X} = \{g_1 , \dots, g_k \}$ there is an isomorphism of partially ordered sets
$$\varphi_X :\Pi_{j=1}^{k} [\hat{0},g_j]\to[0,\hat{X}],$$
where the j-th component of the map $\varphi_X$ is the inclusion of intervals $[\hat{0},g_j] \subset [\hat{0}, X]$
in $\mathcal{L}$.    
\end{definition}

If $\mathcal{L}$ is a Boolean lattice, then Lemma \ref{lem:booleanbuilding sets} can be used to classify building sets instead.

\begin{lemma}\label{lem: intersection_boolean_sets}
    The intersection of two building sets in the Boolean lattice is again a building set in the Boolean lattice.
\end{lemma}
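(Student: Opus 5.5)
The plan is to argue directly from the characterization in Lemma \ref{lem:booleanbuilding sets}, so that no lattice-theoretic isomorphisms need to be checked. Let $\mathcal{F}_1$ and $\mathcal{F}_2$ be building sets in the Boolean lattice $2^{[n]}$ and put $\mathcal{F} = \mathcal{F}_1 \cap \mathcal{F}_2$. By Lemma \ref{lem:booleanbuilding sets} it suffices to verify two conditions for $\mathcal{F}$: that it contains every singleton, and that it is closed under unions of intersecting members.

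For the singletons: since each $\mathcal{F}_j$ is a building set, Lemma \ref{lem:booleanbuilding sets} gives $\{i\} \in \mathcal{F}_j$ for all $i \in [n]$ and $j = 1,2$. Hence $\{i\} \in \mathcal{F}$.

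For the union condition, take $F, F' \in \mathcal{F}$ with $F \cap F' \neq \emptyset$. Then $F, F'$ lie in $\mathcal{F}_1$ and intersect nontrivially, so Lemma \ref{lem:booleanbuilding sets} applied to $\mathcal{F}_1$ gives $F \cup F' \in \mathcal{F}_1$. The same argument applied to $\mathcal{F}_2$ gives $F \cup F' \in \mathcal{F}_2$. Therefore $F \cup F' \in \mathcal{F}$, and $\mathcal{F}$ is a building set by Lemma \ref{lem:booleanbuilding sets}.

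There is no real obstacle here; the only care needed is to apply both directions of the equivalence in Lemma \ref{lem:booleanbuilding sets} to the same ground set $[n]$. The argument extends verbatim to arbitrary (finite) intersections of Boolean building sets, which will presumably be used later to speak of the smallest building set containing a given family.
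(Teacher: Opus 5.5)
Your proof is correct and follows essentially the same route as the paper: it reduces to the characterization in Lemma~\ref{lem:booleanbuilding sets}, notes that singletons lie in both building sets, and observes that the union of two intersecting common members lies in each building set and hence in their intersection. Your closing remark that the argument extends to arbitrary finite intersections is also accurate.
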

\begin{proof}
    Let $B_1, B_2$ be building sets on the ground set $G=[n]$. \\
    Since $B_1$ and $B_2$ are building sets all singletons $\{i\}$ for an arbitrary $i\in G$ are contained in the intersection $B_1\cap B_2$. Now let $b_1,b_2\in B_1\cap B_2$ with $b_1\cap b_2\neq\emptyset$. Since $B_1, B_2$ are building sets, we have that  $b_1\cup b_2 \in  B_i, i=1,2$. This shows that $B_1\cap B_2$ is a building set on the ground set $G$.
\end{proof}

We prove Lemma \ref{lem:hypergrahs and building sets}. 
\begin{proof}[Proof of Lemma \ref{lem:hypergrahs and building sets}]
    From Remark \ref{remark: connected hypersubgraph arrangement remarks} (1), we know that all singletons are contained in $\mathcal{F}_{\mathcal{A}_G}$. Now take $F, F' \in \mathcal{F}_{\mathcal{A}_G}$ with $F \cap F'\neq \emptyset$. Take two vertices $u,v \in G[F\cup F']$ and select a vertex $w \in F \cap F'$. By definition of $\mathcal{A}_G$, there are paths $ue_1v_2e_2\dots e_kw$ and $we_{k+1}v_{k+2}e_{k+2}\dots e_sv$ in $G$. We get a path from $u$ to $v$ by concatenating these and thus $G[F\cup F']$ is connected, so $F\cup F' \in \mathcal{F}_{\mathcal{A}_G}$.
\end{proof}
Note that if the hypergraph $G$ on $n$ vertices is connected (see Section \ref{sec:hypersubgraphs}), the building set $\mathcal{F}_{\CA_G}$ is also connected, i.e., $[n] \in \mathcal{F}_G$. 

Feichtner and Sturmfels introduced the notion of building closure. 
\begin{definition}\cite[Lemma 3.10]{feichtnersturmfels:matroidpolybergmanfans}
    Let $N=[n]$ and $E\subseteq 2^N$. We denote by $\hat{\mathcal{F}}(E)$ the unique minimal building set generated by $E$, i.e., the set $\mathcal{F}_E$ of minimal cardinality such that $E \subseteq \mathcal{F}_E$ and $\mathcal{F}_E$ is a building set. Call $\hat{\mathcal{F}}(E)$ the \emph{building closure} of $E$. 
\end{definition}

\begin{lemma}
    Let $G = (N, E)$ be a hypergraph. Then $\mathcal{F}_{\A_G} = \hat{\mathcal{F}}(E)$. 
\end{lemma}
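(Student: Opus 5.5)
We show the two inclusions $\hat{\mathcal{F}}(E)\subseteq \mathcal{F}_{\A_G}$ and $\mathcal{F}_{\A_G}\subseteq \hat{\mathcal{F}}(E)$. We take $\hat{\mathcal{F}}(E)$ to also contain the singletons, as every building set does by Lemma \ref{lem:booleanbuilding sets}. Then $\hat{\mathcal{F}}(E)$ is the intersection of all Boolean building sets containing $E$. This intersection is itself a building set by Lemma \ref{lem: intersection_boolean_sets}, so it is the unique minimal one.

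\emph{First inclusion.} By Lemma \ref{lem:hypergrahs and building sets}, $\mathcal{F}_{\A_G}$ is a building set. It contains $E$, because for each edge $e\in E$ the induced subhypergraph $G[e]$ contains the edge $e$ and is therefore connected. Minimality of the building closure then gives $\hat{\mathcal{F}}(E)\subseteq \mathcal{F}_{\A_G}$.

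\emph{Second inclusion.} Let $I\in\mathcal{F}_{\A_G}$, so $G[I]$ is connected. If $|I|=1$, then $I$ is a singleton and lies in $\hat{\mathcal{F}}(E)$. Otherwise, since $G[I]$ is connected with at least two vertices, every vertex of $I$ lies in some edge of $G[I]$. Here the edges of $G[I]$ are exactly the edges $e\in E$ with $e\subseteq I$. Connectivity lets us order these edges as $e_1,\dots,e_m$ so that each $e_j$ meets $e_1\cup\dots\cup e_{j-1}$: pick $e_1$ arbitrarily, and at each step choose an unused edge meeting the current union. Such an edge exists while the union is a proper subset of $I$, or while edges remain. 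If none existed, the current union and the rest of $I$ would be disconnected in $G[I]$.

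We now argue by induction on $j$ that $U_j:=e_1\cup\dots\cup e_j\in\hat{\mathcal{F}}(E)$. Suppose $U_{j-1}\in\hat{\mathcal{F}}(E)$. Then $e_j\in E\subseteq \hat{\mathcal{F}}(E)$ and $U_{j-1}\cap e_j\neq\emptyset$, so the union condition of Lemma \ref{lem:booleanbuilding sets} gives $U_j\in\hat{\mathcal{F}}(E)$. Finally, $U_m=I$ because every vertex of $I$ lies in an edge of $G[I]$. Hence $I\in\hat{\mathcal{F}}(E)$.

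The only delicate step is this ordering argument. It requires fixing the meaning of connectivity of $G[I]$ as path-connectivity via edges contained in $I$, as used in the proof of Lemma \ref{lem:hypergrahs and building sets}.
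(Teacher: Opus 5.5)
Your proof is correct and takes the same route as the paper. The inclusion $\hat{\mathcal{F}}(E)\subseteq\mathcal{F}_{\A_G}$ follows from $E\subseteq\mathcal{F}_{\A_G}$, Lemma \ref{lem:hypergrahs and building sets} and minimality. The reverse inclusion comes from writing a connected $I$ as an iterated union of overlapping edges and applying the union condition of Lemma \ref{lem:booleanbuilding sets}. Your version is more careful than the paper's in three places: you treat singletons separately, you justify uniqueness of the closure via Lemma \ref{lem: intersection_boolean_sets}, and you actually construct the edge ordering (each edge meeting the union of its predecessors) where the paper simply asserts a chain with consecutive overlaps.
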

\begin{proof} 
The inclusion $\mathcal{F}_{\A_G} \supseteq \hat{\mathcal{F}}(E)$ is clear, since $E \subseteq \mathcal{F}_{\A_G}$. 
Let $I \in \mathcal{F}_{\A_G}$, but not contained in $\hat{\mathcal{F}}(E)$, then one of the following holds:
\begin{enumerate} 
\item  $I \in E(G)$.
\item  There exist $e_1,e_2,...,e_k \in E(G)$ s.t. $(\cup_{1\leq i\leq k} e_i)=I$ and for all $1\leq i < k$ it holds that $e_i \cap e_{i+1} \neq \emptyset$.

\end{enumerate} 
 If (1), then $I\in \hat{\mathcal{F}}(E)$ since $E(G)\subseteq \hat{\mathcal{F}}(E)$. If (2), then $I\in \hat{\mathcal{F}}(E)$ by Lemma 1.2. Since $I$ was arbitrary, this holds for any $I\in \mathcal{F}_{\A_G}$, so $\mathcal{F}_{\A_G}\subseteq \hat{\mathcal{F}}(E)$ and finally $\mathcal{F}_{\A_G}=\hat{\mathcal{F}}(E)$. 
\end{proof}
\begin{definition}
    Say two hypergraphs $G_1, G_2$ are equivalent if $\mathcal{F}_{\mathcal{A}_{G_1}} = \mathcal{F}_{\mathcal{A}_{G_2}}$. For each equivalence class, choose as representative a hypergraph $G$ with $E(G) = \mathcal{F}_{\A_G}$, i.e., the hypergraph with edge set precisely the sets of the building set of the arrangement. 
\end{definition}

\begin{remark}
	\begin{enumerate}
	\item The idea of considering graphs with respect to a special edge set is similar to the concept of atomic, saturated, connected (ASC) graphs introduced by Do\v{s}en and Petri\'c in \cite{dosenpetric}, as their underlying edge set is also a building set in the Boolean lattice. 
	\item The relation of hypergraphs and building sets was used by Jeli\'c, Jevti\'c and \v{Z}ivaljevi\'c in their study of the connection of cyclohedra (a special type of nestohedron coming from the building set associated with the path graph, see Section \ref{subsec:nesto} for details on this construction) to Kantorovich-Rubinstein polytopes \cite{jevticjeliczivaljevic}. They also use the linear forms we define our hyperplanes with and establish restriction and deletion for building sets.  
    \item Even though the building set $\hat{\mathcal{F}}(E)$ is minimal among building sets containing $E$, the representative we chose for each class does not correspond to the hypergraph with minimal edge set in this class. In Example \ref{ex:build}, the representative of the equivalence class shared by the two hypergraphs $G_1, G_2$ is $G_1$, the larger of the two. In the following, we will write $\mathcal{A}_{\mathcal{F}}$ for such an arrangement, with the understanding that we can always interpret the building set as a set of hyperedges and thus return to the graph theoretical interpretation. 
    \end{enumerate}
\end{remark}

The next section makes a connection between the nested complex of a building set and the intersection lattice of the corresponding arrangement. 
\begin{definition}\cite[Definition~7.3]{postnikov:permutohedra} \label{def:nest_set}
Let $\mathcal{F}$ be a building set. A \emph{nested set} $\mathfrak{n}$ in $\mathcal{F}$ is a collection of non-empty subsets of $\mathcal{F}$ such that
    \begin{itemize}
        \item for any $n_1,n_2\in \mathfrak{n}$, we have either $n_1\subseteq n_2, n_1\supseteq n_2$, or $n_1\cap n_2=\emptyset$.
        \item For any collection of $k \geq 2$ disjoint subsets $n_1,n_2,\dots, n_k \in \mathfrak{n}$, their union $n_1\cup n_2\cup\dots \cup n_k$ is not in $\mathcal{F}$.
        \item $\mathfrak{n}$ contains all inclusion-wise maximal elements of $\mathcal{F}$.
    \end{itemize}

\begin{remark}\label{rem:max_set} 
    We only consider irreducible connected hypersubgraph arrangements, so the building sets are connected (see Lemma \ref{lemma: G not in B then AB reducible}). Thus, there is only one inclusion-wise maximal element, and it is the vertex set of the hypergraph. In this case, the third condition imposed on nested sets is just that this element is contained in all of them. 
\end{remark}
\end{definition}
The \emph{nested complex} $\mathcal{N}(\mathcal{F})$ is the poset of all nested sets of $\mathcal{F}$, ordered by inclusion.

\subsection{Rank of intersections for nested sets}  
We examine the rank of the intersections of hyperplanes defined by nested sets. 

\begin{lemma} \label{lem:nest_set}
    Let $\mathcal{F}_{\mathcal{A}_G}$ be a building set of a connected hypersubgraph arrangement and $\mathfrak{n}$ a nested set of $\mathcal{F}_{\mathcal{A}_G}$ with $\vert \mathfrak{n} \vert = k$. Then the intersection \[X_{\mathfrak{n}} = \bigcap_{I \in \mathfrak{n}} H_I\] has rank $k$ in $L(\mathcal{A}_G)$, i.e., full rank. 
\end{lemma}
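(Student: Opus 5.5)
The rank of $X_{\mathfrak{n}}$ equals the dimension of the span of the normal vectors $\mathbbm{1}_I=\sum_{i\in I}e_i$, $I\in\mathfrak{n}$, in $V^*$. So it suffices to show that these $k$ vectors are linearly independent. The plan is to use only the laminar (first) condition of a nested set. The second and third conditions are not needed for independence; they only guarantee that $\mathfrak{n}$ consists of sets in $\mathcal{F}_{\mathcal{A}_G}$, so each $H_I$ lies in $\mathcal{A}_G$.

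The argument is by induction on $k=\vert\mathfrak{n}\vert$, and the case $k=1$ is trivial. For $k\geq 2$, take an inclusion-wise minimal element $m\in\mathfrak{n}$ and set $\mathfrak{n}'=\mathfrak{n}\setminus\{m\}$. The collection $\mathfrak{n}'$ is still laminar, so the induction hypothesis applies to it (stated for laminar families). It then suffices to show that $\mathbbm{1}_m$ is not in the span of $\{\mathbbm{1}_I : I\in\mathfrak{n}'\}$.

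For this, fix a vertex $v\in m$ (possible since $m\neq\emptyset$) and consider a relation $\sum_{I\in\mathfrak{n}}c_I\mathbbm{1}_I=0$. By laminarity, every $I\neq m$ with $I\cap m\neq\emptyset$ strictly contains $m$, because minimality of $m$ rules out $I\subsetneq m$. So every such $I$ contains all of $m$, and all coordinates indexed by elements of $m$ see the same coefficients. That alone does not isolate $c_m$, so I will order differently.

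A cleaner route is to order $\mathfrak{n}$ by a linear extension of inclusion, $I_1,\dots,I_k$, with smaller sets first. Then a top-down argument on the forest structure works. Take a maximal element $M$ and its maximal proper subsets $J_1,\dots,J_r$ in $\mathfrak{n}$, which are pairwise disjoint by laminarity. Because $\mathfrak{n}$ is a nested set, $J_1\cup\dots\cup J_r\notin\mathcal{F}$ when $r\geq2$, while $M\in\mathcal{F}$. Hence $M\neq\bigcup J_j$ for $r\geq 2$, and for $r=1$ we also have $M\neq J_1$ since the sets are distinct. So there is a vertex $w\in M\setminus\bigcup_j J_j$. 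The only sets of $\mathfrak{n}$ containing $w$ are $M$ and possibly sets disjoint from the subtree below $M$; by laminarity and maximality of $M$, that means $M$ alone. Evaluating the relation at coordinate $w$ gives $c_M=0$. Removing $M$ leaves a nested family of the same kind, now with several maximal elements, and the induction proceeds.

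The main obstacle is this "private vertex" step. Laminarity alone does not give independence: for example, $\{1\},\{2\},\{1,2\}$ is dependent. The second axiom of nested sets is exactly what is needed. When $r\geq 2$, it guarantees that a parent is never the disjoint union of its children in the forest. After removing $M$, the remaining family is laminar, and the no-union condition still holds for each remaining node relative to its own children. So the induction hypothesis should be phrased for laminar families in which no node equals the union of its children. Then every node has a private vertex, and evaluating coordinates from the top down kills all coefficients, giving rank $k$.
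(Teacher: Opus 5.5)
Your final top-down argument is correct and is essentially the paper's proof. The paper orders $\mathfrak{n}$ by increasing cardinality and uses laminarity plus the second nested-set axiom to find in each $I_j$ an element lying in no member of $\mathfrak{n}$ properly contained in $I_j$, which is exactly your private vertex $w\in M\setminus\bigcup_j J_j$; its unitriangular submatrix is your coordinate-by-coordinate elimination read from the top. You should delete the opening claim that only laminarity is needed, together with the abandoned minimal-element induction, since your own example $\{1\},\{2\},\{1,2\}$ shows the second axiom is essential, and your final argument does use it.
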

\begin{proof}
Order the elements $I_1,\dots, I_k$ of $\mathfrak{n}$ by increasing cardinality. We want to show that there is an ordering on the sets such that each $I_j$ contains an element not in $\bigcup_{i = 1}^{j-1} I_i$. Assume that at some point $j$ in the sequence, we have $I_j \subseteq \bigcup_{i = 1}^{j-1} I_i$.

Because of the ordering and the nested set property, we have that all $I_i, i < j$ with non-trivial intersection are contained in $I_j$, thus $I_j = \bigcup_{\substack{i < j \\ I_i \cap I_j \not= \emptyset }} I_i$. If two of these sets have a non-trivial intersection, then one already is contained in the other. This implies that the union $\bigcup_{\substack{i < j \\ I_i \cap I_j \not= \emptyset }} I_i$, i.e. $I_j$, can be written as a union of disjoint sets, which contradicts the nested set property. 

Now consider the submatrix of the defining matrix of $X_\mathfrak{n}$ consisting of the rows and columns corresponding to the indices $i_j, 1\leq j \leq k$. With some row and column operations, this becomes an upper triangular matrix with ones on the diagonal, thus it must have rank $k$. 
\end{proof}

\noindent
So each nested set corresponds to an intersection of full rank, but this is not uniquely determined.

\begin{example}
   Consider the arrangement $\mathcal{A}_{P_4}$. 
   
   Then, the nested sets $\{\{1\},\{1,2\}, \{1,2,3,4\}\} \text{ and } \{\{2\},\{1,2\}, \{1,2,3,4\}\}$ define the same intersection of hyperplanes. 
\end{example}
Define an operation on nested sets in the following way: 
\begin{definition}
    Let $\mathcal{F}$ be a building set. Call a set $C \subset \mathcal{F}, \vert C\vert = k$ a \emph{$k$-circuit} of $\mathcal{F}$, if there exists
    an element $b_C \in C$ such that $\bigsqcup_{I \in C\setminus b_c} I = b_C$.
    
Call $\vert b_C \vert$ the size of $C$.
We call a subset $B_C\subsetneq C$ with cardinality $\vert C\vert -1$ 
    an \emph{almost-$k$-circuit}.  
\end{definition}
In \cite{cuntzkuehne:subgrapharrangements}, the case for $k = 3$ was discussed under the name \emph{circuit-triple}. 
By definition, the nested sets of a building set cannot contain any circuits. 
\begin{lemma} \label{lem:elmt_exchange}
    Let $\mathfrak{n}$ be a nested set of a building set $\mathcal{F}$ that contains an almost-circuit $B_C$. Define the element $\overline{c} := C \setminus B_C$. Then for any non-maximal element $c \in B_C$, there exists an element $c' \in \mathcal{F}$ such that $\overline{c} \subseteq c'\subseteq b_C$ and the set $\mathfrak{n}\setminus \{c\} \cup  \{c'\}$ is again a nested set. 
\end{lemma}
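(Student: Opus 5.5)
We first pin down what the hypotheses mean. Since $C$ is a circuit, its non-top elements are pairwise disjoint and their union is $b_C$. The nested set $\mathfrak{n}$ cannot contain all of $C$.

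Suppose the missing element $\overline{c}$ were $b_C$. Then $B_C$ would be at least two pairwise disjoint elements of $\mathfrak{n}$ whose union $b_C$ lies in $\mathcal{F}$, which the second nested-set axiom forbids. Hence $\overline{c}\neq b_C$, so $b_C\in\mathfrak{n}$ and $\overline{c}$ is one of the disjoint parts. The non-maximal element $c\in B_C$ is then another part, disjoint from $\overline{c}$. From the first nested-set axiom, every $x\in\mathfrak{n}$ either contains $b_C$, is disjoint from $b_C$, or is properly contained in $b_C$. In the last case, $x$ is comparable with or disjoint from every part in $B_C$.

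\textbf{Construction.} Let $M_1,\dots,M_r$ be the inclusion-maximal elements of $\mathfrak{n}\setminus\{c\}$ that are properly contained in $b_C$; they are pairwise disjoint. Set
\[c' := \overline{c}\cup \bigcup_{M_i\cap \overline{c}\neq\emptyset} M_i .\]
Then $c'\in\mathcal{F}$ by the union property of Lemma \ref{lem:booleanbuilding sets}, since $\overline{c}\in\mathcal{F}$ and each such $M_i$ meets $\overline{c}$. Clearly $\overline{c}\subseteq c'\subseteq b_C$.

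\textbf{First axiom for $\mathfrak{n}' := \mathfrak{n}\setminus\{c\}\cup\{c'\}$.} Take $x\in\mathfrak{n}\setminus\{c\}$ and check it against $c'$:
\begin{itemize}
\item If $x\supseteq b_C$ or $x\cap b_C=\emptyset$, the comparison with $c'$ is immediate.
\item Otherwise $x\subseteq M_i$ for some $i$. If $M_i$ meets $\overline{c}$, then $x\subseteq c'$. If not, $M_i$ is disjoint from $\overline{c}$ and from the other $M_j$, hence from $c'$.
\end{itemize}
Elements of $\mathfrak{n}$ lying inside $c$ need no separate treatment. Either they sit inside some $M_i$ (when $c$ is not among the maximal ones), or they are disjoint from $c'$ because $c$ is disjoint from $\overline{c}$ and from the other $M_j$. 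The third axiom is unaffected, since $c$ was not maximal.

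\textbf{Second axiom (main obstacle).} Suppose pairwise disjoint $y_1,\dots,y_m\in\mathfrak{n}'$ with $m\ge 2$ have union $U\in\mathcal{F}$. If $c'$ is not among them, this contradicts nestedness of $\mathfrak{n}$. So let $y_1=c'$.

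My plan is to convert this into a forbidden disjoint union inside $\mathfrak{n}$ itself. Because $c'$ contains $\overline{c}$ and $C$ is a circuit, I would replace $c'$ by $c$ together with suitable elements of $\mathfrak{n}$. Concretely, I would consider $U\cup b_C$, which lies in $\mathcal{F}$ since it meets $U$ in $\overline{c}$. Rewriting it as a union of disjoint elements of $\mathfrak{n}$ would use $b_C$ and the $y_j$ disjoint from $b_C$, or, if $U\subseteq b_C$, the parts of $B_C$ (including $c$) and the $M_i$.

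The delicate case is $U\subsetneq b_C$. Here I would use the maximality of the $M_i$ and the fact that $\overline{c}\notin\mathfrak{n}$. If this choice of $c'$ fails, the fallback is to take $c'$ to be an inclusion-minimal element of $\mathcal{F}$ containing $\overline{c}$ that is compatible with $\mathfrak{n}\setminus\{c\}$; its existence is witnessed by $b_C$.
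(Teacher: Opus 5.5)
\textbf{Your construction of $c'$ fails the second axiom.} Your $c'$ absorbs only those elements of $\mathfrak{n}\setminus\{c\}$ that \emph{meet} $\overline{c}$. That is exactly what the first axiom requires, and your verification of it is fine. The second axiom, however, is threatened by elements of $\mathfrak{n}\setminus\{c\}$ that are \emph{disjoint} from $\overline{c}$ but whose disjoint union with $\overline{c}$ lies in $\mathcal{F}$, i.e.\ almost-circuits that $\overline{c}$ would complete. Your recipe ignores these.

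Here is a concrete failure. Take the building set of $P_3$, $\mathcal{F}=\{\{1\},\{2\},\{3\},\{1,2\},\{2,3\},\{1,2,3\}\}$, the nested set $\mathfrak{n}=\{\{2\},\{2,3\},\{1,2,3\}\}$, the circuit $C=\{\{1\},\{2,3\},\{1,2,3\}\}$ (so $\overline{c}=\{1\}$), and $c=\{2,3\}$. The only $M_i$ is $\{2\}$, which misses $\overline{c}$, so you get $c'=\{1\}$. Then $\mathfrak{n}'=\{\{1\},\{2\},\{1,2,3\}\}$ is not nested, since $\{1\}\sqcup\{2\}=\{1,2\}\in\mathcal{F}$. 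This is your ``delicate case'' $U\subsetneq b_C$, and it is a genuine failure rather than a missing verification. Here $U\cup b_C=b_C$, so your rewriting idea has nothing to work with.

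The paper's proof is built around exactly this obstruction. It starts from $\overline{c}$, and if $\overline{c}$ completes a circuit with elements of $\mathfrak{n}$, it passes to $c'=b_{C'}$ for the largest circuit $C'$ with $b_{C'}\subsetneq b_C$, $\overline{c}\in C'$, $c\notin C'$. In this example that gives $c'=\{1,2\}$ and the nested chain $\{\{2\},\{1,2\},\{1,2,3\}\}$.

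\textbf{The fallback, and what is actually true.} Your fallback does prove the statement as literally written, but for a trivial reason you should state outright: $c'=b_C$ always works. Indeed $b_C\in\mathfrak{n}$, and $\mathfrak{n}\setminus\{c\}$ is nested, because the first two axioms pass to subsets and $c$ is not maximal. That makes your whole construction unnecessary. But it yields a nested set of size $|\mathfrak{n}|-1$, which by Lemma~\ref{lem:nest_set} cuts out an intersection of smaller rank. The Proposition following the lemma needs an exchange with $c'\notin\mathfrak{n}$.

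Be aware that this stronger, cardinality-preserving version is false, so no refinement of your argument (or of the paper's) can deliver it. In the same $\mathcal{F}$ take $\mathfrak{n}=\{\{3\},\{2,3\},\{1,2,3\}\}$, $C=\{\{1,2\},\{3\},\{1,2,3\}\}$, $\overline{c}=\{1,2\}$ and $c=\{3\}$. The only candidates are $c'=\{1,2\}$, which overlaps $\{2,3\}\in\mathfrak{n}$ without containment, and $c'=b_C$. The paper's recipe returns $\{1,2\}$ here. Its check of the first axiom overlooks elements of $\mathfrak{n}$, like $\{2,3\}$, that meet $\overline{c}$ while containing another part of $C$ entirely.
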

\begin{proof}
    We want to switch out the element $c$ for $\overline{c}$, while preserving the nested set condition. If there is another almost-circuit $B_{C'}$ that would be completed by $\overline{c}$, then $\overline{c} \not= b_{C'}$. We know that $b_C \cap b_{C'} \not= \emptyset$ (because the intersection contains $\overline{c}$), and thus a containment relation must hold. We conclude that $b_{C'} \subsetneq b_C$; otherwise, by the first condition of nested sets, there would already be a set of sets in $\mathfrak{n}$ whose disjoint union would yield $b_{C'}$ (because $b_C \in \mathfrak{n}$).
    
    Let $C'$ be the size-wise largest circuit in $\mathcal{F}$ such that $b_{C'} \subsetneq b_C$, $\overline{c} \in C'$, and $ c \notin C'$. If $C'$ exists, then define $c' = b_{C'}$ ($b_{C'}$ is not yet in $\mathfrak{n}$, since otherwise there would be a complete circuit contained in $\mathfrak{n}$), if it does not exist, define $c' = \overline{c}$. 
    We need to check the conditions of Definition \ref{def:nest_set} for this new set. The third condition follows immediately from the choice of $c$. Removing $c$ does not affect the first two conditions. We need to check that they still hold after adding $c'$.  
    
    \begin{enumerate} 
    \item[(1)] Assume there exists an element $n \in \mathfrak{n}$ such that $c'\cap n \not= \emptyset$, this implies that $n\cap b_C \not= \emptyset$. If $b_C \subseteq n$, then $c'\subseteq n$. If $n \subseteq b_C$, assume that $n \not\subseteq c'$, then there is an element $a \in n \setminus c'$. It immediately follows  that $a\in b_C$ and we conclude that there exists another set $c'' \in C$ for which $n \cap c'' \not= \emptyset$, implying that the condition would have been violated before adding $\overline{c}$. 
    \item[(2)] The second condition is fulfilled by the choice of $c'$: if there is a circuit in $\mathfrak{n}\setminus \{c\} \cup  \{\overline{c}\}$, of which $\overline{c}$ is part, then we chose it as the $b_{C'}$ of the largest such circuit. Assuming there were a circuit with $b_{C'}$ in this setting, this circuit would be comprised of elements other than $\overline{c}$ and hence $\mathfrak{n}$ was never a nested set. 
    \end{enumerate}
\end{proof}
\begin{example}
    Consider the building set associated with $P_3$: $\mathcal{F} = \{\{1\}, \{2\}, \{3\}, \{1,2\}, \{2,3\}, \{1,2,3\}\}$.

    The set $U = \{\{1\}, \{2\}, \{3\}, \{1,2,3\}\}$ is a 4-circuit of size 3. The set $\mathfrak{n} = \{\{1\}, \{3\}, \{1,2,3\}\}$ is a nested set and an almost-circuit. Let $c = \{3\}$, then we have that $\mathfrak{n} \setminus \{\{3\}\} \cup \{\{2\}\}$ is not a nested set, because of the circuit $ \{\{1\}, \{2\}, \{1,2\}\}$ contained in $\mathcal{F}$, therefore we choose $c' = \{1,2\}$ and the set $\{\{1\}, \{1,2\}, \{1,2,3\}\}$ is again a nested set.  
\end{example}

The third condition of the nested set definition restricts the type of hyperplane intersections we can obtain. Define the  \emph{generalized nested complex} as the complex of all sets fulfilling the first two conditions of Definition \ref{def:nest_set}. Note that we did not use the third condition in the proof of Lemma \ref{lem:nest_set}, so it also holds for generalized nested sets. We get a generalized version of Lemma \ref{lem:elmt_exchange} by dropping the assumption that $c$ cannot be maximal.

\begin{proposition}
    If a nested set $M$ of a building set $\mathcal{F}$ is obtained by applying Lemma \ref{lem:elmt_exchange} to a nested set $M'$ of $\mathcal{F}$, then both define the same intersection of hyperplanes in $\A_{\mathcal{F}}$. 
    
\end{proposition}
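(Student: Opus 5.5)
The plan is to show that the two intersections coincide, $X_{M} = X_{M'}$. By construction $M = M'\setminus\{c\}\cup\{c'\}$, where $M'$ contains an almost-circuit $B_C$, $c\in B_C$ is non-maximal, and $\overline{c}\subseteq c'\subseteq b_C$ with $c'\in\mathcal{F}$. By Lemma \ref{lem:nest_set}, both $X_M$ and $X_{M'}$ have rank $\vert M\vert = \vert M'\vert$. It therefore suffices to prove one inclusion of linear spans of normal vectors. I will show that the normal vector $\alpha_{c'} = \sum_{i\in c'} x_i$ lies in the span of $\{\alpha_I \mid I\in M'\}$. Then $X_{M'}\subseteq H_{c'}$, so $X_{M'}\subseteq X_M$, and equal ranks force equality.

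\textbf{The case $c'=\overline{c}$.} Since $C$ is a circuit, the almost-circuit $B_C$ together with $\overline c$ satisfies a linear relation: either $b_C$ is the disjoint union of the other members, or $\overline c = b_C$. The second case is impossible, since then $b_C\notin B_C$ and $B_C$ would be a disjoint family whose union $b_C$ lies in $\mathcal{F}$, contradicting the nested property. Hence $b_C\in B_C$ and $\alpha_{b_C} = \alpha_{\overline c} + \sum_{I\in B_C\setminus\{b_C\}}\alpha_I$. So $\alpha_{\overline c}$ lies in the span of $\alpha_I$, $I\in B_C\subseteq M'$.

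\textbf{The case $c'=b_{C''}$.} Here $C''$ is the largest circuit with $b_{C''}\subsetneq b_C$, $\overline c\in C''$ and $c\notin C''$. Then $\alpha_{c'} = \alpha_{\overline c}+\sum_{J\in C''\setminus\{\overline c, b_{C''}\}}\alpha_J$. The main obstacle is showing that the other members $J$ of $C''$ also have normal vectors in the span of $M'$. My first attempt is to exploit the choice of $C''$ in the proof of Lemma \ref{lem:elmt_exchange}: $C''$ is a circuit in $M'\setminus\{c\}\cup\{\overline c\}$, so its members other than $\overline c$ and $b_{C''}$ lie in $M'$. If needed, I would refine this as follows. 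Each $J$ is contained in $b_C$ and disjoint from $\overline c$. By laminarity of $M'$, each $J$ is either inside some element of $B_C\setminus\{b_C\}$ or a union of such elements together with lower nested elements. I would argue by induction on the size of $b_C$ that every such $J$ is expressible from $M'$ by the same disjoint-union relations.

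\textbf{Conclusion.} In both cases $\alpha_{c'}\in\operatorname{span}\{\alpha_I\mid I\in M'\}$, hence $X_{M'}\subseteq X_M$. Both intersections have rank $\vert M\vert$ by Lemma \ref{lem:nest_set}, so they are equal. The argument is symmetric in spirit: one could equally show that $\alpha_c$ lies in the span of $M$. The proof only uses the dimension count, so one inclusion suffices.
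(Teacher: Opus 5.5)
Your first attempt is the paper's argument. You get $\alpha_{\overline{c}}$ from the relation of $C$; you rightly exclude $\overline{c}=b_C$, where the paper only hedges. You get $\alpha_{c'}$ from the relation of $C''$, whose other parts are rows of $M'$. The paper then recovers $\alpha_c$ from the rows of $M$, whereas you close with the rank count of Lemma~\ref{lem:nest_set}.

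That rank count needs $c'\notin M'$, which you use without proof. Suppose $b_{C''}\in M'$. Laminarity makes $b_{C''}\setminus\overline{c}$ a union of members of $B_C\setminus\{b_C\}$. Since $b_{C''}\neq b_C$, the set $b_{C''}$ together with the remaining members is a disjoint family in $M'$ with union $b_C\in\mathcal F$, contradicting nestedness. You can also skip ranks altogether: the two relations give $\alpha_{c'}\equiv\pm\alpha_c$ modulo the span of $\{\alpha_I\mid I\in M'\setminus\{c\}\}$, so the spans agree directly.

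Drop the fallback, though. That the parts of $C''$ other than $\overline{c}$ lie in $M'\setminus\{c\}$ does not follow from the literal definition in Lemma~\ref{lem:elmt_exchange} (``largest circuit in $\mathcal F$ with \dots''). It is the reading in which $C''$ witnesses that $M'\setminus\{c\}\cup\{\overline{c}\}$ fails to be nested. This is how step (2) of that proof uses it, and it is tacit in the paper's proof of the proposition. Under the literal reading no induction can succeed, because the proposition itself fails there.

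Here is a counterexample under the literal reading.
\begin{itemize}
\item Take $\mathcal F=\{\{1\},\dots,\{6\},\{1,2\},\{3,4\},\{5,6\},\{1,3,4,5,6\},[6]\}$, which is a building set.
\item Take $M'=\{\{1,2\},\{3,4\},[6]\}$; it is nested since $\{1,2,3,4\}\notin\mathcal F$.
\item Take $C=\{\{1,2\},\{3,4\},\{5,6\},[6]\}$, so $\overline{c}=\{5,6\}$, and take $c=\{1,2\}$.
\item The largest circuit with $\overline{c}\in C''$, $c\notin C''$ and $b_{C''}\subsetneq[6]$ has $b_{C''}=\{1,3,4,5,6\}$, e.g.\ $\{\{1\},\{3,4\},\{5,6\},\{1,3,4,5,6\}\}$.
\item Then $M=\{\{3,4\},\{1,3,4,5,6\},[6]\}$ is nested.
\item Yet $x_2=\alpha_{[6]}-\alpha_{\{1,3,4,5,6\}}$ vanishes on $X_M$ but not on $X_{M'}$. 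The part $J=\{1\}$ is not expressible from $M'$.
\end{itemize}

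So state the restricted reading explicitly; in this example it yields $c'=\overline{c}$. With that reading fixed, your first attempt is a complete proof.
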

\begin{proof}
The intersections of hyperplanes $H$ are the kernels of matrices with $\alpha_H$ as row vectors. To prove that two intersections are the same is to prove that the two matrices have the same kernel. 
Consider an application of Lemma \ref{lem:elmt_exchange} to a nested set with an almost-circuit. In the notation of the lemma, adding the rows corresponding to elements in $B_C\setminus \{b_c\}$ and the row corresponding to $b_c$ (if contained in $B_C$) with different sign yields the row vector corresponding to $\overline{c}$ up to sign. Adding this row and deleting another corresponding to some element $c \in B_C$ would only change the kernel of this matrix if there is another row relation when adding the new row. This is exactly the circuit condition in the proof of Lemma \ref{lem:elmt_exchange}. If such a circuit $C'$ exists, then adding the row corresponding to $b_{C'}$ creates a row combination to again obtain the row corresponding to $\overline{c}$ and by consequence by another linear combination the row corresponding to $c$. As a result, applying the lemma to a nested set will not change its corresponding hyperplane intersection. 
\end{proof} 

\begin{remark}
    Not all full-dimensional intersections of hyperplanes can be represented by nested sets, and thus, sets yielding the same intersection are not always related by Lemma \ref{lem:elmt_exchange}. 
    
    Consider the building set \[\mathcal{F} = \{\{1\}, \{2\},\{3\},\{4\},\{5\},\{6\},\{1, 2,3,4\}, \{1,2,5,6\}, \{3,4,5,6\}, \{1,2,3,4,5,6\}\}\] and the two sets \[M = \{\{1,2,5,6\}, \{3,4,5,6\}, \{1,2,3,4,5,6\}\}~\text{and}~M' = \{\{1,2,3,4\}, \{3,4,5,6\}, \{1,2,3,4,5,6\}\}\] 
    Both corresponding matrices have rank 3 and the same kernel. 
        
        Neither set contains any almost-circuits (since $\mathcal{F}$ does not contain any 2-element sets) and hence, Lemma \ref{lem:elmt_exchange} is not applicable. 
\end{remark}

In general, describing the structure of the lattice of intersections remains difficult, but in the case of two-dimensional intersections, we can conclude the following: 
\begin{corollary} \label{cor_b_2} 
    Let $\mathcal{F}$ be a building set and let $c_3$ be the number of 3-circuits in $\mathcal{F}$. Then, we can compute the $b_2$-coefficient of the Poincaré polynomial of the corresponding arrangement $\mathcal{A}_{\mathcal{F}}$ as 
    \[b_2 = \binom{\vert \mathcal{F}\vert}{2} - c_3. \]
\end{corollary}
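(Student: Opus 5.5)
We use the standard formula for the second Betti number via rank-two flats. For a central arrangement,
\[
b_2=\sum_{X\in L_2(\mathcal{A})}\bigl(|\mathcal{A}_X|-1\bigr),
\]
since $\mu(V,X)=|\mathcal{A}_X|-1$ for every $X$ of rank two. The hyperplanes of $\mathcal{A}_{\mathcal{F}}$ are in bijection with the elements of $\mathcal{F}$, because distinct $(0/1)$-vectors give distinct hyperplanes. So the plan is to compare $\sum_X(|\mathcal{A}_X|-1)$ with $\binom{|\mathcal{F}|}{2}$, which counts pairs of hyperplanes. Each pair spans exactly one rank-two flat, so
\[
\binom{|\mathcal{F}|}{2}=\sum_{X\in L_2}\binom{|\mathcal{A}_X|}{2}.
\]
Setting $m_X=|\mathcal{A}_X|$, it follows that
\[
\binom{|\mathcal{F}|}{2}-b_2=\sum_{X}\Bigl(\binom{m_X}{2}-(m_X-1)\Bigr)=\sum_X\binom{m_X-1}{2}.
\]
It therefore suffices to show that $\sum_X\binom{m_X-1}{2}$ equals the number $c_3$ of 3-circuits.

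The key step is to determine the structure of each rank-two localization using Lemma~\ref{lemma: connected subgraph arrangements are locallyA2}. Three distinct hyperplanes $H_A,H_B,H_C$ lie in a common rank-two flat exactly when one of the index sets is the disjoint union of the other two, i.e.\ when $\{A,B,C\}$ is a 3-circuit of $\mathcal{F}$. This gives $c_3=\sum_X\binom{m_X}{3}$.

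Next I would show that every $m_X\le 3$. Suppose a flat $X$ contained four hyperplanes. Every triple among them would be a 3-circuit, so four distinct nonempty sets would have every three of them forming a disjoint-union relation. Take the largest set $D$. In any triple containing $D$, $D$ must be the union, so for the other three sets $A,B,C$ the pairs $\{A,B\}$, $\{A,C\}$, $\{B,C\}$ each partition $D$. From $A\sqcup B=D=A\sqcup C$ we get $B=C$, a contradiction. Hence $m_X\in\{2,3\}$.

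With $m_X\in\{2,3\}$ we have $\binom{m_X-1}{2}=\binom{m_X}{3}$ (both are $0$ or $1$). Therefore
\[
\sum_X\binom{m_X-1}{2}=c_3,
\]
which gives $b_2=\binom{|\mathcal{F}|}{2}-c_3$. The main obstacle is the bound $m_X\le 3$, and the short disjointness argument above handles it. A remaining point to state carefully is that every 3-circuit lies in exactly one rank-two flat: the three hyperplanes of a circuit are pairwise distinct, and any two of them already determine the flat.
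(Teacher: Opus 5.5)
Your proof is correct and takes essentially the paper's route: rank-two flats contain either two or three hyperplanes (M\"obius value $1$ or $2$), the three-hyperplane flats correspond bijectively to 3-circuits via Lemma~\ref{lemma: connected subgraph arrangements are locallyA2}, and summing gives the formula. Your explicit double count $\binom{|\mathcal{F}|}{2}-b_2=\sum_X\binom{m_X-1}{2}$ and your self-contained disjointness argument for $m_X\le 3$ make rigorous the steps the paper only sketches.
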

\begin{proof} 
The $b_2$ coefficient is the sum of all Möbius function values of rank 2 intersections in $L(\A)$. Two 2-element sets have the same intersection if they contain an almost-circuit to the same circuit. Since the smallest circuit has 3 elements, there cannot be an equivalence class of size more than 3. The Möbius function can therefore admit two values.  
\begin{enumerate}
    \item If there is just one element in the equivalence class, then the value of the Möbius function is 1. 
    \item If there is an equivalence class corresponding to a circuit, the value of the Möbius function is 2. 
\end{enumerate}
Adding all the values, we get the term above. 
\end{proof} 

\begin{remark} 
	Terao's Factorization Theorem states that if an arrangement is free, then its Poincaré polynomial factors as $\pi(\CA, t) = \prod_{k \in \exp \CA}  (1+ kt)$. It is also well-known that the sum of all exponents equals the number of hyperplanes in the arrangement (see \cite{orlikterao:arrangements} for both statements). This means that Corollary \ref{cor_b_2} may be used to prove that an arrangement cannot factor, if the value of $b_2$ is too large to be calculated from any assumed factorization with coefficients summing up to the number of hyperplanes. 
\end{remark} 
\subsection{A note on nestohedra} \label{subsec:nesto} 
Given a building set $\mathcal{F}$, there is a polytope associated with it: 

\begin{definition}\cite{postnikovreinerwilliams}
	Let $\mathcal{F}$ be a building set on $[n]$, and for $I \in \mathcal{F}$, define $\Delta_I$ as the convex hull of $\{e_i\mid i \in I\}$, where $e_i$ is the standard basis vector in $\mathbb{R}^n$. Define the \emph{nestohedron} $P_{\mathcal{F}}$ as the Minkowski sum: \[P_{\mathcal{F}} := \sum_{I \in \mathcal{F}} y_I \cdot \Delta_I,\] with $y_I$ strictly positive real parameters. 
\end{definition} 

This is also known in the literature as a certain type of \emph{generalized permutahedron} \cite{postnikov:permutohedra} or \emph{nested polytope} \cite{zelevinsky}, as it is strongly related to the nested complex of $\mathcal{F}$: 

\begin{theorem}\cite[Theorem 7.4 \& Proposition 7.5]{postnikov:permutohedra}
	The lattice of faces of $P_{\mathcal{F}}$ (ordered by reverse inclusion) is isomorphic to $\mathcal{N}(\mathcal{F})$.
	
	Furthermore, let $\mathcal{F}$ be a building set and $N\in \mathcal{N}(\mathcal{F})$ a nested set. The face associated with $N$ is given by \[P_N = \{(t_1, \dots , t_n) \in \mathbb{R}^n \mid \sum_{i \in I} t_i = z_I \text{ for $I \in N$ and } \sum_{i \in J} t_i \geq z_J, \text{ for }J \in \mathcal{F}\}\] for $z_I := \sum_{J \subseteq I} y_J$. The dimension of $P_N$ is $n - \vert N \vert$. 
\end{theorem}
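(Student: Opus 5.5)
The plan is to pass from the Minkowski-sum definition of $P_{\mathcal{F}}$ to an inequality description. With that description in hand, faces of $P_{\mathcal{F}}$ correspond to families of \emph{tight} inequalities. The statement then reduces to showing that the families of tight sets occurring for faces are exactly the nested sets.

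\textbf{Step 1 (H-description).} For a Minkowski sum of simplices, the support function in direction $-\mathbb{1}_S$ is additive over summands. The minimum of $\sum_{i\in S}t_i$ over $y_J\Delta_J$ equals $y_J$ if $J\subseteq S$ and $0$ otherwise. Hence $\min_{P_{\mathcal{F}}}\sum_{i\in S}t_i=\sum_{J\subseteq S, J\in\mathcal{F}}y_J=z_S$. The function $S\mapsto z_S$ is supermodular, since every $J\subseteq S\cap T$ is counted on both sides. So $P_{\mathcal{F}}$ is the generalized permutohedron
\[\Big\{t\;\Big|\;\sum_{i\in[n]}t_i=z_{[n]},\ \sum_{i\in S}t_i\ge z_S \text{ for all } S\subseteq[n]\Big\}.\]
I would then show that only $S\in\mathcal{F}$ are needed. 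If $S\notin\mathcal{F}$, decompose $S$ into its maximal $\mathcal{F}$-elements $S_1\sqcup\dots\sqcup S_r$; these are disjoint by Lemma \ref{lem:booleanbuilding sets}. Every $J\in\mathcal{F}$ with $J\subseteq S$ lies in one $S_i$, again by the union-closure of intersecting sets. So $z_S=\sum z_{S_i}$, and the inequality for $S$ is implied by those for the $S_i$.

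\textbf{Step 2 (tight families are nested).} For a point $t\in P_{\mathcal{F}}$, let $T(t)$ be the set of $I\in\mathcal{F}$ whose inequality is tight at $t$.
\begin{itemize}
\item If $I,J\in T(t)$ intersect without containment, then $I\cup J\in\mathcal{F}$. Supermodularity gives $z_{I\cup J}+z_{I\cap J}\ge z_I+z_J$, with strict inequality because $y_{I\cup J}>0$ is counted only on the left. Summing the two tight equations and comparing with the valid inequalities for $I\cup J$ and $I\cap J$ yields a contradiction.
\item Similarly, if disjoint $n_1,\dots,n_k\in T(t)$ with $k\ge2$ have union $U\in\mathcal{F}$, then $z_U>\sum z_{n_j}=\sum_{i\in U}t_i$, again because $y_U>0$. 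This contradicts validity.
\item Since $[n]$ is always tight, $T(t)$ is nested.
\end{itemize}
Each face is the set where a fixed family is tight, so faces map injectively and order-reversingly to nested sets.

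\textbf{Step 3 (every nested set occurs; dimension).} For a nested set $N$ I would construct a point at which exactly the sets of $N$ are tight. This is the main obstacle. I would proceed by induction on the forest structure of $N$, which is a rooted tree with root $[n]$ by Remark \ref{rem:max_set}. For each $I\in N$, the part of $I$ not covered by its children in $N$ is nonempty; this is the non-covering argument from the proof of Lemma \ref{lem:nest_set}. Distribute the mass $z_I-\sum_{\text{children } C}z_C$ over that part, which can be done positively because the difference is at least $y_I>0$. Then perturb generically inside the part so that no further $J\in\mathcal{F}$ becomes tight. Checking that no non-$N$ set $J$ is accidentally tight will use the second nested-set condition: $J$ is not a disjoint union of elements of $N$, so it must cut some part strictly and pick up extra slack. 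Finally, the tight equations of $N$ are linearly independent by Lemma \ref{lem:nest_set}, so the face $P_N$ has dimension $n-|N|$. Inclusion of nested sets corresponds to reverse inclusion of faces, which gives the lattice isomorphism.
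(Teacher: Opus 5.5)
The paper gives no proof of this statement; it is quoted from Postnikov. I have therefore judged your proposal on its own. Step 2 is correct. Step 3, which you rightly flag as the main obstacle, has a genuine gap: the recipe ``distribute the mass $z_I-\sum_C z_C$ positively over the part, then perturb generically'' does not produce a point of $P_{\mathcal{F}}$.

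\textbf{The gap in Step 3.} Positivity is not the relevant constraint. The relevant constraints are the inequalities $\sum_{i\in J}t_i\ge z_J$ for $J\notin N$, and you never verify them.
\begin{itemize}
\item Already for $\mathcal{F}=\{\{1\},\{2\},\{1,2\}\}$ and $N=\{\{1,2\}\}$, any positive split of $z_{12}=y_1+y_2+y_{12}$ with $t_1<y_1$ violates the inequality for $\{1\}$.
\item A generic perturbation cannot repair this. Moving off a tight inequality in the wrong direction violates it, and the existence of a direction that loosens all unwanted tight inequalities at once is exactly what must be proved.
\item Your slackness heuristic is also false. For the building set of $P_3$ and $N=\{\{1\},\{1,2,3\}\}$, the set $J=\{2,3\}\notin N$ equals the part of $[3]$ and cuts no part strictly.
\end{itemize}

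\textbf{How to fix it.} The missing idea is to distribute the mass according to the Minkowski summands.
\begin{itemize}
\item For $J\in\mathcal{F}$, let $I_N(J)$ be the smallest element of $N$ containing $J$. The elements of $N$ containing $J$ pairwise intersect, hence form a chain ending in $[n]$, so $I_N(J)$ exists. Let $p(I)$ be the part of $I$ not covered by its children.
\item Claim: $J\cap p(I_N(J))\neq\emptyset$. Otherwise $J$ is covered by the children $C_1,\dots,C_m$ of $I_N(J)$ that it meets. If $m=1$, this contradicts minimality of $I_N(J)$. If $m\ge 2$, union-closure gives $J\cup C_1\cup\dots\cup C_m=C_1\sqcup\dots\sqcup C_m\in\mathcal{F}$, violating the second nested-set condition.
\item Put $t=\sum_{J\in\mathcal{F}}y_J\,b_J$, where $b_J$ is the barycenter of the face $\Delta_{J\cap p(I_N(J))}$ of $\Delta_J$. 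Then $t\in P_{\mathcal{F}}$ by definition, with no inequality left to check. It puts mass $\sum_{I_N(J)=I}y_J=z_I-\sum_C z_C$ on $p(I)$, so this is your construction with the right distribution.
\item Tightness on $N$: take $I\in N$ and $J\not\subseteq I$. Either $I\cap I_N(J)=\emptyset$, or $I$ lies in a child of $I_N(J)$. In both cases the support of $b_J$ misses $I$, so $\sum_{i\in I}t_i=z_I$.
\item Slackness off $N$: take $S\in\mathcal{F}\setminus N$ and set $K=I_N(S)\supsetneq S$. Then $K$ contributes $y_K\,|S\cap p(K)|/|p(K)|$ to $\sum_{i\in S}t_i$, which is nonzero by the claim applied to $S$. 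It contributes nothing to $z_S$, and no $J$ contributes less to the left side than to $z_S$. So $S$ is slack, and $T(t)=N$.
\end{itemize}
Equivalently, $P_N$ is the face minimizing $\sum_{I\in N}\sum_{i\in I}t_i$, computed summand by summand. With this point in hand, your dimension count via Lemma \ref{lem:nest_set} and the lattice isomorphism go through.

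\textbf{A smaller point in Step 1.} The values $\min_{P_{\mathcal{F}}}\sum_{i\in S}t_i=z_S$ only give $P_{\mathcal{F}}\subseteq Q$, where $Q$ is the inequality polytope. Supermodularity alone does not give equality. Compare support functions instead:
\begin{itemize}
\item Write $w=c\mathbbm{1}+\sum_k c_k\mathbbm{1}_{S_k}$ with $c_k>0$ and $S_1\supsetneq S_2\supsetneq\cdots$ the upper level sets of $w$.
\item Then $\min_{j\in J}w_j=c+\sum_{k:\,J\subseteq S_k}c_k$, so $\min_{P_{\mathcal{F}}}\langle w,t\rangle=c\,z_{[n]}+\sum_k c_k z_{S_k}$.
\item This value is also a lower bound on $Q$. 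Hence the support functions agree and $P_{\mathcal{F}}=Q$.
\end{itemize}
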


In our setting, since the building set is connected, there is just one set included in all nested sets, namely $[n]$, and thus the polytope $P_{\mathcal{F}}$ has dimension $n-1$. Hence, the facets are characterized by the equations $\sum_{i \in I} t_i = z_I$ for all $I \in \mathcal{F}$ (since $\{I, [n]\}$ is always a nested set and all nested sets of size 2 are of this form). If we consider the non-affine versions of these facet-defining hyperplanes, we recover the restriction of $\mathcal{A}_{\mathcal{F}}$ to the hyperplane $H_{[n]}$. 

\section{Connected hypersubgraph arrangements} 

\subsection{Preliminaries}\label{sec:hypersubgraphs}
Our goal is to generalize the connected subgraph arrangements by extending the graph class to hypergraphs. We collect some basic hypergraph definitions given in \cite{bahmanianSajna:connectedhypergraphs}.
\begin{definition}
A \emph{hypergraph} $G$ is an ordered pair $(N,E)$, where $N$ and $E$ are disjoint finite sets such that $N\neq\emptyset$ and there is a function $\psi:E\to 2^N$, called the \emph{incidence function}. We call the set $N=N(G)$ the vertex set and $E=E(N)$ the edge set of $G$. 
\end{definition}
\begin{definition}
Two edges $e,e'\in E$ are parallel if $\psi(e)=\psi(e')$. A hypergraph is called \emph{simple} if $\psi$ is injective.\\
In this article, we examine simple hypergraphs $G=(N,E)$, where (for some $n\in\mathbb{N}$) we have $N=[n]:=\{1,2,\dots,n\}$ and $E\subset 2^{[n]}$. Because of this, we omit the incidence function $\psi$ and write edges explicitly. 

To distinguish between edges of graphs and hypergraphs, we call an edge $e\in 2^{[n]}$ a \emph{genuine hyperedge} if $\vert e\vert\geq 3$, a hyperedge if $\vert e\vert\geq 2$ and an edge if $\vert e\vert= 2$.
\end{definition}

The concept of paths and by extension connectivity of a graph can be extended to hypergraphs in a natural way: 
\begin{definition}
 Call $u,v\in N$ \emph{adjacent} if there exists an $e\in E$ with $\{u,v\}\subseteq e$. 

Let $0\leq k\in \mathbb{Z}$. A \emph{$(u,v)$-walk} of length $k$ in $G$ is a sequence $v_1e_1v_2e_2\dots e_kv_{k+1}$ with $v_1,v_2,\dots,v_{k+1}\in N$, $e_1,e_2,\dots,e_k\in E$, $v_1=u$, $v_{k+1}=v$, and $v_i,v_{i+1}$ are adjacent in $G$ via the edge $e_i$ for all $1 \leq i \leq k$.

A $(u,v)$-walk of length $k$ is called a \emph{path} if all $k+1$ vertices $v_i$ and all $k$ edges $e_i$ are distinct.
\end{definition}
 
 We call $G$ \emph{connected} if for all $u,v\in N$ there exists a $(u,v)$-path in $G$.
Let $I\subseteq N$. Define the \emph{induced hypersubgraph of $G$}  on $I$ as $G[I]:=(I,\{e\in E\mid e\subseteq I\})$.

\medskip 
We note some facts about connected hypersubgraph arrangements (see Definition \ref{definition: connected hypersubgraph arrangements}). For readability, we will often denote a hyperplane $H_{i_1,\dots, i_k}$ as $H_{i_1\dots i_k}$.

\begin{remark}\label{remark: connected hypersubgraph arrangement remarks}
    \begin{enumerate}
        \item[(1)] for all $i\in N$ we have $H_i = H_{\{i\}}\in\CA_G$ since $G[i]$ is connected (use a path of length $0$).
        \item[(2)] Every connected hypersubgraph arrangement $\CA_G$ is a subarrangement of the connected subgraph arrangement $\CA_{K_n}$, where $K_n$ is the complete graph with $n$ vertices. In particular, we have $\vert\CA_X\vert\leq 3$ for all $X\in L(\CA)$ with $\rank(X)=2$ {\cite[Lemma~2.10]{cuntzkuehne:subgrapharrangements}}.
        \item[(3)] If for $G=(N,E)$ we have $\vert e\vert=2$ for all $e \in E$, then the connected hypersubgraph arrangement is just a connected subgraph arrangement.
        \item[(4)] Not every connected hypersubgraph arrangement is a connected subgraph arrangement.\\ Example: Define the hypergraph $G=(\{1,2,3\},\{\{1,2,3\}\})$, then $\CA_G=\{H_1,H_2,H_3,H_{123}\}$ is a subarrangement of the connected subgraph arrangement $\CA_{K_3}$, where $K_3$ is the complete graph on $3$ vertices, but there does not exist a graph $\mathcal{G}$ such that $\CA_G=\CA_{\mathcal{G}}$.
        \item[(5)] Not every free connected hypersubgraph arrangement is a connected subgraph arrangement. See Table \ref{tab:building sets_4} for free, non-graphic examples with up to 4 vertices. 
\end{enumerate}
\end{remark}

\subsection{Tools for hypersubgraph arrangements} \label{sec:operations}

 We start by investigating the graph of simple edges of $G$ and its connected subgraph arrangement to see if this already restricts the local properties that the connected hypersubgraph arrangement $\CA_G$ can have.  

\begin{definition}\label{Definition: graphs created by 2 building sets}
	For a hypergraph $G=(N,E)$ we define $\underline{E}=\{e\in E\mid \vert e\vert=2\} \text{ and the graph }\underline{G}=(N,E_2).$ Call the corresponding connected hypersubgraph arrangement $\CA_{\underline{G}}$ the \emph{underlying connected subgraph arrangement of $\CA_G$}. In particular, we have a decomposition of $\CA_{\underline{G}}$ into connected subgraph arrangements $\CA_{\underline{G}_i}$ as  $\CA_{\underline{G}} = \CA_{\underline{G}_1}\times\CA_{\underline{G}_2}\times\dots\times\CA_{\underline{G}_k}$, which is a subarrangement of $\CA_G$. We write $\mathscr{C}_G=\{\underline{G}_1,\underline{G}_2,\dots,\underline{G}_k\}$ for the set of connected components of $\underline{G}$. 
\end{definition}   

\begin{remark}\label{Remark: Underlying Connected Subgraph Remarks}
	Let $G=(N,E)$ be a hypergraph. 
	\begin{enumerate}
		\item For any $\underline{G}_i=(N_i,\underline{E}_i)\in\mathscr{C}_G$, the induced hypersubgraph $G[N_i]$ contains the subgraph $\underline{G}_i$ and the connected hypersubgraph arrangement $\CA_{G[N_i]}$ lies between the connected hypersubgraph arrangements $\CA_{\underline{G}_i}$ and $\CA_{K_n}$, where $K_n$ is the complete graph on $n$ vertices. 
		\item Let $e\in E$ such that $e\cap N_i\neq \emptyset$ for at least two $\underline{G}_i=(N_i,\underline{E}_i)$, then $\vert e\vert \geq 3$. Otherwise, the $\underline{G}_i$ would be connected by the edge $e$.
		If instead $e\subseteq N_i$ and $H_e\not\in\CA_{\underline{G}_i}$, then $\underline{G}_i[e]$ is not connected. Otherwise, by definition, we would already have $H_e\in \CA_{\underline{G}_i}$. 
	\end{enumerate}
\end{remark}

The underlying connected subgraph arrangement is of great importance as we showcase in the next section.

\bigskip
\noindent 
In \cite{cuntzkuehne:subgrapharrangements}, the authors developed tools to work with connected subgraph arrangements by relating graph operations such as contraction and taking induced subgraphs to arrangement theory. Since hypergraphs are a more general case, we need to reexamine this strategy to see to what extent we may apply the same techniques. 
In this section, we generalize the key tools used by Cuntz and Kühne to the hypergraph case. 
\begin{lemma}\label{lemma: G not in B then AB reducible}
    Let $G=(N,E)$ be a hypergraph, then $\CA_G$ is irreducible if and only if $G$ is connected.
\end{lemma}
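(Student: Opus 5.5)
We prove the two directions separately, using the standard notion of reducibility: $\CA$ is reducible if, after a linear change of coordinates, $V=V_1\oplus V_2$ with both summands nonzero and every hyperplane of $\CA$ of the form $H_1\oplus V_2$ or $V_1\oplus H_2$. Equivalently, the set of defining linear forms $\{\alpha_H\}$ splits into two nonempty classes whose spans intersect trivially and together span $V^*$ (here $\rank\CA_G=n$, since all $x_i$ occur).

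\emph{If $G$ is not connected, then $\CA_G$ is reducible.} Let $N=N_1\sqcup N_2$ with $N_1,N_2\neq\emptyset$, chosen so that no edge meets both parts; for instance, take $N_1$ a connected component. Every connected $I$ satisfies $I\subseteq N_1$ or $I\subseteq N_2$. Otherwise there would be a path in $G[I]$ from a vertex of $N_1$ to a vertex of $N_2$, and that path must use an edge meeting both parts, which is impossible. Hence each form $\sum_{i\in I}x_i$ lies in $\operatorname{span}\{x_i: i\in N_1\}$ or in $\operatorname{span}\{x_i: i\in N_2\}$. This gives $\CA_G=\CA_{G[N_1]}\times\CA_{G[N_2]}$ with respect to $V=\mathbb{Q}^{N_1}\oplus\mathbb{Q}^{N_2}$, so $\CA_G$ is reducible.

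\emph{If $G$ is connected, then $\CA_G$ is irreducible.} Suppose instead that the forms split as $\Phi_1\sqcup\Phi_2$ with $\operatorname{span}\Phi_1\cap\operatorname{span}\Phi_2=0$ and both classes nonempty. The key tool is Lemma \ref{lemma: connected subgraph arrangements are locallyA2}. Whenever $A,B,A\sqcup B$ are all in $\mathcal{F}_{\CA_G}$, the three forms are pairwise independent but linearly dependent, since $x_A+x_B=x_{A\sqcup B}$. In any decomposition, three such forms must lie in the same class: if one class held exactly one of them, that form would be a combination of the other two and so would lie in both spans, forcing it to be $0$. The plan is to show that this "circuit-triple" relation connects all the forms, so the split cannot exist.

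\emph{Connecting the forms.} Pick any $F\in\mathcal{F}_{\CA_G}$ with $|F|\ge 2$ and any $i\in F$. Since $G[F]$ is connected, choose a vertex $j\in F$ whose removal keeps $G[F]$ connected, for example a leaf of a spanning tree of the 2-section of $G[F]$. Then $F\setminus\{j\}$ and $\{j\}$ are both in $\mathcal{F}_{\CA_G}$, and they form a triple with $F$, so $x_F$, $x_{F\setminus j}$ and $x_j$ lie in one class. By induction on $|F|$, every $x_F$ lies in the same class as each singleton $x_i$ with $i\in F$.

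It then remains to link different singletons. For adjacent $u,v$ lying in a common edge $e$, both $x_u$ and $x_v$ share a class with $x_e$, hence with each other. Because $G$ is connected, all singletons lie in one class, and therefore every form does, contradicting that both classes are nonempty.

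The main obstacle is this connecting step: making sure a suitable vertex $j$ always exists inside a hyperedge-induced connected set. Taking $j$ to be a leaf of a spanning tree of the 2-section graph of $G[F]$ handles it, since removing a leaf keeps the 2-section, and hence $G[F\setminus j]$, connected.
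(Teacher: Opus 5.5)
Your argument that a disconnected $G$ gives a reducible $\CA_G$ is fine and matches the paper. The converse direction has a genuine gap in the inductive step.

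\textbf{The gap.} You claim that removing a leaf $j$ of a spanning tree of the 2-section of $G[F]$ keeps $G[F\setminus\{j\}]$ connected. This is false for hypergraphs. The induced hypersubgraph $G[F\setminus\{j\}]$ keeps only the hyperedges contained in $F\setminus\{j\}$. So every 2-section adjacency witnessed by a hyperedge that contains $j$ disappears, even between two vertices different from $j$.

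For $G=([3],\{\{1,2,3\}\})$ and $F=\{1,2,3\}$ one has $\mathcal{F}_{\CA_G}=\{\{1\},\{2\},\{3\},\{1,2,3\}\}$. Here no $j$ with $F\setminus\{j\}\in\mathcal{F}_{\CA_G}$ exists. Worse, this building set contains no triple $A,B,A\sqcup B$ at all, yet $\CA_G$ is irreducible: it is four planes in general position in $\mathbb{Q}^3$. So any argument that propagates class membership only along the three-term relations of Lemma \ref{lemma: connected subgraph arrangements are locallyA2} cannot succeed. Once genuine hyperedges are present, it is the strategy itself that fails, not just your choice of $j$.

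\textbf{The repair.} Use longer circuits. Your observation about triples generalizes. Let $C$ be a minimally dependent set of forms, and suppose $C=C_1\sqcup C_2$ with $C_1\subseteq\Phi_1$ and $C_2\subseteq\Phi_2$ both nonempty. Take the relation $\sum_{c\in C}\lambda_c c=0$, which has all $\lambda_c\neq 0$. Then $\sum_{c\in C_1}\lambda_c c=-\sum_{c\in C_2}\lambda_c c$ lies in $\operatorname{span}\Phi_1\cap\operatorname{span}\Phi_2=0$. This is a nontrivial relation on the proper subset $C_1$, contradicting minimality.

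For every $F\in\mathcal{F}_{\CA_G}$, the set $\{x_i: i\in F\}\cup\{x_F\}$ is such a circuit. This gives directly the statement you wanted from your induction, and your linking step through edges then finishes the proof.

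The paper's proof is even shorter. Connectivity gives $H_N\in\CA_G$, and the circuit $\{x_1,\dots,x_n,x_N\}$ forces all singletons into one class. Since the singletons span $V^*$, the other class must be empty. Equivalently, $\CA_G$ contains the full-rank irreducible subarrangement $\{H_1,\dots,H_n,H_N\}$.
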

\begin{proof}
     If $G$ is connected, then we have $H_N\in\CA_G$ (since $G=G[N]$), and therefore $\CA_G$ contains the irreducible subarrangement $\{H_1,H_2,\dots, H_n, H_N\}$. So $\CA_G$ is irreducible.\\
    Conversely, if $G$ is not connected, we can write $N=I_1\dot{\cup}I_2\dot{\cup}\dots\dot{\cup}I_k$ with $k\geq 2$, where $I_i\subseteq N$ such that the $G[I_i]$ are the maximal connected components of $G$. Since $I_i\cap I_j=\emptyset$ for $i\neq j$, we have $$\CA_G\cong\CA_{G[I_1]}\times\CA_{G[I_2]}\times\dots\times\CA_{G[I_k]},$$ so $\CA_G$ is reducible. \qedhere
\end{proof}

\noindent As in the case for simple graphs, there is a connection between the connected hypersubgraph arrangements of a hypergraph $G=(N,E)$ and an arbitrary induced hypersubgraph of $G$ on a subset of vertices $M\subseteq N$. The following proof is analogous to its simple graph version in \cite{cuntzkuehne:subgrapharrangements}. 

\begin{lemma}\label{lemma: Induced subgraph is localization}
    Let $G=(N,E)$ be a hypergraph on $n$ vertices, let $\CA_G$ be the corresponding connected hypersubgraph arrangement, and $M\subseteq N$. If $\CA_G$ has a local property $P$, then $\CA_{G[M]}$ has $P$ as well.
\end{lemma}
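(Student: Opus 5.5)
The plan is to exhibit $\CA_{G[M]}$, up to a product with an empty arrangement, as a localization of $\CA_G$. Since $P$ is local, it is inherited by every localization $(\CA_G)_X$, and hence by $\CA_{G[M]}$. This mirrors the graph case in \cite{cuntzkuehne:subgrapharrangements}.

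\textbf{Choosing the flat.} Take
\[X:=\bigcap_{i\in N\setminus M} H_i=\{v\in V\mid x_i(v)=0 \text{ for all } i\notin M\}.\]
This is an element of $L(\CA_G)$, because every singleton hyperplane $H_i$ belongs to $\CA_G$ by Remark~\ref{remark: connected hypersubgraph arrangement remarks}~(1). If $M=N$ there is nothing to prove, since then $X=V$ and $(\CA_G)_X=\CA_G$.

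\textbf{Identifying the localization.} A hyperplane $H_I$ contains $X$ exactly when its defining form $\sum_{i\in I}x_i$ lies in the span of $\{x_i\mid i\notin M\}$. Since the $x_i$ form a basis, this happens if and only if $I\subseteq N\setminus M$. Therefore
\[(\CA_G)_X=\{H_I\in\CA_G\mid I\subseteq N\setminus M\}.\]
The condition $H_I\in\CA_G$ means that $G[I]$ is connected. For $I\subseteq N\setminus M$ we have $G[I]=(G[N\setminus M])[I]$, because both keep exactly the edges $e\in E$ with $e\subseteq I$. So the localization is exactly the connected hypersubgraph arrangement of $G[N\setminus M]$, viewed in $V$ and using only the coordinates indexed by $N\setminus M$.

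\textbf{Removing the extra coordinates.} The identification above gives $(\CA_G)_X\cong \CA_{G[N\setminus M]}\times\Phi_{|M|}$, where $\Phi_{|M|}$ is the empty arrangement on the complementary coordinates. Applying this with $M$ replaced by $N\setminus M$ (that is, localizing at $Y=\bigcap_{i\in M}H_i$) shows that $\CA_{G[M]}\times\Phi_{|N\setminus M|}$ is a localization of $\CA_G$. Hence it has property $P$.

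The main point needing care is the passage from $\CA_{G[M]}\times\Phi$ to $\CA_{G[M]}$ itself, living in $\mathbb{Q}^{|M|}$. I will handle this as the paper's conventions (and Cuntz–Kühne) do: regard an arrangement and its product with an empty arrangement as the same arrangement for the purposes of local properties. This means identifying an arrangement with its essentialization, under which the hyperplanes of the localization become exactly those of $\CA_{G[M]}$. All the properties used later in the paper are insensitive to adding an empty factor, namely freeness (a basis of $D(\CA_{G[M]})$ together with the remaining $\partial_i$), simpliciality, and factorization. The only genuinely combinatorial step is the span criterion $\sum_{i\in I}x_i\in\langle x_i\mid i\notin M\rangle\iff I\subseteq N\setminus M$, which follows immediately from linear independence of the $x_i$.
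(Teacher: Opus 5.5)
Your proof is correct and is essentially the paper's argument: you localize at $\bigcap_{j\in M}H_j$ (your $Y$), show that $H_I$ contains this flat iff $I\subseteq M$, and note that $G[I]$ is the same whether computed in $G$ or in $G[M]$. The paper takes this flat directly rather than via the complement, and tacitly identifies $\CA_{G[M]}$ with its embedding in $\mathbb{Q}^n$, which is the convention you state explicitly. One small slip: when $M=N$ your first flat is $X=V$, and the localization at $V$ is empty, not $\CA_G$ (the right flat is the center $\bigcap_{i\in N}H_i$); this case is trivial anyway.
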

\begin{proof}
    By definition, $G[M]$ is the hypergraph with vertex set $M$ and edge set $\{e\in E\mid e\subseteq M\}$. 
     Now consider $X=\bigcap_{j\in M}H_{\{j\}}\in L(\CA_G)$. We show that $(\CA_G)_X= \CA_{G[M]}$
    when embedding $\CA_{G[M]}$ into $\mathbb{Q}^n$.
    Let $H_I\in\CA_G$ with $I\subseteq N$ arbitrary, then by definition, $H_I\in (\CA_G)_X$ if and only if $X\subseteq H_I$. But this is equivalent to $I\subseteq M$ (otherwise choose an $i\in I, i\not\in M$ and $x=(x_1,\dots,x_n)$ with $x_i=1, x_j = 0  ~(j\neq i)$. Then $x$ lies in $X$ but not in $H_I$ which contradicts $X\subseteq H_I$).
    By definition, this shows that $G[I]$ as a subgraph of $G$ is equal to $G[I]$ as a subgraph of $G[M]$. We know that $H_I\in\CA_G$ and therefore $G[I]$ is connected, so $H_I\in\CA_{G[M]}$. This shows $(\CA_G)_X \subseteq \CA_{G[M]}$. If $H_I\in\CA_{G[M]}$, then $I\subseteq M$. Since for $(x_1,x_2,\dots,x_n)\in X$ it holds that $x_i=0$ if $i\in M$ we have $x\in H_I$. This proves the equality above.  \\
    Since $P$ is a local property and $\CA_G$ has property $P$, $\CA_{G[M]}$ as its localization has property $P$ as well.
\end{proof}

We generalize the concept of edge contractions from the graph case to the hypergraph case.

\begin{definition}\label{definition: edge contraction}
    Let $G=(N,E)$ be a hypergraph and $e\in E$.  We define $G/e$ the contraction of $G$ by $e$ as the following hypergraph. Fix some $i\in e$  
    and define $N'=(N\setminus e)\cup\{i\}$ as the ground set for $G/e$. Define the new set of edges as $$E'=\{e'\in E\mid e'\cap e=\emptyset\}\cup\{e'\setminus e\cup\{i\}\mid e'\cap e\neq\emptyset\}.$$ 
    Note that $G/e$ is the connected hypersubgraph arrangement defined by the hypergraph $G/e=(N',E')$. In other words: All vertices $j \in e$ are identified as one fixed vertex $i\in e$ and the set of neighbors of $i$ is the union of the sets of neighbors of all vertices $j\in e$. 
\end{definition}

\noindent
As a convention, we choose $i$ to be the smallest vertex label of $e$. 

\noindent
The proof of the next statement also follows the proof of its graphic counterpart \cite[Lemma~6.4]{cuntzkuehne:subgrapharrangements} closely. 

\begin{lemma}\label{lemma: contraction of building set}
    Let $G=(N,E)$ be a hypergraph on $n$ vertices, let $\CA_G$ be its connected hypersubgraph arrangement, and $e\in E$. If $\CA_G$ has a local property $P$, then $\CA_{G/e}$ has the property $P$.
\end{lemma}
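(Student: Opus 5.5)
Following the graphic case \cite[Lemma~6.4]{cuntzkuehne:subgrapharrangements}, we realize $\CA_{G/e}$ as a restriction of a localization of $\CA_G$; equivalently, we identify $\CA_{G/e}$ with a localization $(\CA_G)_X$ modulo coordinate identifications. Concretely, let $e=\{i=i_1<i_2<\dots<i_m\}$ and consider the subspace $Y=\{x\in V\mid x_{j}=0 \text{ for } j\in e\setminus\{i\}\}$. The functionals $x_j$ for $j\in e\setminus\{i\}$ are not themselves restricted away; instead we work with the map $\pi:V^*\to (V')^*$, $V'=\mathbb{Q}^{N'}$, which sends $x_j\mapsto x_j$ for $j\notin e$, $x_i\mapsto x_i$, and $x_j\mapsto 0$ for $j\in e\setminus\{i\}$. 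Then $\pi(\sum_{k\in I}x_k)=\sum_{k\in I'}x_k$ where $I'=(I\setminus e)\cup\{i\}$ if $i\in I$ and $I'=I\setminus e$ otherwise. The cleaner choice, which I will commit to, is the one used by Cuntz--K\"uhne: take $X=\bigcap_{j\in N\setminus e}H_{\{j\}}\cap H_e$? No; instead take the localization at $X=\bigcap_{I\in\mathcal{F}, I\supseteq e \text{ or } I\cap e=\emptyset}H_I$ is too big. So the plan is: show $\CA_{G/e}\cong(\CA_G)^{Z}$ fails in general (restrictions do not preserve local properties), and so instead prove that $\CA_{G/e}$ is isomorphic to the localization $(\CA_G)_X$ with $X=\bigcap_{I\in\mathcal{F}_{\CA_G},\ I\cap e=\emptyset\text{ or }e\subseteq I}H_I$, viewed in the quotient by $X$.

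Key steps in order. First, describe $\mathcal{F}_{\CA_{G/e}}$ in terms of $\mathcal{F}=\mathcal{F}_{\CA_G}$: a set $J\subseteq N'$ is connected in $G/e$ iff either $i\notin J$ and $J\in\mathcal{F}$ (it avoids $e$, and edges of $G/e$ inside $J$ are exactly edges of $G$ inside $J$), or $i\in J$ and $(J\setminus\{i\})\cup e\in\mathcal{F}$. This uses that $e$ is itself connected, so paths through the contracted vertex lift to paths through $e$ and conversely; it is a routine path-concatenation argument as in the proof of Lemma~\ref{lem:hypergrahs and building sets}. Hence $\mathcal{F}_{\CA_{G/e}}$ is in bijection with $\mathcal{S}=\{I\in\mathcal{F}\mid I\cap e=\emptyset\text{ or } e\subseteq I\}$. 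Second, let $X=\bigcap_{I\in\mathcal{S}}H_I$ and show $(\CA_G)_X=\{H_I\mid I\in\mathcal{S}\}$: the vector $x$ with $x_{i_1}=1,x_{i_2}=-1$ and all else $0$ lies in every $H_I$ with $I\in\mathcal{S}$ (as $I$ contains both or neither), but not in $H_I$ for $I\in\mathcal{F}$ with $I$ meeting $e$ partially, after choosing the pair $i_1,i_2$ appropriately; more robustly, use the subspace $W=\{x\mid \sum_{j\in e}x_j=0,\ x_k=0\ (k\notin e)\}\subseteq X$ and a generic point of $W$, which avoids every $H_I$ with $\emptyset\neq I\cap e\subsetneq e$. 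Third, the linear forms $\sum_{k\in I}x_k$, $I\in\mathcal{S}$, all lie in the subspace of $V^*$ spanned by $x_k$ ($k\notin e$) and $x_e:=\sum_{j\in e}x_j$; the linear isomorphism of this span with $(V')^*$ sending $x_e\mapsto x_i$ carries these forms exactly to the defining forms of $\CA_{G/e}$. Thus $(\CA_G)_X$ is, up to a product with an empty arrangement, isomorphic to $\CA_{G/e}$, and the local property transfers.

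The main obstacle is the second step: verifying that no hyperplane $H_I$ with $I$ partially meeting $e$ contains $X$. The generic point of $W$ handles this, since $\sum_{k\in I}x_k=\sum_{k\in I\cap e}x_k$ is a nonzero functional on $W$ whenever $\emptyset\neq I\cap e\subsetneq e$; and $W\subseteq H_I$ for $I\in\mathcal{S}$ is immediate. One must also note that properties considered (freeness, simpliciality, $K(\pi,1)$) are invariant under taking products with the empty arrangement and linear isomorphism, which is standard.
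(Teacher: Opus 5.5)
Your final argument is correct and is essentially the paper's proof. The paper localizes at $X=H_e\cap\bigcap_{k\notin e}H_k$, which coincides with both your $W$ and your $\bigcap_{I\in\mathcal{S}}H_I$ (since $e$ and the singletons $\{k\}$, $k\notin e$, lie in $\mathcal{S}$), and it identifies $(\CA_G)_X$ with $\CA_{G/e}$ via the same substitution $x_i\leftrightarrow\sum_{j\in e}x_j$.

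Your explicit bijection $\mathcal{F}_{\CA_{G/e}}\leftrightarrow\mathcal{S}$ and your check that no $H_I$ with $\emptyset\neq I\cap e\subsetneq e$ contains $X$ are, if anything, more careful than the paper's version. Do discard the false starts in your first paragraph, where you reject (as ``No'' and ``too big'') the very intersection you end up using.
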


\begin{proof}
    Our goal is to find an intersection $X\in L(\A)$, such that we can express the contraction along an edge as a localization, which proves the claim. 
    
    Let $e\in E$, fix $i\in e$ and let $G/e=(N',E')$ as in Definition \ref{definition: edge contraction}. We identify all vertices in $e$ with the vertex $i$ after contracting $G$ by $e$ and relabel the vertices such that $N'=\{1,2,\dots,i\}$, so $e=\{i,i+1,\dots,n\}$. Define $X=H_e \cap \big(\bigcap_{1\leq k\leq i-1} H_k \big) $. We construct an embedding $\varphi$ from $(\mathbb{Q}^{i})^*$ to $(\mathbb{Q}^n)^*$ showing $\CA_{G/e}\simeq(\CA_G)_X$. Define the linear map $\varphi:(\mathbb{Q}^{i})^*\to(\mathbb{Q}^n)^*$ by  $$\varphi(x_k) = \begin{cases} x_k & k < i\\ \sum_{j=i}^n  x_j & k = i. \end{cases} $$
    Note that since $\varphi$ is linear, it induces a map between the hyperplanes $\ker(\alpha)$ and $\ker(\varphi(\alpha))$ for all $\alpha\in(\mathbb{Q}^i)^*$. First, we show that $\varphi$ is well-defined by proving $\varphi(\alpha_H)(x)=0$ for all $H\in\CA_{G/e}, x\in X$. Let $J\subseteq N'$ such that $H_J\in\CA_{G/e}$, then we have one of the two following cases.
    \begin{enumerate}
    \item[$i\in J$]: Because $i\in J$, we have $\varphi(\alpha_{H_J})=\alpha_{H_{J\cup e}}$. The induced subgraph $(G/e)[J]$ is connected by assumption. The equalities $J\cup e=N$ and $J\cap e=\{i\}$ imply that the induced subgraph $G[J\cup e]$ is connected, which shows $H_{J\cup e}\in\CA_G$.\\
    It remains to show that $X\subseteq H_{J\cup e}$. Choose an arbitrary $x=(x_1,x_2,\dots,x_n)\in X$, then $$x_k=0, (\text{for }k\leq i-1)\text{ and }\sum_{k=i}^n x_k=0,$$ which shows that $x\in H_{J\cup e}$ and therefore $X\subseteq H_{J\cup e}$.    
    \item[$i\not\in J$]: In this case, $J\subseteq \{1,2,\dots,i-1\}\text{ and }\varphi(\alpha_{H_J})=\alpha_{H_J}.$ In particular, $G[J]=(G/e)[J]$ is connected. Let $x=(x_1,x_2,\dots,x_n)\in X$, then $x_k=0$ for $k< i$, so $x\in H_J$ which shows that $X\subseteq H_J$. \end{enumerate}
    \noindent 
    We conclude that $\varphi$ is well-defined and injective. If $H_J\in(\CA_G)_X$, then $\alpha_{H_J}\in \text{span}\{x_1,x_2,\dots,x_{i-1},\sum_{k=i}^n x_k\}$. So there exist $a_1,a_2,\dots,a_i\in \{0,1\}$ such that $$ \alpha_{H_J}=\sum_{k=1}^{i-1}a_kx_k+a_i\sum_{k=i}^nx_k,$$ which gives $\varphi^{-1}(\alpha_{H_J})=\sum_{k=1}^i a_kx_k$ by definition of $\varphi$. Analogous to the two cases above, one confirms $\ker(\varphi^{-1}(\alpha_{H_J}))\in \CA_{G/e}$ and therefore $\varphi$ induces a surjective map between $\CA_{G/e}$ and $(\CA_G)_X$.    
\end{proof}

\section{Freeness} \label{sec:freeness}
Let $G$ be a connected hypergraph with underlying simple graph $\underline{G}$. In this section, we show that given a free connected hypersubgraph arrangement $\CA_G$ the underlying connected subgraph arrangement $\CA_{\underline{G}}$ must also be free (see Theorem \ref{theorem: A_B2 is free}).

 We then showcase some obstructions to freeness of connected hypersubgraph arrangements between a free connected subgraph arrangement and the connected subgraph arrangement of the complete graph.
 
 Lastly, we give a simple construction for obtaining new free arrangements. 

\subsection{Proof of Theorem \ref{theorem: A_B2 is free}}
The proof is as follows: In Lemmas \ref{lemma: G1G2 subgraph totally non-free} to \ref{lemma: G8 subgraph totally non-free}, we consider the graphs $\CK_i, 1\leq i \leq 8$ in Figure \ref{Figure: G1 to G8} and show that there are no free connected hypersubgraph arrangements between $\CA_{\CK_i}$ and $\CA_{K_{n_i}}$, where $n_i = \vert V(\CK_i)\vert$. Some of these partial results are broader in scope, proving totally non-freeness or a general statement for 0/1-arrangements.

 Combining these findings, we conclude in Proposition \ref{Proposition: underlying connected subgraph arrangment is free} that the only possible choices for the connected components of $\underline{G}$ are the four classes of graphs listed in Theorem \ref{theorem: FreeConnectedSubgraphArrangements}. From Proposition \ref{Proposition: underlying connected subgraph arrangment is free} and Theorem \ref{theorem: FreeConnectedSubgraphArrangements} we deduce Theorem \ref{theorem: A_B2 is free} without further calculations.

\noindent In this section, we number the vertices of graphs $\CK_1$ to $\CK_8$ as in Figure \ref{Figure: G1 to G8} and create the corresponding connected hypersubgraph arrangements $\mathcal{A}_{\CK_i}$ according to Definition \ref{definition: connected hypersubgraph arrangements}. We showcase the idea of proof for Lemmas \ref{lemma: G1G2 subgraph totally non-free} to \ref{lemma: G8 subgraph totally non-free} in the following remark.

\begin{remark}
    Let $G=(N,E)$ be a hypergraph. First, assume that there exists a $\underline{G}_i=(N_i,\underline{E}_i)\in\mathscr{C}_G$ (recall Definition \ref{Definition: graphs created by 2 building sets}) such that $\underline{G}_i$ has some graph $\CK_j, j\in\{1,2,\dots, 8\}$, see Figure \ref{Figure: G1 to G8} or $\CK_j$ with additional edges attached as a subgraph. Take $N_{j}\subseteq N_i$ such that $G[N_{j}]$ is equal to $\CK_j$ or to $\CK_j$ with additional (hyper-)edges attached. According to Remark \ref{Remark: Underlying Connected Subgraph Remarks} $(1)$, the connected hypersubgraph arrangement $\CA_{G[N_{j}]}$ lies between $\CA_{\CK_j}$ and $\CA_{K_{n_j}}$ (where $n_j=\rank\CA_{\CK_j}$). By Lemma \ref{lemma: Induced subgraph is localization}, that arrangement is a localization of $\CA_G$ and by Theorem \ref{theorem: localizations are free}, freeness is a local property. So if $\CA_G$ is free, then $\CA_{G[N_{j}]}$ must be free. However, we show that no free connected hypersubgraph arrangements exist between $\CA_{\CK_j}$ (for $j\in\{1,2,\dots,8\}$) and $\CA_{K_n}$. In particular, $\CA_{G[N_{j}]}$ fails to be free and consequently, so does $\CA_G$. This shows that $\CA_G$ cannot be free if a $\underline{G}_i\in\mathscr{C}_G$ contains some $\CK_j$ as a subgraph.\\
\end{remark}

\begin{lemma}
    Let $G=(N,E)$ be a hypergraph and $\mathscr{C}_G$ as in Definition \ref{Definition: graphs created by 2 building sets}. If one of the $\underline{G}_i$ is equal to $\CK_1$ or $\CK_2$, then $\CA_G$ is not free.
\end{lemma}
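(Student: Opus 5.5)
Following the strategy of the preceding remark, I will reduce to a four-vertex localization. Suppose $\underline{G}_i=(N_i,\underline{E}_i)$ equals $\CK_1$ or $\CK_2$, so $\vert N_i\vert=4$. By Lemma \ref{lemma: Induced subgraph is localization}, $\CA_{G[N_i]}$ is a localization of $\CA_G$, and by Theorem \ref{theorem: localizations are free} it suffices to show that $\CA_{G[N_i]}$ is not free. By Remark \ref{Remark: Underlying Connected Subgraph Remarks} (1), $\CA_{\CK_j}\subseteq\CA_{G[N_i]}\subseteq\CA_{K_4}$. Note that $\CA_{\CK_2}=\CA_{K_4}$, since every nonempty subset of $[4]$ is connected in $K_4$. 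Also, the only subset of $[4]$ that is disconnected in $\CK_1$ is $\{2,4\}$. So in the case $\CK_2$ the localization is $\CA_{K_4}$ itself, and in the case $\CK_1$ it is $\CA_{\CK_1}$ or $\CA_{K_4}$.

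It therefore suffices to check that both $\CA_{\CK_1}$ and $\CA_{K_4}$ are non-free. Theorem \ref{theorem: FreeConnectedSubgraphArrangements} gives this immediately: $\CK_1$ and $K_4$ are connected graphs that are not paths, cycles, almost-path graphs, or path-with-triangle graphs. The main point to verify is this classification claim. $K_4$ and $\CK_1$ have $6$ and $5$ edges respectively, while every graph in the list on four vertices has at most $5$ edges. The only one with $5$ edges is $\Delta_{3,k}$. It has $3+2-1=4$ edges, not $5$, so there is no such candidate.

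To be safe, I would also give an independent check by factorization. Corollary \ref{cor_b_2} gives $b_2$ directly. For $\CA_{K_4}$ there are $15$ hyperplanes, and the $3$-circuits are the pairs $\{A,B\}$ of disjoint subsets with $A\cup B$ also in the set. Their number is $\frac12\#\{(A,B)\text{ disjoint nonempty}\}=\frac12(3^4-2\cdot2^4+1)=25$. Hence $b_2=105-25=80$.

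Freeness would require exponents $\{1,a,b,c\}$ with $a+b+c=14$ and
\[ab+ac+bc+(a+b+c)=80, \qquad\text{that is,}\qquad ab+ac+bc=66.\]
This does have solutions, for example $(4,5,5)$, so I would then compare $b_3$ and $b_4$, or simply rely on Cuntz--Kühne. Similarly, for $\CA_{\CK_1}$ one has $14$ hyperplanes; removing $\{2,4\}$ removes the circuits involving it, and one then checks factorization the same way.

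I expect this factorization check to be the hard step if done by hand. So the intended argument is the reduction to $\CA_{\CK_1}$ or $\CA_{K_4}$, followed by citing Theorem \ref{theorem: FreeConnectedSubgraphArrangements}.
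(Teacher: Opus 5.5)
Your main argument is correct and matches the paper's proof. You localize to $G[N_i]$ via Lemma~\ref{lemma: Induced subgraph is localization} and note that this gives $\CA_{\CK_1}$ or $\CA_{\CK_2}=\CA_{K_4}$; for $\CK_1$ it is in fact always exactly $\CA_{\CK_1}$, since $H_{24}$ could only come from the edge $\{2,4\}$, which would change $\underline{G}_i$. You then apply the Cuntz--Kühne classification, all of whose four-vertex members have at most $4$ edges; your edge-count sentence contradicts itself but reaches this conclusion.

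Your optional side check has a slip. The triple $(4,5,5)$ gives $ab+ac+bc=65$, not $66$. In fact $a^2+b^2+c^2=14^2-2\cdot 66=64<14^2/3$ has no real solution, so $b_2=80$ alone already rules out freeness of $\CA_{K_4}$. Likewise, for $\CA_{\CK_1}$ you get $b_2=\binom{14}{2}-21=70$, which forces $a^2+b^2+c^2=13^2-2\cdot 57=55<13^2/3$, again impossible.
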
\label{lemma: G1G2 subgraph totally non-free}
\begin{proof}
    Let $\underline{G}_i\in\mathscr{C}_G$ be equal to $\CK_1$ or $\CK_2$, and let $N_{a_i}\subseteq N$ be the corresponding elements of the ground set $N$. Taking the induced subgraph of $G$ on the vertex set $N_{a_i}$ yields precisely the connected subgraph arrangement of $\CK_1$ or $\CK_2$ (since $\CA_{\CK_2}$ is the connected subgraph arrangement coming from $K_4$ and $\CA_{\CK_1}$ is a deletion of it), hence $\CA_G$ fails to be free by Lemma \ref{lemma: Induced subgraph is localization}.
\end{proof}

\begin{lemma}\label{lemma: no free between G3,G4,G5,G6 and K_5}
    Let $\underline{G}_i = \CK_i$ for $i \in \{3,4,5,6\}$. There are no free $(0/1)$-arrangements between $\CA_{\underline{G}_i}$ and $\CA_{K_5}$.
\end{lemma}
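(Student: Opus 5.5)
The plan is to reduce the statement to a finite, explicit check. Every $(0/1)$-arrangement $\mathcal{B}$ with $\CA_{\CK_i}\subseteq\mathcal{B}\subseteq\CA_{K_5}$ is obtained from $\CA_{\CK_i}$ by adding some of the hyperplanes $H_I$, $I\subseteq[5]$, that are missing from $\CA_{\CK_i}$. Note that $\CA_{K_5}$ consists of all $31$ hyperplanes $H_I$ with $\emptyset\neq I\subseteq[5]$. For $\CK_3$ (the star $K_{1,4}$ centered at $1$), the missing sets are exactly the subsets of $\{2,3,4,5\}$ of size at least $2$, giving $11$ hyperplanes. For $\CK_4,\CK_5,\CK_6$ the number of missing sets is smaller (e.g.\ for $\CK_4$ the sets $\{2,3\}$ and $\{2,3,x\}$ become connected). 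So we are dealing with at most $2^{11}$ intermediate arrangements, and we would cut this down further by the symmetry group of $\CK_i$ (e.g.\ $S_4$ acting on $\{2,3,4,5\}$ for the star).

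\textbf{Step 1: combinatorial obstruction.} For each candidate $\mathcal{B}$, we would first try to exclude freeness without computing derivations. By Terao's Factorization Theorem, a free $\mathcal{B}$ in rank $5$ has $\pi(\mathcal{B},t)=(1+t)\prod_{j=1}^{4}(1+e_jt)$ with $1+\sum e_j=|\mathcal{B}|$. The coefficient $b_2$ can be computed from Corollary \ref{cor_b_2}, or more generally from Lemma \ref{lemma: connected subgraph arrangements are locallyA2}: each rank $2$ flat with three hyperplanes contributes $2$, and every other pair contributes $1$. This already bounds the possible exponents. We would also compare $b_3$, computed via the $\mathcal{B}_X$ for rank $3$ flats, to kill most cases.

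\textbf{Step 2: local obstruction.} For the remaining cases we would look for a non-free localization. A rank $3$ flat $X$ whose localization $\mathcal{B}_X$ is a generic arrangement of at least $4$ planes in rank $3$ is non-free by Theorem \ref{theorem: Yoshinaga generic not free}. Freeness is local by Theorem \ref{theorem: localizations are free}, so such a flat rules out $\mathcal{B}$. More generally, a rank $3$ localization whose characteristic polynomial fails to factor also rules it out. For the star, natural choices are flats cut out by hyperplanes like $H_{12},H_{13},H_{14},H_{1234}$ together with whichever of $H_{23},H_{234},\dots$ have been added.

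\textbf{Step 3: computer verification for the remaining cases.} Any candidate surviving Steps 1 and 2 would be checked by computer, e.g.\ by computing $D(\mathcal{B})$ and applying Saito's criterion (Theorem \ref{thm:saito}), or by a direct free-resolution computation. We expect the main obstacle to be exactly this residual family: there is no uniform reason visible a priori why adding hyperplanes like $H_{2345}$ cannot repair freeness. The honest proof is therefore likely a symmetry-reduced exhaustive enumeration, supported by a table of which obstruction kills each orbit. We would present the factorization and local criteria as the main argument and relegate the remaining cases to the computation.
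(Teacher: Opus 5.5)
\textbf{Gap: the proposal stops short of a proof, and the premise behind its enumeration is wrong.} Steps 1 and 2 are heuristics that are never shown to rule out any particular intermediate arrangement. Step 3 then defers everything that remains to a computation you have not carried out. An exhaustive check would be legitimate in principle, but it rests on a false premise: that there is ``no uniform reason'' why added hyperplanes such as $H_{2345}$ cannot restore freeness. There is one, and it is the whole argument: a localization $\mathcal{B}_X$ only involves the hyperplanes that contain $X$. The missing idea is to choose a rank-$3$ flat $X\in L(\CA_{\CK_i})$ with $(\CA_{K_5})_X=(\CA_{\CK_i})_X$ that is non-free. The equality means that none of the hyperplanes you might add contains $X$. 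Then $\mathcal{B}_X=(\CA_{\CK_i})_X$ for \emph{every} $\mathcal{B}$ with $\CA_{\CK_i}\subseteq\mathcal{B}\subseteq\CA_{K_5}$, and Theorem \ref{theorem: localizations are free} rules out all of them at once. Your Step 2 points the other way, choosing flats cut out by the added hyperplanes. For instance, $H_{12}\cap H_{13}\cap H_{14}\cap H_{1234}$ is the flat $x_1=x_2=x_3=x_4=0$. Its localization in $\CA_{\CK_3}$ is the free arrangement $\CA_{K_{1,3}}$ of an almost-path-graph, so it gives no obstruction even in the base case $\mathcal{B}=\CA_{\CK_3}$.

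\textbf{The missing argument.} For $\CK_3$ take $X=H_{124}\cap H_{125}\cap H_{134}$. For a form $a\alpha_{H_{124}}+b\alpha_{H_{125}}+c\alpha_{H_{134}}$, the coefficients of $x_5,x_3,x_2,x_4,x_1$ are $b,c,a+b,a+c,a+b+c$. Requiring all of them to lie in $\{0,1\}$ leaves, besides $(0,0,0)$, only $(a,b,c)\in\{(1,0,0),(0,1,0),(0,0,1),(-1,1,1)\}$. Hence $(\CA_{K_5})_X=\{H_{124},H_{125},H_{134},H_{135}\}\subseteq\CA_{\CK_3}$. All four index sets contain $1$, so no relation $A_{i_1}\dot{\cup}A_{i_2}=A_{i_3}$ as in Lemma \ref{lemma: connected subgraph arrangements are locallyA2} can occur. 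The localization is therefore generic of rank $3$ with $4$ hyperplanes, and it is not free by Theorem \ref{theorem: Yoshinaga generic not free}. Since $\CA_{\CK_3}\subseteq\CA_{\CK_4}\subseteq\CA_{\CK_5}$, the cases $\CK_4$ and $\CK_5$ follow immediately. (Incidentally, in $\CK_4$ the only newly connected set avoiding $1$ is $\{2,3\}$; the sets $\{2,3,4\}$ and $\{2,3,5\}$ remain disconnected.) For $\CK_6$ the flat $H_{123}\cap H_{125}\cap H_{134}$ works the same way, with $(\CA_{K_5})_X=\{H_{123},H_{125},H_{134},H_{145}\}$, all of whose index sets are connected in $\CK_6$. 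This is the paper's proof; it needs no Poincar\'e-polynomial bookkeeping and no computer verification.
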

\begin{proof}
Since $\CK_4$ and $\CK_5$ have $\CK_3$ as a subgraph and consequently $\CA_{\CK_3}$ is a subarrangement of $\CA_{\CK_4}$ and $\CA_{\CK_5}$, it is sufficient to consider $\CA_{\CK_3}$.\\
Let $X=(H_{124}\cap H_{125}\cap H_{134})\in L(\CA_{\CK_3})$, with $(\CA_{K_5})_X=(\CA_{\CK_3})_X=\{H_{124}, H_{125}, H_{134}, H_{135}\}$. This localization is a generic arrangement of rank $3$ in $\CA_{K_5}$. Therefore, the localization is totally non-free by Theorem \ref{theorem: Yoshinaga generic not free}. However, this arrangement is also a rank $3$ localization of $\CA_{\CK_3}$, since all hyperplanes are already included in $\CA_{\CK_3}$. This implies that every subarrangement of $\CA_{K_5}$ containing $\CA_{\CK_3}$ as a subarrangement contains this localization and is totally non-free since freeness is a local property.

Let $G=\CK_6$ and $X=(H_{123}\cap H_{125}\cap H_{134})\in L(\CA_{\CK_6})$. Then we get \[(\CA_{K_5})_X=(\CA_{\CK_6})_X=\{H_{123}, H_{125}, H_{134}, H_{145}\}\] which is a generic arrangement of rank $3$, allowing us to use the same argument as for $\CA_{\CK_3}$.
 \end{proof}

\begin{lemma}\label{lemma: G7 subgraph totally non-free}
    Let $G=(N,E)$ be a hypergraph and $\mathscr{C}_G$ as in Definition \ref{Definition: graphs created by 2 building sets}. If one of the $\underline{G}_i$ has $\CK_7$ as a subgraph, then $\CA_G$ is totally non-free.
\end{lemma}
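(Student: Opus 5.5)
The plan is to copy the strategy of Lemma \ref{lemma: no free between G3,G4,G5,G6 and K_5}: find a single rank $3$ localization of $\CA_{\CK_7}$ that is already the full localization of $\CA_{K_5}$ at the same flat, and show it is generic with $4$ hyperplanes. Label $\CK_7$ as in Figure \ref{Figure: G1 to G8}. Its edges are $\{1,5\},\{1,2\},\{2,3\},\{3,4\},\{1,4\}$, so it is the $4$-cycle $1\,2\,3\,4$ with the pendant vertex $5$ attached to $1$.

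First I reduce to the $5$-vertex case. Suppose some $\underline{G}_i$ contains $\CK_7$ as a subgraph on a vertex set $N_7\subseteq N_i$. By Remark \ref{Remark: Underlying Connected Subgraph Remarks} (1), the arrangement $\CA_{G[N_7]}$ lies between $\CA_{\CK_7}$ and $\CA_{K_5}$. By Lemma \ref{lemma: Induced subgraph is localization}, it is a localization of $\CA_G$. Since $(\CA,m)$ free implies $(\CA_X,m|_{\CA_X})$ free, total non-freeness passes from a localization up to the whole arrangement. It therefore suffices to show that every arrangement $\CB$ with $\CA_{\CK_7}\subseteq\CB\subseteq\CA_{K_5}$ is totally non-free.

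For that, I would list the connected vertex sets of $\CK_7$: all singletons, the five edges, the triples $123,234,134,124,125,145$, the quadruples $1234,1235,1245,1345$, and $12345$. Among these I look for four sets $A,B,C,D$ with $\chi_A+\chi_B=\chi_C+\chi_D$, where $\chi_A$ denotes the indicator vector of $A$. I want any three of them to be independent, and no further $0/1$ vector in their span $W$. Then $X=H_A\cap H_B\cap H_C$ satisfies $(\CA_{K_5})_X=(\CA_{\CK_7})_X=\{H_A,H_B,H_C,H_D\}$. By Lemma \ref{lemma: connected subgraph arrangements are locallyA2}, this localization is generic of rank $3$ with $4>3$ hyperplanes, so Theorem \ref{theorem: Yoshinaga generic not free} makes it totally non-free. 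Any $\CB$ in between has exactly this localization at $X$, which finishes the proof.

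The main obstacle is choosing the quadruple, and in particular checking that $W$ contains no extra $0/1$ vector. A first attempt $A=125$, $B=1345$, $C=1235$, $D=145$ fails: here $\chi_C-\chi_A=e_3$, so $H_3$ also lies in the localization. I would therefore search systematically for a pair of overlapping connected sets whose symmetric "exchange" produces two other connected sets. Parametrizing $W$ by the linear relations it imposes makes the $0/1$ check a finite inspection. The sets should be chosen so that no coordinate difference like $\chi_C-\chi_A$ is a $0/1$ vector, i.e.\ each of $A,B$ should lose and gain elements in the exchange, as in $124,125,134,135$ for $\CK_3$. Candidates built around the cycle, such as $\{12,\,34,\,23,\,14\}$-type exchanges enlarged by the vertex $5$ (e.g.\ $125,\,1345$ against $1235,\,145$ replaced by $1234$-based variants), are what I would test first.

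If no generic localization exists, the fallback is Lemma \ref{lemma: contraction of building set}. I would contract suitable edges to reach one of the configurations already handled in Lemma \ref{lemma: no free between G3,G4,G5,G6 and K_5}. In that case, however, one must keep track of the fact that the contraction of an arrangement between $\CA_{\CK_7}$ and $\CA_{K_5}$ still lies between the corresponding contracted arrangements.
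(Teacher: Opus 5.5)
\textbf{The proposal proves the wrong statement, and its strategy cannot work for the right one.}

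You are proving the lemma for the wrong graph. In Figure~\ref{Figure: G1 to G8}, the $4$-cycle $1\,2\,3\,4$ with pendant vertex $5$ at $1$ is $\CK_6$. That graph is already settled in Lemma~\ref{lemma: no free between G3,G4,G5,G6 and K_5} by exactly the kind of quadruple you were searching for, namely $\{H_{123},H_{125},H_{134},H_{145}\}$ with $\chi_{123}+\chi_{145}=\chi_{125}+\chi_{134}$; your proposal never actually produces one. The graph $\CK_7$ is the net on six vertices: the triangle $\{1,2,3\}$ with pendant edges $\{1,4\},\{2,5\},\{3,6\}$. The comparison arrangement is therefore $\CA_{K_6}$, not $\CA_{K_5}$.

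For the correct graph, your strategy provably fails. Suppose $\chi_A+\chi_B=\chi_C+\chi_D$. Then $M:=A\cap B=C\cap D$, and $A=M\sqcup P\sqcup Q$, $B=M\sqcup R\sqcup T$, $C=M\sqcup P\sqcup R$, $D=M\sqcup Q\sqcup T$. If any of $M,P,Q,R,T$ is empty, the span contains a further $0/1$ vector (e.g.\ $\chi_A+\chi_B$ if $M=\emptyset$, or $\chi_B-\chi_C=\chi_T$ if $P=\emptyset$). So closedness in $\CA_{K_6}$ requires all five to be nonempty. Each of $P,Q,R,T$ then lies in exactly two of the four sets, and these two sets contain different further blocks (e.g.\ $P$ lies in $A$ with $Q$ and in $C$ with $R$). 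By connectivity, a pendant vertex must therefore lie in the same block as its neighbour or have its neighbour in $M$; a pendant in $M$ forces its neighbour into $M$. Hence every block meets the three-element set $(\{1,2,3\}\setminus M)\cup\{\ell\text{ pendant}:\text{the neighbour of }\ell\text{ lies in }M\}$. That allows at most three blocks, a contradiction. Your fallback fails as well: $\CK_7$ is a minimal obstruction, so contracting its own edges only yields graphs from Theorem~\ref{theorem: FreeConnectedSubgraphArrangements}.

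The missing idea is the paper's: use a generic localization that is \emph{not} closed in $\CA_{K_6}$, and rule out the extra hyperplanes using the hypergraph structure. For $X=H_{23}\cap H_{136}\cap H_{1235}\cap H_{1245}$ one has $(\CA_{\CK_7})_X=\{H_{23},H_{136},H_{1235},H_{1245},H_{12346}\}$. This is generic of rank $4$, via $\chi_{12346}=\chi_{23}+\chi_{136}-\chi_{1235}+\chi_{1245}$. However, $(\CA_{K_6})_X$ additionally contains $H_{15},H_{24},H_{146}$.
\begin{itemize}
\item If $H_{15}$ or $H_{24}$ lies in $\CA_G$, then $\{1,5\}$ or $\{2,4\}$ is an edge, so some vertex has four neighbours. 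An induced hypersubgraph then lies between $\CA_{\CK_3}$ and $\CA_{K_5}$. Here the building-set property matters: e.g.\ $H_{15},H_{13}\in\CA_G$ force $H_{135}\in\CA_G$.
\item If $H_{146}\in\CA_G$, contracting $\{1,4\}$ via Lemma~\ref{lemma: contraction of building set} produces a configuration containing $\CK_6$.
\end{itemize}
In both cases Lemma~\ref{lemma: no free between G3,G4,G5,G6 and K_5} applies. If none of the three hyperplanes is present, $(\CA_G)_X$ is generic and Theorem~\ref{theorem: Yoshinaga generic not free} applies. This also shows that your reduction to \emph{arbitrary} arrangements between $\CA_{\CK_7}$ and $\CA_{K_n}$ asks for more than the paper proves. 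It throws away precisely the closure properties the argument relies on.
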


\begin{proof}
    By Lemma \ref{lemma: Induced subgraph is localization}, it is sufficient to choose $N_1\subseteq N$ such that $G[N_1]$ has $\CK_7$ as a subgraph and prove the statement for $G=G[N_1]$. Assume that $\CA_G$ is free.
    Let $X=(H_{23}\cap  H_{136}\cap H_{1235}\cap H_{1245})\in L(\CA_{\CK_7}),$ then we get the localization
    $(\CA_{\CK_7})_X=\{H_{23}, H_{136}, H_{1235}, H_{1245}, H_{12346}\},$ which is a generic arrangement of rank $4$. The localization of $\CA_{K_6}$ at $X$ is $$(\CA_{K_6})_X=\{H_{15}, H_{23}, H_{24}, H_{136}, H_{146}, H_{1235}, H_{1245}, H_{12346}\}$$
    and we compute $(\CA_{K_6})_X\setminus(\CA_{\CK_7})_X=\{H_{15}, H_{24}, H_{146}\}$.
    If $\CA_G$ is free, then at least one of the elements of $(\CA_{K_6})_X\setminus(\CA_{\CK_7})_X$ has to be included in $\CA_G$ (otherwise the existence of $\CA_X$ contradicts the freeness of $\CA_G$). However, if one of the edges $\{1,5\}$ or $\{2,4\}$ is part of $G$, then $G$ has a vertex $v$ with at least four neighbors. Taking the induced subgraph on $v$ and four of its neighbors thus creates a hypergraph between $\CK_3$ and $K_5$. By Lemma \ref{lemma: no free between G3,G4,G5,G6 and K_5} this contradicts the freeness of $\CA_G$. Finally, assume that $\{1,4,6\}$ is an edge of $G$. Contracting the edge $\{1,4\}$ transforms the hyperedge $\{1,4,6\}$ into the edge $\{4,6\}$, forcing $G$ to have $\CK_6$ as a subgraph and Lemma \ref{lemma: no free between G3,G4,G5,G6 and K_5} contradicts the freeness of $\CA_G$.
\end{proof}

\begin{lemma}\label{lemma: G8 subgraph totally non-free}
    Let $G=(N,E)$ be a hypergraph and let $\mathscr{C}_G$ be as in Definition \ref{Definition: graphs created by 2 building sets}. If one of the $\underline{G}_i$ has $\CK_8$ as a subgraph, then $\CA_G$ is totally non-free.
\end{lemma}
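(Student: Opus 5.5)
The plan follows the proof of Lemma \ref{lemma: G7 subgraph totally non-free}. Label $\CK_8$ as in Figure \ref{Figure: G1 to G8}: the centre $1$ has three legs $1\!-\!2\!-\!3$, $1\!-\!4\!-\!5$ and $1\!-\!6\!-\!7$. By Lemma \ref{lemma: Induced subgraph is localization} I may replace $G$ by $G[N_1]$, where $N_1$ is the vertex set of a copy of $\CK_8$ inside some $\underline{G}_i$. So I assume $N=[7]$ and that $G$ contains the seven edges of $\CK_8$. Then $\CA_{\CK_8}\subseteq \CA_G\subseteq \CA_{K_7}$. Suppose $\CA_G$ is free (or, more generally, that $(\CA_G,m)$ is free for some $m$).

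The first step is to exhibit $X\in L(\CA_{\CK_8})$ such that $(\CA_{\CK_8})_X$ is a generic arrangement with more hyperplanes than its rank. I would search among connected subsets of $\CK_8$ that pairwise overlap but have no disjoint-union relations, since Lemma \ref{lemma: connected subgraph arrangements are locallyA2} forbids rank-$2$ triples. Natural candidates are the sets through the centre picking two or three legs. Examples are $\{1,2,3,4\}$, $\{1,4,5,6\}$, $\{1,6,7,2\}$, together with one or two further sets such as $\{2,3\}$, $\{1,4\}$ or $\{1,2,3,4,5,6,7\}$, aiming for five hyperplanes of rank $4$ in general position. I would fix the candidate by a direct rank computation on the $0/1$ normal vectors. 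Next I would compute $(\CA_{K_7})_X$, i.e.\ all $0/1$ vectors in the span of the chosen normals, and list the difference $(\CA_{K_7})_X\setminus(\CA_{\CK_8})_X$. Each hyperplane in this difference is $H_I$ for a set $I$ that is not connected in $\CK_8$. By Theorem \ref{theorem: Yoshinaga generic not free} and Theorem \ref{theorem: localizations are free}, at least one such $H_I$ must lie in $\CA_G$; otherwise $(\CA_G)_X=(\CA_{\CK_8})_X$ would be generic and totally non-free.

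The second step is a case analysis over the sets $I$ in this difference. For each $I$, $G[I]$ is connected, so $G$ contains a (hyper)edge meeting two different components of $\CK_8[I]$. I split into cases.
\begin{enumerate}
\item If that edge is a genuine $2$-edge, it either gives the centre or another vertex a fourth neighbour, or it closes a cycle through $1$. In the first case the induced hypergraph on that vertex and four neighbours lies between $\CA_{\CK_3}$ and $\CA_{K_5}$, which Lemma \ref{lemma: no free between G3,G4,G5,G6 and K_5} excludes. In the second case, contracting edges of the cycle (Lemma \ref{lemma: contraction of building set}) produces $\CK_7$ (a cycle with a pendant path) or one of $\CK_3,\dots,\CK_6$, excluded by Lemma \ref{lemma: G7 subgraph totally non-free} or Lemma \ref{lemma: no free between G3,G4,G5,G6 and K_5}.
\item If the connecting edge is a genuine hyperedge $e$, I would contract a suitable $\CK_8$-edge contained in $e$, as with $\{1,4,6\}$ in the proof of Lemma \ref{lemma: G7 subgraph totally non-free}. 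This turns $e$ into an ordinary edge of the contracted hypergraph, whose underlying graph then contains one of $\CK_3,\dots,\CK_7$. Alternatively, I would pass to an induced subhypergraph in which $e$ gives a vertex four neighbours in the sense of adjacency, reducing again to Lemma \ref{lemma: no free between G3,G4,G5,G6 and K_5}.
\end{enumerate}
Since freeness of multiarrangements is also local, every case contradicts the assumption, which yields total non-freeness.

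The main obstacle is the choice of $X$. It must satisfy two conditions at once. First, $(\CA_{\CK_8})_X$ must be generic. Second, the complement $(\CA_{K_7})_X\setminus(\CA_{\CK_8})_X$ must be small and consist only of sets whose completion forces a forbidden configuration. The latter condition keeps the case analysis finite and uniform. I expect to try a few symmetric candidates, exploiting the $S_3$ symmetry permuting the legs, before finding one where every extra set $I$ is handled by a single contraction or induced subgraph. The genuine-hyperedge subcase may require the most care, because a hyperedge like $\{3,5,7\}$ avoids the centre. For such an edge I would contract $\{1,2\}$ and $\{1,4\}$ so that $e$ becomes adjacent to vertices of degree already three.
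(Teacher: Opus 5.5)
Your first step (a generic localization of $\CA_{\CK_8}$, which forces one of the missing hyperplanes of $(\CA_{K_7})_X$ into $\CA_G$) is the paper's strategy. However, the step that carries the whole lemma is missing: you never exhibit $X$, and the candidates you suggest fail. Start from $\{1,2,3,4\},\{1,4,5,6\},\{1,2,6,7\}$ and add $\{2,3\}$, $\{1,4\}$ or $[7]$. This either keeps the normals linearly independent, or creates disjoint-union triples such as $\{1,2,3,4\}=\{2,3\}\sqcup\{1,4\}$. In the independent case, for instance at $H_{1234}\cap H_{1456}\cap H_{1267}\cap H_{1234567}$, the only other $0/1$ vectors in the span are those of $\{3,4,5\},\{5,6,7\},\{2,3,7\}$, all disconnected in $\CK_8$, so the localization is Boolean. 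The triples are rank-$2$ dependencies by Lemma~\ref{lemma: connected subgraph arrangements are locallyA2} and destroy genericity. What works is to take all six sets formed by $1$, two of the inner vertices $2,4,6$, and an outer vertex attached to one of those two. For $X=H_{1234}\cap H_{1236}\cap H_{1245}\cap H_{1267}\cap H_{1456}$ one gets $(\CA_{\CK_8})_X=\{H_{1234},H_{1236},H_{1245},H_{1267},H_{1456},H_{1467}\}$ of rank $5$. Its only relation, $\alpha_{H_{1234}}-\alpha_{H_{1236}}-\alpha_{H_{1245}}+\alpha_{H_{1267}}+\alpha_{H_{1456}}-\alpha_{H_{1467}}=0$, involves all six forms, so it is generic. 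The $0/1$ vectors in the span are exactly the indicators of $\{1\}\cup\{\text{two of } 2,4,6\}\cup\{\text{one of } 3,5,7\}$. Hence $(\CA_{K_7})_X\setminus(\CA_{\CK_8})_X=\{H_{1247},H_{1256},H_{1346}\}$ (the paper prints $H_{1237}$ here, a misprint for $H_{1247}$).

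Given this $X$, the paper needs no graph-theoretic case analysis. Each extra hyperplane lies in a rank-$3$ flat at which $\CA_{K_7}$ has exactly four hyperplanes, itself and three of $\CA_{\CK_8}$, in general position: $\{H_{1245},H_{1247},H_{1456},H_{1467}\}$, $\{H_{1256},H_{1267},H_{1456},H_{1467}\}$ and $\{H_{1236},H_{1267},H_{1346},H_{1467}\}$. So the mere presence of $H_I$ in $\CA_G$ yields a generic localization, whatever (hyper)edge makes $G[I]$ connected. Theorem~\ref{theorem: Yoshinaga generic not free} plus locality of free multiarrangements then finishes. Your second step, as sketched, also has a hole. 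A hyperedge like $\{2,4,7\}$ contains no $\CK_8$-edge. Adjacency via hyperedges also does not place you between $\CA_{\CK_3}$ and $\CA_{K_5}$, because the generic localization in Lemma~\ref{lemma: no free between G3,G4,G5,G6 and K_5} needs the sets $\{1,a,b\}$ to be connected, i.e.\ genuine $2$-edges. With the correct $X$ it can be repaired uniformly. For $I=\{1,2,4,7\}$, contracting $\{1,2\}$ and $\{1,4\}$ turns every (hyper)edge inside $I$ that connects $7$ to $\{1,2,4\}$ into $\{1,7\}$, which gives $1$ the four neighbours $3,5,6,7$; the other two sets follow by permuting the legs. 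This is extra work that the localization argument avoids.
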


\begin{proof}
    By Lemma \ref{lemma: Induced subgraph is localization}, it is sufficient to choose $N_1\subseteq N$ such that $G[N_1]$ has $\CK_8$ as a subgraph and prove the statement for $G=G[N_1]$. 
    Let $X=(H_{1234}\cap H_{1236}\cap H_{1245}\cap H_{1267}\cap H_{1456})\in L(\CA_{\CK_8})$, then we get the localization
    $(\CA_{\CK_8})_X=\{H_{1234}, H_{1236}, H_{1245}, H_{1267}, H_{1456}, H_{1467}\},$ which is a generic arrangement of rank $5$. The localization of $\CA_{K_7}$ at $X$ is $$(\CA_{K_7})_X=\{H_{1234}, H_{1237}, H_{1236}, H_{1245}, H_{1256}, H_{1267}, H_{1346}, H_{1456}, H_{1467}\},$$
    and thus, $(\CA_{K_7})_X\setminus(\CA_{\CK_8})_X=\{H_{1237}, H_{1256}, H_{1346}\}$.
    So if $\CA_G$ is free, then at least one of the elements of $(\CA_{K_7})_X\setminus(\CA_{\CK_8})_X$ must be included in $\CA_G$ (otherwise $\CA_X$ contradicts the freeness of $\CA_G$). First, assume that $H_{1237}\in\CA_G$ and define $X_1=(H_{1234}\cap H_{1237}\cap H_{1246})$. Then $$(\CA_G)_{X_1}=(\CA_{K_7})_{X_1}=\{H_{1234}, H_{1237}, H_{1246}, H_{1267}\},$$  which is a generic localization of rank $3$. If $H_{1256}\in\CA_G$, we permute the indices of all hyperplanes in $(\CA_G)_{X_1}$ through the permutation $(2 6)(3 4 5)$ and if $H_{1346}\in\CA_G$ we permute the indices of all hyperplanes in $(\CA_G)_{X_1}$ through the permutation $(2 6 7)$. We end up at the generic localizations (both of which have exactly three hyperplanes that are already part of $\CA_{\CK_8}$) $$\{H_{1256}, H_{1267}, H_{1456}, H_{1467}\}\text{ or }\{H_{1236}, H_{1267}, H_{1346}, H_{1467}\}, \text{ respectively}.$$ These are generic localizations of rank $3$ in $\CA_{K_7}$. We conclude that there cannot be any connected hypersubgraph arrangements without a generic localization between $\CA_{\CK_8}$ and $\CA_{K_7}$.
\end{proof}

For the following result we use the same notation as in Definition \ref{Definition: graphs created by 2 building sets}.

\begin{proposition}\label{Proposition: underlying connected subgraph arrangment is free}
    Let $G=(N,E)$ be a hypergraph such that the connected hypersubgraph arrangement $\CA_G$ is free, and let $\CA_{\underline{G}} = \CA_{\underline{G}_1}\times\CA_{\underline{G}_2}\times\dots\times\CA_{\underline{G}_k}$ be the underlying connected subgraph arrangement. Then every $\underline{G}_i\in\mathscr{C}_G$ is either a path-graph, an almost-path-graph, a path-with-triangle-graph, or a cycle-graph.\\
\end{proposition}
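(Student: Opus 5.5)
Fix a component $\underline{G}_i=(N_i,\underline{E}_i)\in\mathscr{C}_G$. The plan is to reduce the proposition to the graph-theoretic core of the Cuntz--Kühne classification, Theorem \ref{theorem: FreeConnectedSubgraphArrangements}. In \cite{cuntzkuehne:subgrapharrangements}, the graphs $\CK_1,\dots,\CK_8$ are exactly the obstructions: a connected simple graph that is not a path, cycle, almost-path or path-with-triangle graph contains, after taking induced subgraphs and contracting edges, one of the $\CK_j$ as a subgraph. I will take this graph-theoretic statement from their proof as given. It is purely combinatorial, so it applies verbatim to $\underline{G}_i$.

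So assume $\underline{G}_i$ is none of the four types. Then there are a vertex set $M\subseteq N_i$ and a sequence of edges $f_1,\dots,f_r\in\underline{E}_i$ such that contracting them in $\underline{G}_i[M]$ yields a graph containing some $\CK_j$ as a subgraph. I mirror this inside $G$. By Lemma \ref{lemma: Induced subgraph is localization}, $\CA_{G[M]}$ is a localization of $\CA_G$ and is therefore free. Since each $f_l$ has size two, it is an edge of $G$ as well. By Lemma \ref{lemma: contraction of building set}, contracting $f_1,\dots,f_r$ in the hypergraph $G[M]$ again gives a free arrangement. Call the resulting hypergraph $G'$.

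Next I check that the two-element edges of $G'$ contain those of the contracted graph. An edge $\{u,v\}$ of $\underline{G}_i[M]$ is an edge of $G[M]$, and a hypergraph contraction sends it to a two-element edge or collapses it, exactly as in the graph case. Hence $\underline{G'}$ contains $\CK_j$ as a subgraph. Some of its hyperedges may have shrunk to size two, but that only adds edges, and the remark before Lemma \ref{lemma: G1G2 subgraph totally non-free} explicitly allows $\CK_j$ with additional edges. Passing to the induced hypersubgraph on $V(\CK_j)$, which is again a localization, gives a connected hypersubgraph arrangement lying between $\CA_{\CK_j}$ and $\CA_{K_{n_j}}$ (Remark \ref{Remark: Underlying Connected Subgraph Remarks}(1)).

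Lemmas \ref{lemma: G1G2 subgraph totally non-free}--\ref{lemma: G8 subgraph totally non-free} now show that this arrangement is not free, which contradicts freeness of $\CA_G$. For $\CK_1$ and $\CK_2$, Lemma \ref{lemma: G1G2 subgraph totally non-free} is phrased for $\underline{G}_i$ equal to $\CK_j$. Its argument still works when $\CK_1\subseteq \underline{G'}$ on 4 vertices: the induced arrangement contains either $\CA_{\CK_1}$ or $\CA_{K_4}=\CA_{\CK_2}$. Every 3-subset is then connected, so the arrangement equals $\CA_{\CK_1}$ or $\CA_{\CK_2}$, both of which are non-free by Cuntz--Kühne.

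The main obstacle I expect is the step that extracts the obstruction statement from Cuntz--Kühne. Their argument identifies the minimal non-free graphs, using induced subgraphs and edge contractions, as exactly $\CK_1,\dots,\CK_8$, and it uses no other operations. I must make sure this is the case, and that hyperedges appearing after contraction never destroy a needed subgraph. They cannot, because adding edges only enlarges the arrangement within the $K_n$ bound.
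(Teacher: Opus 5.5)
Your proposal is correct and follows essentially the paper's route. Both arguments use induced hypersubgraphs and contractions of $2$-element edges (which act on $\underline{G}$ exactly as graph contractions, while shrunken hyperedges only add edges) to reach a hypergraph whose underlying graph contains some $\CK_j$, and then apply the obstruction lemmas for $\CK_1,\dots,\CK_8$. The only difference is that the paper does not cite the graph-theoretic reduction from Cuntz and K\"uhne as a black box: it re-runs their case analysis explicitly (a vertex with four neighbours, two cycles sharing or not sharing an edge, a unique cycle of length at least $4$, a unique triangle, and trees), and that case analysis is precisely the combinatorial statement you take as given.
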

\begin{proof}
    We proceed analogously to \cite[Theorem~6.7]{cuntzkuehne:subgrapharrangements}. Let $G=(N,E)$ be a hypergraph such that $\CA_G$ is free and let $\underline{G}_i\in\mathscr{C}_G$ be arbitrary.\\
    
    \noindent 1. If $\underline{G}_i$ has a vertex with four neighbors, it has $\CK_3$ as a subgraph and cannot be free because of Lemma \ref{lemma: no free between G3,G4,G5,G6 and K_5}, which contradicts our assumption.\\
    
    \noindent 2. Assume that $\underline{G}_i$ has two cycles $C_1,C_2$ of at least length $3$ that share at least one edge $\{i_1,i_2\}$. Let $i_3$ be a vertex of $C_1$ but not $C_2$, and let $i_4$ be a vertex of $C_2$ but not $C_1$ (such vertices exist since the length of the cycles is greater than 3 and the cycles are distinct). Now contract $\CA_G$ along all edges of $\underline{G}_i$ (i.e., sets with two elements of $G$)  until only edges containing exactly two elements of $\{i_1,i_2,i_3,i_4\}$ remain. This graph is equal to $\CK_1$ or $\CK_2$, and we use Lemma \ref{lemma: G1G2 subgraph totally non-free} to derive that $\CA_G$ is not free, which would contradict our assumption.\\
    
    \noindent 3. Assume that $\underline{G}_i$ has two cycles that do not share an edge. Contracting all edges until the cycles share a vertex, the resulting graph has $\CK_3$ as a subgraph again and cannot be free by Lemma \ref{lemma: no free between G3,G4,G5,G6 and K_5}.\\
    
    \noindent 4. Assume that $\underline{G}_i$ has exactly one cycle of length at least $4$. Then $\underline{G}_i$ is either a cycle-graph (and $\CA_{\underline{G}_i}$ is free because of Theorem \ref{theorem: FreeConnectedSubgraphArrangements}) or has $\CK_6$ as a subgraph after contracting some edges and fails to be free by Lemma \ref{lemma: no free between G3,G4,G5,G6 and K_5}.\\
    
    \noindent 5. Let $\{i_1,i_2,i_3\}$ be the unique cycle in $\underline{G}_i$. If the degree of all three vertices is $3$, then $\underline{G}_i$ has $\CK_7$ as a subgraph and cannot be free because of Lemma \ref{lemma: G7 subgraph totally non-free}. So at least one of the three vertices (let us say $i_3$) has degree $2$. If $\underline{G}_i$ had a vertex $i_4$ of degree $3$ different from $i_1$ and $i_2$, one could obtain a graph with a vertex of degree at least $4$ by contracting all edges between $i_1$ and $i_4$
    or $i_2$ and $i_4$. This again contradicts Lemma \ref{lemma: no free between G3,G4,G5,G6 and K_5}.  If $\underline{G}_i$ has another vertex $i_4$ of degree $3$, then contracting edges will give us $\CK_3$ as a subgraph, and therefore contradict the freeness of $\CA_G$ by Lemma \ref{lemma: no free between G3,G4,G5,G6 and K_5}. This implies that every vertex different from $i_1$ and $i_2$ has degree $2$, which forces $\underline{G}_i$ to be a path-with-triangle-graph.\\
    
    \noindent 6. Now let $\underline{G}_i$ be a tree. If $\underline{G}_i$ has two vertices of degree $3$, contracting all edges between them will result in an obstruction to the freeness of $\CA_G$ through Lemma $\ref{lemma: no free between G3,G4,G5,G6 and K_5}$.\\
    
    \noindent 7. If $\underline{G}_i$ is not a path-graph, we consider the unique vertex $i_1$ of $\underline{G}_i$ of degree $3$. If all of its neighbors have degree $2$, then $\underline{G}_i$ has $\CK_8$ as a subgraph, which contradicts the freeness of $\CA_G$ by Lemma \ref{lemma: G8 subgraph totally non-free}. So $\underline{G}_i$ is either a path-graph or an almost-path-graph, meaning it has to be equal to one of the graphs of Theorem \ref{theorem: FreeConnectedSubgraphArrangements}, which finishes our proof.
    \end{proof}

Combining Proposition \ref{Proposition: underlying connected subgraph arrangment is free} with Theorem \ref{theorem: FreeConnectedSubgraphArrangements} we derive Theorem \ref{theorem: A_B2 is free}.

\begin{example}\label{ex:freeness_condition}
The converse statement is not true. In Figure \ref{fig:ex_4_nonfree_B2free}, there are three connected hypersubgraph arrangements whose underlying connected subgraph arrangement is free, yet they themselves are not. 
\begin{figure}
\centering
		\includegraphics[width=0.5\textwidth]{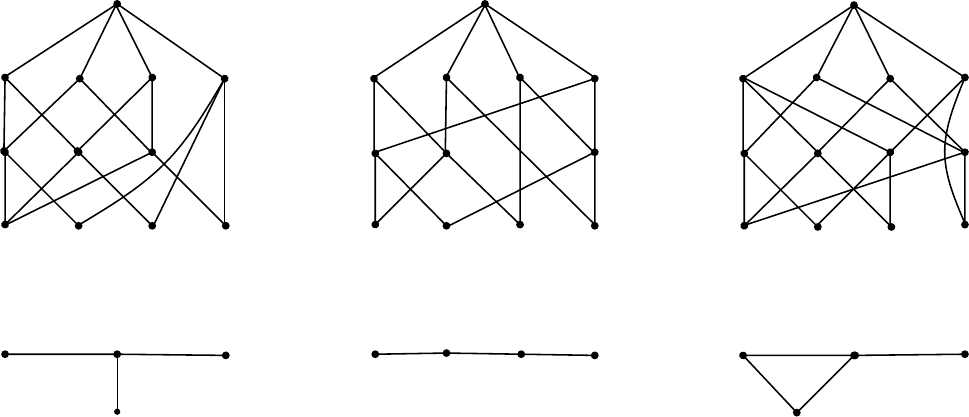} 
		\caption{The posets of the edges of two graphs and their underlying $B_2$-graph. The corresponding arrangements are not free.} 
		\label{fig:ex_4_nonfree_B2free}
\end{figure}
\end{example}

\subsection{Adding additional hyperplanes to free connected subgraph arrangements}
Let $G=(N,E)$ be a hypergraph and assume that $\CA_G$ is free. We know by Theorem \ref{theorem: A_B2 is free} that the underlying connected subgraph arrangement has to be free. So if $\CA_{\underline{G}}= \CA_{\underline{G}_1}\times\CA_{\underline{G}_2}\times\dots\times\CA_{\underline{G}_k}$ is the underlying connected subgraph arrangement with $\underline{G}_i=(N_i,\underline{E}_i)$, then $\CA_{G[N_i]}$ is a connected hypersubgraph arrangement between $\CA_{\underline{G}_i}$ and $\CA_{K_n}$. In this section we will investigate which restrictions we can impose on the hyperedges being contained in $\CA_{G[N_i]}$, if we want $\CA_G$ to be free. 

We look at a specific type of hypergraph that will become a key localization in our proofs. 

\begin{figure}[H]
\scalebox{0.8}{
\begin{tikzpicture}[node distance={15mm}, thick, main/.style = {draw, circle}] 
\filldraw[ color=red!60, fill=white!1, thick](2,0) ellipse (3 and 0.6);
\node[main] (1) {$1$}; 
\node[main] (2) [above right of=1] {$2$}; 
\node[main] (3) [below right of=2] {$3$}; 
\node[main] (4) [above right of=3] {$4$}; 
\node[main] (5) [below right of=4] {$5$}; 
\draw[-] (1) to (2); 
\draw[-] (2) to (3); 
\draw[-] (3) to (4); 
\draw[-] (4) to (5); 
\end{tikzpicture}}
\caption{The path-graph $P_5$ with one hyperedge added. The corresponding connected
	hypersubgraph arrangement is totally non-free.}\label{fig:graph on 5}
\end{figure}

\begin{lemma}\label{lemma: three connected components generic loca}
    Let $\CA=\{H_{123},H_{234},H_{345},H_{135}\}$, then $\CA$ is a generic arrangement and a localization of rank $3$ in $\CA_{K_5}$. If $\CA_G$ is a connected hypersubgraph arrangement with $\CA$ as a localization, then $\CA_G$ is totally non-free. In particular, if $G$ is a graph with the graph from Figure \ref{fig:graph on 5} as a subgraph, then $\CA_G$ is totally non-free.
\end{lemma}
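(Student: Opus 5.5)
We first establish the two claims about $\CA$ by direct computation and then push total non-freeness up to $\CA_G$ via localization. Put $X := H_{123}\cap H_{234}\cap H_{345}$ in $\mathbb{Q}^5$. The forms $x_1+x_2+x_3$, $x_2+x_3+x_4$, $x_3+x_4+x_5$ are linearly independent, so $\rank X = 3$. Moreover
\[x_1+x_3+x_5 = (x_1+x_2+x_3)-(x_2+x_3+x_4)+(x_3+x_4+x_5),\]
so $H_{135}\supseteq X$.

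To see that $(\CA_{K_5})_X=\CA$, we determine every $(0/1)$-form in the span of these three forms. A general element $a(x_1+x_2+x_3)+b(x_2+x_3+x_4)+c(x_3+x_4+x_5)$ has coefficient vector $(a,\,a+b,\,a+b+c,\,b+c,\,c)$. Requiring all entries to lie in $\{0,1\}$ forces $a,c\in\{0,1\}$ and $b\in\{-1,0,1\}$. A short case check then leaves exactly the nonzero solutions
\[(a,b,c)\in\{(1,0,0),\,(0,1,0),\,(0,0,1),\,(1,-1,1)\}.\]
These correspond to $H_{123},H_{234},H_{345},H_{135}$, and every subset of $[5]$ appears in $\CA_{K_5}$. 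Hence $(\CA_{K_5})_X=\CA$.

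For genericity, we need every rank-$2$ flat below $X$ to lie on exactly two of the hyperplanes. By Lemma \ref{lemma: connected subgraph arrangements are locallyA2}, three hyperplanes of a $(0/1)$-arrangement meet in rank $2$ only if one index set is the disjoint union of two others. All four index sets here have size $3$, so this is impossible and $\CA$ is generic. Since $\rank\CA = 3 = \dim$ and $|\CA|=4>3$, Theorem \ref{theorem: Yoshinaga generic not free} (applied after passing to the essentialization) shows that $(\CA,m)$ is non-free for every multiplicity $m$.

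Now let $\CA_G$ be any connected hypersubgraph arrangement in $\mathbb{Q}^n$ with $\CA$ as a localization. Concretely, this means $H_{123},H_{234},H_{345},H_{135}\in\CA_G$, up to relabeling of vertices. Take $X'$ to be the intersection of these hyperplanes in $\mathbb{Q}^n$. The coordinates $x_6,\dots,x_n$ are unconstrained on $X'$, so the argument of Lemma \ref{lemma: Induced subgraph is localization} shows that any $H_I\supseteq X'$ has $I\subseteq[5]$. The computation above then gives $(\CA_G)_{X'}=\CA$.

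Suppose $(\CA_G,m)$ were free for some $m$. Localization preserves freeness of multiarrangements (the multiarrangement analogue of Theorem \ref{theorem: localizations are free}), so $(\CA,m|_\CA)$ would be free, which is a contradiction. Hence $\CA_G$ is totally non-free.

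For the final claim, the hyperedge drawn in Figure \ref{fig:graph on 5} is $\{1,3,5\}$. In the hypergraph of the figure, the induced hypersubgraphs on $\{1,2,3\}$, $\{2,3,4\}$ and $\{3,4,5\}$ are paths, and the induced hypersubgraph on $\{1,3,5\}$ contains the hyperedge $\{1,3,5\}$, so all four are connected. Adding further (hyper)edges to $G$ can only enlarge edge sets of induced hypersubgraphs, so it preserves their connectivity. Therefore all four hyperplanes lie in $\CA_G$, and the previous paragraph applies.

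The main obstacle I expect is the localization step in the general case. It requires the exhaustive enumeration of $(0/1)$-vectors in the span, including the observation that vertices outside $[5]$ contribute nothing. It also requires justifying that localization preserves freeness for multiarrangements, which the paper states only in the simple case and which I would cite from the literature.
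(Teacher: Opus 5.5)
Your proof is correct and follows the paper's argument. You use the same linear relation among the four forms and the identification $(\CA_{K_5})_X=\CA$, for which you supply the enumeration of $(0/1)$-vectors in the span that the paper only asserts. You then get genericity from Lemma~\ref{lemma: connected subgraph arrangements are locallyA2} and total non-freeness from Theorem~\ref{theorem: Yoshinaga generic not free} together with locality, as the paper does.

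The only deviation is in the final claim. You check directly that all four index sets induce connected hypersubgraphs and localize at their intersection in $\mathbb{Q}^n$, whereas the paper contracts $G$. Your route is slightly more direct and does not need Lemma~\ref{lemma: contraction of building set}.
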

\begin{proof}
    We have $-\alpha_{H_{123}}+\alpha_{H_{135}}+\alpha_{H_{234}}=\alpha_{H_{345}}$, which shows that for $X=H_{123}\cap H_{135}\cap H_{234}$, we have $\mathcal{A}\subseteq(\CA_{K_5})_X$. As no other $\alpha_{H_I}$ for $H_I\in\CA_{K_5}$ lies in the span of these linear forms, we conclude $\mathcal{A}=(\CA_{K_5})_X$. It is left to show that $\CA$ is generic. Since for all choices of $H_I,H_J\in\CA$ we have $I\cap J\neq \emptyset$ and $I\subsetneq J$, it follows from Lemma \ref{lemma: connected subgraph arrangements are locallyA2} for $X=H_I\cap H_J\in L(\CA_{K_5})$ that $\vert\CA_X\vert=2$. This shows that $\CA$ is a generic arrangement.
     For the second part, if $G$ has the graph from Figure \ref{fig:graph on 5} as subgraph, then we can contract $G$ to this graph and immediatly recover $\CA$ as a subarrangement, and thus as a localization of $\CA_G$. 
\end{proof}
\begin{remark}
The proofs in this section are based on the following idea: We start at some connected hypergraph $G$ with underlying graph $\underline{G}$ and contract hyperedges until we either transform $\underline{G}$ in a way that creates one of the forbidden graphs (see Theorem \ref{theorem: A_B2 is free}) or we find a non-free localization like the one in Lemma \ref{lemma: three connected components generic loca}. Lemmas \ref{lemma: Induced subgraph is localization} and \ref{lemma: contraction of building set} show that the non-free arrangement is a localization of our original arrangement, which cannot be free by Theorem \ref{theorem: localizations are free}.
\end{remark}

\begin{lemma}\label{Lemma: three connected components totally non-free PnCn}
    Let $\underline{G}=(N,E)$ be equal to $P_n$ or $C_n$, where $n\geq 5$. Let $e\subsetneq N$ with $H_e\not\in\CA_{\underline{G}}$, and let $\CA$ be a connected hypersubgraph arrangement between $\CA_{\underline{G}}$ and $\CA_{K_n}$ with $H_e\in\CA$. \\
    If $\underline{G}[e]$ has at least three connected components, then $\CA$ is totally non-free. 
\end{lemma}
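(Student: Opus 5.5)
The plan is to reduce to Lemma \ref{lemma: three connected components generic loca}: by contracting edges of $\underline{G}$ (and deleting vertices via induced subgraphs), we aim to reach a hypergraph on five vertices that contains the path $P_5$ together with the hyperedge $\{1,3,5\}$, as in Figure \ref{fig:graph on 5}. Once there, that lemma shows the contracted arrangement has the generic rank $3$ localization $\{H_{123},H_{234},H_{345},H_{135}\}$ inside $\CA_{K_5}$. By Lemmas \ref{lemma: Induced subgraph is localization} and \ref{lemma: contraction of building set}, this is then also a localization of $\CA$. Theorem \ref{theorem: Yoshinaga generic not free} gives total non-freeness of that localization. Since $D(\CA,m)$ free implies its localizations are free (the multiarrangement analogue of Theorem \ref{theorem: localizations are free}), $\CA$ is totally non-free.

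\textbf{The construction.} Since $\underline{G}$ is $P_n$ or $C_n$, the components of $\underline{G}[e]$ are vertex-disjoint subpaths (arcs) $Q_1,\dots,Q_r$ with $r\geq 3$. They are separated by nonempty gaps $R_1,\dots$ of vertices not in $e$. Here $e\subsetneq N$ guarantees gaps exist in the cycle case. In the path case $r\geq 3$ forces at least two internal gaps.

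Choose three consecutive arcs $Q_1,Q_2,Q_3$ with internal gaps $R_1$ (between $Q_1,Q_2$) and $R_2$ (between $Q_2,Q_3$). Take the induced hypersubgraph on $M=Q_1\cup R_1\cup Q_2\cup R_2\cup Q_3\cup (e\setminus(Q_1\cup Q_2\cup Q_3))$. Keeping all of $e$ ensures $e\subseteq M$, so $e$ survives as a hyperedge. The extra arcs of $e$ lying outside this window are isolated from the window in $\underline{G}[M]$, except possibly through the cycle closing up.

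It is cleaner to first contract every edge of $\underline{G}$ inside each arc $Q_j$ and inside each gap. Contraction of an edge $\{u,v\}\subseteq Q_j$ keeps $e$ as a hyperedge (with $u,v$ merged). Contraction inside a gap does not touch $e$. The result is a path or cycle of alternating single vertices: $q_1,r_1,q_2,r_2,q_3,\dots$, with the hyperedge $\{q_1,q_2,\dots,q_r\}$.

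For $r>3$, also contract the remaining alternating part beyond $q_3$. In the path case, merging the tail $q_4,r_4,\dots$ with $r_3$ or $q_3$ by contracting path edges keeps the hyperedge's image equal to $\{q_1,q_2,q_3\}$, possibly with $r_3$ absorbed into $q_3$. In the cycle case, first take the induced subgraph removing one gap vertex so the cycle becomes a path, then proceed as before. One should instead remove a gap vertex not among $r_1,r_2$; this is possible since $r\geq 3$ gives at least three gaps in the cycle.

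The end result is $P_5$ on $q_1,r_1,q_2,r_2,q_3$ with hyperedge $\{q_1,q_2,q_3\}$. This is exactly Figure \ref{fig:graph on 5}, possibly with extra hyperedges coming from other edges of $\CA$. Those do not matter, because the localization at $X=H_{123}\cap H_{135}\cap H_{234}$ of any arrangement between $\CA_{P_5}\cup\{H_{135}\}$ and $\CA_{K_5}$ equals $\{H_{123},H_{234},H_{345},H_{135}\}$ by Lemma \ref{lemma: three connected components generic loca}.

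\textbf{Main obstacle.} The bookkeeping is the delicate part: we must check that each contraction is along an edge actually in $E$ and that the image of $e$ is still a hyperedge of the contracted hypergraph. Definition \ref{definition: edge contraction} handles the second point automatically, since $e'\setminus e''\cup\{i\}$ is an edge whenever $e'$ meets $e''$. A further subtlety is the cycle case where the deleted gap vertex must not destroy the induced hyperedge; choosing $M$ to contain all of $e$ resolves this. Finally, we must check that hyperedges of $\CA$ other than $e$ cannot alter the rank $3$ localization, which holds because that localization is already all of $(\CA_{K_5})_X$.
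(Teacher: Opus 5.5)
Your proposal is correct and follows essentially the same route as the paper. Like the paper, you contract the arcs of $\underline{G}[e]$ and the gaps between them to single vertices, cut down to three arcs, pass to an induced subhypergraph to reach $P_5$ with the hyperedge $\{1,3,5\}$ from Figure~\ref{fig:graph on 5}, and conclude with Lemma~\ref{lemma: three connected components generic loca}, Theorem~\ref{theorem: Yoshinaga generic not free} and locality. Your explicit handling of the cycle case (deleting a gap vertex outside $R_1,R_2$) and of additional hyperedges (harmless because the localization is already all of $(\CA_{K_5})_X$) fills in details the paper's short proof leaves implicit.
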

\begin{proof}
    Choose an edge $e$ such that $\underline{G}[e]$ has at least three connected components. Since $\underline{G}$ is connected, we can contract edges until $\underline{G}[e]$ has exactly three connected components. Proceed to contract every connected component of $e$ and every connected component of $\underline{G}[N\setminus e]$ to one vertex. The graph created through these operations is still connected and therefore has the hypergraph from Figure \ref{fig:graph on 5} as a subgraph or is equal to it,  and we use Lemma \ref{lemma: three connected components generic loca} to conclude that $\CA$ is totally non-free. \qedhere
    
\end{proof}

We give an example of how we may use this lemma together with the tools of Section \ref{sec:operations} to determine if an arrangement is free. 
\begin{example}\label{example: Hypergraph contraction example}
    Define the hypergraph $G=([9],\{\{1,4,5,6,8,9\},\{i,i+1\}, 1\leq i\leq 8\})$., which is depicted in Figure \ref{example: example generisch kapitel 5}, where we color the hyperedge $e=\{1,4,5,6,8,9\}$ in red.

\begin{figure}
\centering
\scalebox{0.8}{
\begin{tikzpicture}[node distance={15mm}, thick, main/.style = {draw, circle}] 
\filldraw[color=red!60, fill=white!1, thick](5,-1.1) ellipse (5 and 0.7);
\node[main] (2) {$2$}; 
\node[main] (3) [right of=2] {$3$}; 
\node[main] (1) [below right  of=2] {$1$}; 
\node[main] (4) [right of=1] {$4$}; 
\node[main] (5) [right of=4] {$5$};
\node[main] (6) [right of=5] {$6$};
\node[main] (7) [above right of=6] {$7$};
\node[main] (8) [below right of=7] {$8$};
\node[main] (9) [right of=8] {$9$};
\draw[-] (1) to (2); 
\draw[-] (2) to (3); 
\draw[-] (3) to (4); 
\draw[-] (4) to (5);
\draw[-] (5) to (6);
\draw[-] (6) to (7);
\draw[-] (7) to (8);
\draw[-] (8) to (9);
\end{tikzpicture}} 
\caption{Sketch of the hypergraph of Example \ref{example: Hypergraph contraction example}. The corresponding connected hypersubgraph
	arrangement is totally non-free. Note that one can generate the hypergraph from
	Figure 5 through repeated edge contractions.}\label{example: example generisch kapitel 5}
\end{figure}

Note that $G$ is just $P_9$ with an additional hyperedge, so the connected hypersubgraph arrangement $\CA_G$ lies between $\CA_{P_9}$ and $\CA_{K_9}$. The graph $P_9[e]$ has three different connected components ($\{1\},\{4,5,6\},\{8,9\}$) and if we delete the vertices contained in $e$ from $P_9$, then two connected components ($\{2,3\},\{7\}$) remain. We contract all of the five mentioned connected components to one vertex and end up at the graph from Figure \ref{fig:graph on 5}. By Lemma \ref{lemma: contraction of building set}, the corresponding connected hypersubgraph arrangement is a localization of $\CA_G$, however, due to Lemma \ref{Lemma: three connected components totally non-free PnCn}, this arrangement is totally non-free, so $\CA_G$ is totally non-free by Lemma \ref{lemma: contraction of building set}.
\end{example}

This shows that if, for a simple graph $\underline{G}\in\{P_5, C_5\}$ underlying a hypergraph $G$, there exists a hyperedge $e\in G$ such that $\underline{G}[e]$ has three connected components, then $\CA_G$ cannot be free. We extend this observation to the cases where $\underline{G}\in\{A_{k,n},\Delta_{k,n}\}$.  

\begin{lemma}\label{Lemma: three connected components totally non-free AknDkn}
    Let $\underline{G}\in\{A_{k,n},\Delta_{k,n}\}$ with vertex set $N=\{1,2,\dots,n+1\}$ (where $n\geq 4$) and edge set $E$. Let $e\subsetneq N$ with $H_e\not\in\CA_{\underline{G}}$, and $\CA$ be a connected hypersubgraph arrangement between $\CA_{\underline{G}}$ and $\CA_{K_n}$ with $H_e\in\CA$. Assume that $\underline{G}[e]$ has at least three connected components. 
    \begin{enumerate}
        \item If $n+1\not\in e$, then $\CA$ is totally non-free.
        \item If $n+1\in e$, then $\CA$ is not free.
    \end{enumerate}
\end{lemma}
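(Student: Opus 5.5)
We reduce both parts to the earlier results, mainly Lemma \ref{Lemma: three connected components totally non-free PnCn} and Lemma \ref{lemma: three connected components generic loca}. The tools are taking induced hypersubgraphs (Lemma \ref{lemma: Induced subgraph is localization}) and contracting edges (Lemma \ref{lemma: contraction of building set}). Total non-freeness is local, and so is non-freeness. So it suffices to exhibit a localization isomorphic to an arrangement already known to be (totally) non-free.

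\emph{Case (1): $n+1\notin e$.} Then $e\subseteq\{1,\dots,n\}$. The induced subgraph $\underline{G}[\{1,\dots,n\}]$ is the path $P_n$, and $\underline{G}[e]=P_n[e]$ has at least three components.
\begin{enumerate}
\item Localize at $\{1,\dots,n\}$ via Lemma \ref{lemma: Induced subgraph is localization}. This gives a connected hypersubgraph arrangement between $\CA_{P_n}$ and $\CA_{K_n}$ containing $H_e$.
\item If $n\geq5$, and $e\neq\{1,\dots,n\}$ (automatic, since $P_n[e]$ is disconnected), apply Lemma \ref{Lemma: three connected components totally non-free PnCn} directly.
\item If $n=4$, three components inside $P_4$ force $e=\{1,3\}$ or $\{2,4\}$, up to the choice among three of four vertices. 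Here $P_4$ is too short. Instead, work in the whole graph: contract components of $\underline{G}[e]$ and of $\underline{G}[N\setminus e]$ to single vertices, as in the proof of Lemma \ref{Lemma: three connected components totally non-free PnCn}. The aim is the five-vertex configuration of Figure \ref{fig:graph on 5}, i.e.\ a path alternating three $e$-blobs with two non-$e$-blobs, with vertex $n+1$ supplying the missing separator if needed. Then conclude with Lemma \ref{lemma: three connected components generic loca}.
\end{enumerate}

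\emph{Case (2): $n+1\in e$.} Total non-freeness may fail here, so we only aim for non-freeness.
\begin{enumerate}
\item Let $k$ (and $k+1$ for $\Delta_{k,n}$) be the attachment vertex. The components of $\underline{G}[e]$ are the component containing $n+1$ (which may contain $k$) and at least two further components on the path.
\item If two of the components avoiding $n+1$ are separated on the path by a non-$e$ vertex, and a third component lies beyond, contract as before. Contract the component through $n+1$ and the path pieces to single vertices to reach Figure \ref{fig:graph on 5}.
\item Otherwise, contract the edge $\{k,n+1\}$ (or the triangle). This merges $n+1$ into the path and, by Lemma \ref{lemma: contraction of building set}, turns $e$ into a hyperedge on $P_n$ or $C_{n}$ whose induced subgraph still has three components. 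Then apply Lemma \ref{Lemma: three connected components totally non-free PnCn}.
\item In the residual small configurations, where contraction merges components and leaves only two, the contracted graph has a vertex of degree four or a $\CK_3$/$\CK_6$ pattern. In those cases invoke Lemma \ref{lemma: no free between G3,G4,G5,G6 and K_5}, or, for $\CK_7$/$\CK_8$-type leftovers, Theorem \ref{theorem: A_B2 is free}, which gives non-freeness only.
\end{enumerate}

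\emph{Main obstacle.} The hard part is the combinatorial case analysis in (2) when $n+1$ lies in $e$ and its component meets both sides of the attachment vertex. Contracting then merges components and destroys the three-component condition. I would first enumerate the positions of the three components relative to $k$ and $n+1$. I expect each case either to contract to Figure \ref{fig:graph on 5} or to a graph containing $\CK_3$ or $\CK_6$. For the small value $n=4$ I would fall back on direct verification of a generic rank-$3$ localization via Lemma \ref{lemma: connected subgraph arrangements are locallyA2}.
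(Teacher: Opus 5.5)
Your case (1) for $n\ge 5$ matches the paper's argument. Case (2), however, has a genuine gap. Your case split never reaches the configuration the paper isolates in Figure~\ref{figure: three connected figure}: a claw $K_{1,3}$ whose three leaves form the hyperedge $e$. It occurs when $n+1\in e$, the attachment vertex $k$ (resp.\ $k,k+1$ for $\Delta_{k,n}$) is not in $e$, and $e\cap[n]$ has exactly one component on each side of $k$.

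Concretely, let $\underline{G}=A_{4,2}$ (the path on $1,2,3,4$ plus the edge $\{2,5\}$), $e=\{1,3,5\}$, and $\CA=\CA_G$ with $G$ obtained from $\underline{G}$ by adding the hyperedge $e$.
\begin{enumerate}
\item Contracting the components of $\underline{G}[e]$ and $\underline{G}[N\setminus e]$ gives a claw, not a path, so your step 2 never reaches Figure~\ref{fig:graph on 5}.
\item Your step 3 contracts $\{2,5\}$, which turns $e$ into the connected set $\{1,2,3\}$. In fact $\CA_{G/\{2,5\}}=\CA_{P_4}$ is free, so this localization obstructs nothing.
\item The expectation in step 4 is also false. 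The underlying graph is $A_{4,2}$ (or $P_4$ after contraction), with no vertex of degree four and none of $\CK_3,\dots,\CK_8$. Theorem~\ref{theorem: A_B2 is free} says nothing, because $\CA_{A_{4,2}}$ is free.
\end{enumerate}
Yet $\CA$ is not free: the induced hypersubgraph on $\{1,3,5\}$ gives the generic localization $\{H_1,H_3,H_5,H_{135}\}$, by Lemma~\ref{lemma: Induced subgraph is localization} and Theorem~\ref{theorem: Yoshinaga generic not free}. Your other subcases are fine. If $k\in e$ (resp.\ $\{k,k+1\}\cap e\neq\emptyset$), step 3 keeps three components on $P_n$. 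If two path components lie on the same side of $k$, you do reach Figure~\ref{fig:graph on 5}.

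What is missing is a direct analysis of the claw with center $z$, leaves $a_1,a_2,a_3$ and hyperedge $\{a_1,a_2,a_3\}$. Split by which leaf pairs $H_{a_ia_j}$ lie in the contracted arrangement; some may, since other hyperedges of $\CA$ can collapse to edges under contraction.
\begin{enumerate}
\item If none does, the leaves give the generic localization $\{H_{a_1},H_{a_2},H_{a_3},H_{a_1a_2a_3}\}$.
\item If two or three do, you get $\CA_{\CK_1}$ or $\CA_{\CK_2}$.
\item If exactly one does, say $H_{a_1a_3}$, the localization at $H_z\cap H_{a_3}\cap H_{za_1a_2}$ is $\{H_z,H_{a_3},H_{za_3},H_{za_1a_2},H_{a_1a_2a_3},H_{za_1a_2a_3}\}$. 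This is isomorphic to the non-free $X_3$ arrangement.
\end{enumerate}
These last two subcases are why the paper only claims non-freeness in part (2). The weakening does not come from $\CK_7/\CK_8$, whose lemmas give total non-freeness anyway.

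A minor point on case (1): no vertex subset of $P_4$ induces three components, since that needs at least five vertices, and $\{1,3\}$ and $\{2,4\}$ induce only two. So the subcase $n=4$ is vacuous and should be dropped rather than treated.
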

\begin{proof}
    1. Choose an edge $e$ such that $\underline{G}[e]$ has at least three connected components. If $n+1\not\in e$, then $e$ is contained in $\underline{G}[[n]]\simeq P_n$, and we are finished using Lemmas \ref{lemma: Induced subgraph is localization} and \ref{Lemma: three connected components totally non-free PnCn}.\\ 
    \noindent 2. Contracting connected components as before, we either end up at a path-graph with additional hyperedges as in Lemma \ref{Lemma: three connected components totally non-free PnCn} and are finished, or we end up at a hypergraph containing the hypergraph depicted in Figure \ref{figure: three connected figure} as a subgraph.
\begin{figure}
\centering
\scalebox{0.8}{
\begin{tikzpicture}[node distance={15mm}, thick, main/.style = {draw, circle}] 
\filldraw[color=red!60, fill=white!1, thick](1.5,0) ellipse (2 and 0.7);
\node[main] (2) {$1$};  
\node[main] (3) [right of=2] {$3$}; 
\node[main] (1) [above of=3] {$2$}; 
\node[main] (4) [right of=3] {$4$}; 
\draw[-] (1) to (2); 
\draw[-] (3) to (1); 
\draw[-] (4) to (1); 
\end{tikzpicture}}
\caption{A hypergraph as in Lemma \ref{Lemma: three connected components totally non-free AknDkn}. The corresponding connected hypersubgraph arrangement is totally non-free.}\label{figure: three connected figure}
\end{figure}
\noindent Note that the connected hypersubgraph arrangement of this hypergraph is $\CA_{G_2}\setminus\{H_{23},H_{24},H_{34}\}$. If none of these three hyperplanes is included (as a result of the previous contractions), then the localization $\{H_{2},H_{3},H_{4},H_{234}\}$ is a generic arrangement, and we use Theorem \ref{theorem: Yoshinaga generic not free}. If any two or even all three of these hyperplanes are included in the arrangement, the graph is equal to $\CK_1$ or $\CK_2$, and the connected hypersubgraph arrangement is not free by Proposition \ref{Proposition: underlying connected subgraph arrangment is free}. So assume that only $H_{23}$ is missing, and let $X=(H_1\cap H_4\cap H_{123})$, then we have $(\CA_{\CK_2})_X=\{H_1, H_4, H_{14}, H_{123}, H_{234}, H_{1234}\}$. This localization fails to be free, since it is isomorphic to the arrangement consisting of the hyperplanes $\{H_1,H_2,H_3,H_{12},H_{13},H_{23}\}$. All free multiplicities on this so-called $X_3$-arrangement were first classified by DiPasquale and Wakefield in \cite{dipasquale:modulifreeness}. An isomorphism is given by the linear map defined through $$x_1\mapsto x_1, x_2\mapsto x_4, x_3\mapsto -(x_1+x_2+x_3+x_4).$$ The localizations in case $\CA_{\CK_2}$ only contains $H_{24}$ or $H_{34}$ are retrieved by permutation of the indices of the hyperplanes. To be precise, if only $H_{24}$ is contained, then $(\CA_{\CK_2})_X=\{H_1, H_3, H_{13}, H_{124}, H_{234}, H_{1234}\}$, and if only $H_{34}$ is contained, then $(\CA_{\CK_2})_X=\{H_1, H_2, H_{12}, H_{134}, H_{234}, H_{1234}\}$ fails to be free.\\
This shows that, with any number of additional edges, the corresponding connected hypersubgraph arrangement cannot be free, which concludes our proof.
\end{proof}

\noindent Because of Lemmas \ref{Lemma: three connected components totally non-free PnCn} and \ref{Lemma: three connected components totally non-free AknDkn} it remains to investigate possible edges $e\subseteq N$, where $\underline{G}[e]$ has exactly two connected components (if it only had one connected component, then already $H_e\in\CA_{\underline{G}}$ by definition). However, for $\underline{G}=C_n$ there do not exist any free connected hypersubgraph arrangements between $\CA_{\underline{G}}$ and $\CA_{K_n}$.

\begin{proposition}\label{proposition: no free between Cn and Kn}
    Let $\CA$ be a connected hypersubgraph arrangement between $\CA_{C_n}$ and $\CA_{K_n}$, then $\CA$ is not free.
\end{proposition}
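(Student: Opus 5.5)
The plan is to reduce, by localization, to a connected hypersubgraph arrangement on four vertices, where freeness is settled by Theorem~\ref{theorem: FreeConnectedSubgraphArrangements}.

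First, I read ``between'' as $\CA_{C_n}\subsetneq\CA\subseteq\CA_{K_n}$; the case $\CA=\CA_{C_n}$ must be excluded, since $\CA_{C_n}$ itself is free by Theorem~\ref{theorem: FreeConnectedSubgraphArrangements}. So there is some $H_e\in\CA\setminus\CA_{C_n}$, and then $C_n[e]$ is disconnected. For $n=3$ we have $\CA_{C_3}=\CA_{K_3}$, so there is nothing to prove. If $C_n[e]$ has at least three connected components (which forces $n\geq 5$ on a cycle), Lemma~\ref{Lemma: three connected components totally non-free PnCn} shows that $\CA$ is totally non-free, and we are done. Hence the main case is that $C_n[e]$ has exactly two connected components.

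In this case, going around the cycle, the vertex set splits into four nonempty consecutive arcs $A,C,B,D$, with $e=A\cup B$ and $N\setminus e=C\cup D$. I would contract, one at a time, the edges of $C_n$ lying inside each of the four arcs. Every such edge is an edge of the hypergraph defining $\CA$, so Lemma~\ref{lemma: contraction of building set} applies at each step: the arrangement of the contracted hypergraph is a localization of $\CA$. After all contractions, the ordinary edges form the $4$-cycle $a\,c\,b\,d$, and the hyperedge $e$ becomes the chord $\{a,b\}$. Thus the resulting arrangement $\CA'$ on four vertices contains $\CA_{C_4}$ together with $H_{ab}$, where $a,b$ are non-adjacent in $C_4$.

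Now I would identify $\CA'$ completely. On four vertices, every $3$-subset and the full set already induce connected subgraphs of $C_4$, so these sets already lie in $\mathcal{F}_{\CA_{C_4}}$. Therefore the building set of $\CA'$ is determined by its $2$-element sets: $\CA'$ is $\CA_{\CK_1}$ if only the chord $\{a,b\}$ is added, and $\CA_{\CK_2}=\CA_{K_4}$ if the chord $\{c,d\}$ appears as well. Neither of these is on the list in Theorem~\ref{theorem: FreeConnectedSubgraphArrangements}, so neither is free, as also used in Lemma~\ref{lemma: G1G2 subgraph totally non-free}. Since freeness is local (Theorem~\ref{theorem: localizations are free}), $\CA$ is not free. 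The case $n=4$ is covered directly by the same argument: the only sets $e$ with $C_4[e]$ disconnected are the two chords, and no contraction is needed.

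The step I expect to need the most care is the bookkeeping of the contractions. One has to check that the image of $e$ is exactly $\{a,b\}$, and that the image of every other hyperedge is a subset of the four vertices. Each such image can only add $2$-element sets that are chords of $C_4$; all other images are already present in $\CA_{C_4}$. This is exactly why the final arrangement must be $\CA_{\CK_1}$ or $\CA_{\CK_2}$, so it is the point to verify carefully.
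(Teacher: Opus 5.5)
Your proposal is correct and follows essentially the same route as the paper: the case where $C_n[e]$ has three or more components is handled by Lemma~\ref{Lemma: three connected components totally non-free PnCn}, and in the two-component case you contract the four alternating arcs to reach $\CA_{\CK_1}$ or $\CA_{\CK_2}$, which are non-free by Theorem~\ref{theorem: FreeConnectedSubgraphArrangements}. Your additions make precise points the paper leaves implicit: the strict reading of ``between'' (necessary, since $\CA_{C_n}$ itself is free), the observation that every $2$-element set of the building set is an actual edge of the hypergraph (so the contractions are legitimate), and the check that contracted hyperedges can only add chords of $C_4$.
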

\begin{proof}
    If $\underline{G}[e]$ has three connected components, we use Lemma \ref{Lemma: three connected components totally non-free PnCn} to show that $\CA$ is totally non-free. If $\underline{G}[e]$ has one connected component, then $H_e\in\CA_{\underline{G}}\subseteq \CA$ already. So assume that $\underline{G}[e]$ has exactly two connected components. Contracting the connected components of $\underline{G}[e]$ and $\underline{G}[N\setminus e]$ to one vertex, we end up at one the hypergraphs depicted in Figure \ref{fig:5.13}, where $e$ is colored in red. 
\begin{figure}\label{fig:enter-label}
    \centering
    \scalebox{0.8}{
\begin{tikzpicture}[node distance={15mm}, thick, main/.style = {draw, circle}]
\filldraw[color=red!60, fill=white!1, thick](0,-1.1) ellipse (0.5 and 1.75);
\node[main] (1) {$1$}; 
\node[main] (2) [below right of=1] {$2$}; 
\node[main] (3) [below left of=2] {$3$}; 
\node[main] (4) [above left of=3] {$4$}; 
\draw[-] (1) to (2); 
\draw[-] (2) to (3); 
\draw[-] (3) to (4); 
\draw[-] (4) to (1);
\end{tikzpicture}
\hspace{3cm}
\begin{tikzpicture}[node distance={15mm}, thick, main/.style = {draw, circle}] 
\filldraw[color=red!60, fill=white!1, thick](0,-1.1) ellipse (0.5 and 1.75);
\node[main] (1) {$1$}; 
\node[main] (2) [below right of=1] {$2$}; 
\node[main] (3) [below left of=2] {$3$}; 
\node[main] (4) [above left of=3] {$4$}; 
\draw[-] (1) to (2); 
\draw[-] (2) to (3); 
\draw[-] (3) to (4); 
\draw[-] (4) to (1);
\draw[-] (4) to (2);
\end{tikzpicture}}
\caption{Hypergraphs whose corresponding connected hypergraph arrangement is not
	free. They are required for the proof of Proposition \ref{proposition: no free between Cn and Kn}.}
\label{fig:5.13} 
\end{figure}

Since the corresponding connected hypersubgraph arrangement of these graphs is equal to $\CA_{\CK_1}$ or $\CA_{\CK_2}$, it is not free due to Theorem \ref{theorem: FreeConnectedSubgraphArrangements}.
\end{proof}

\subsection{Constructing free connected hypersubgraph arrangements}
Given two free, irreducible $(0/1)$-arrangements, we can create a new canonical free multiarrangement in the following way.

\begin{theorem}\label{theorem: glueing irreducible free arrangements together}
Assume $(\CA_1,V_1), (\CA_2,V_2)$ are free $(0/1)$-arrangements with $\rank \CA_1=r_1,\rank\CA_2=r_2$, and $H_{1\dots r_1}\in\CA_1, H_{r_1+1\dots r_1+r_2}\in\CA_2$. Then $\CA :=(\CA_1\times\CA_2\cup\{H_{1\dots r_1+r_2}\}, V_1\oplus V_2)$ is free.
\end{theorem}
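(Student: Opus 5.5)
We will prove freeness of $\CA=\CA_1\times\CA_2\cup\{H_0\}$, where $H_0:=H_{1\dots r_1+r_2}$, by the Addition part of Theorem \ref{thm:add-del}, taking $H_0$ as the distinguished hyperplane. Then $\CA'=\CA_1\times\CA_2$. Since $\CA_1,\CA_2$ are free, $\CA'$ is free, with exponents the union $\exp\CA_1\cup\exp\CA_2$ (a product of free arrangements is free with the concatenated exponents). We may assume $V_i$ has dimension $r_i$; otherwise split off the empty factor. We may also assume that $\CA_1$ and $\CA_2$ are essential, since any other coordinates only contribute zeros to the exponents.

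Both arrangements contain the Euler derivation, so $0$ does not occur among their exponents, but each contains a $0$-free basis with one exponent $1$. Write $\exp\CA_1=\{1,d_2,\dots,d_{r_1}\}$ and $\exp\CA_2=\{1,f_2,\dots,f_{r_2}\}$. The restriction $\CA''=\CA^{H_0}$ should then be free with exponents $\{1,d_2,\dots,d_{r_1},f_2,\dots,f_{r_2}\}$. The addition theorem would then yield $\exp\CA=\{2,d_2,\dots,f_{r_2}\}$: the two exponents equal to $1$ coming from the two Euler derivations of the factors merge into a single $1$ and a $2$.

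The key step is to identify $\CA''$. On $H_0$ we have $\sum_{i\le r_1}x_i=-\sum_{j>r_1}x_j=:s$. A hyperplane $H_I\times V_2$ with $I\subseteq[r_1]$ restricts to $\ker(\alpha_I|_{H_0})$. Two distinct hyperplanes $H_I\times V_2$ and $V_1\times H_J$ can coincide on $H_0$ only if $\alpha_I+\alpha_J$ is a multiple of $\alpha_0$, that is, only when $I=[r_1]$ and $J=\{r_1+1,\dots,r_1+r_2\}$. Those two both restrict to the hyperplane $s=0$, and otherwise all restrictions are distinct. Hence $|\CA''|=|\CA|-2$.

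Now use coordinates $(x_1,\dots,x_{r_1},x_{r_1+2},\dots)$ on $H_0$, with $x_{r_1+1}$ eliminated. The restriction of $\CA_1$ is just $\CA_1$, and the restriction of $\CA_2$ is $\CA_2$ composed with a linear change of coordinates in which $s$ enters as $x_{r_1+1}=-s-\sum_{j>r_1+1}x_j$. Thus $\CA''$ is the union of $\CA_1$ (in $V_1$) and a copy of $\CA_2$ pulled back along the projection $V_1\oplus V_2'\to$ (span of $s$ and $V_2'$). This is exactly the situation in which the two arrangements share the single hyperplane $s=0$.

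To show that $\CA''$ is free, we build a basis and apply Saito's criterion (Theorem \ref{thm:saito}). We take the non-Euler basis elements of $\CA_1$, lifted trivially. For $\CA_2$ we first change the basis of $D(\CA_2)$ so that the non-Euler elements annihilate $\alpha_{[r_2]}$, which is possible after subtracting multiples of $\theta_E$ because $H_{[r_2]}\in\CA_2$. These elements are then transported to $H_0$ and complemented by the Euler derivation of $H_0$. The determinant factors as a block triangular product. It gives $Q(\CA_1)\cdot Q(\CA_2)$ divided by the shared factor $s$ (counted once), which equals $Q(\CA'')$ up to a scalar. The main obstacle is this step: checking that the transported $\CA_2$-derivations still preserve every $\CA_1$-hyperplane. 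I expect this to follow because they kill $s$ and the coordinates of $V_1$ except through $s$. If the direct argument is delicate, the fallback is to invoke the known freeness of such a "gluing along a common hyperplane", which amounts to a product after the coordinate change $s\mapsto$ new variable. Finally, the exponent sum $|\CA''|=|\CA_1|+|\CA_2|-1$ matches, which completes the addition step.
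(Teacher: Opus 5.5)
\textbf{Comparison with the paper.} Your route differs from the paper's. The paper never forms the restriction. It applies Saito's criterion directly to $\CA$ with the basis $\theta_E$, $\alpha_{H_0}\nu_E$, $\mu_i+\frac{\mu_i(\alpha_{[r_1]})}{\alpha_{[r_1]}}\nu_E$, $\nu_i+\frac{\nu_i(\alpha_{[r_2]})}{\alpha_{[r_2]}}\mu_E$. In other words, each factor's derivations are corrected by the \emph{other} factor's Euler derivation so that $\alpha_{H_0}$ is preserved. Your addition argument is viable and yields more: freeness of $\CA''$, the gluing of $\CA_1$ and $\CA_2$ along $s=0$, together with the exponents of the triple. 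As written, however, the Saito step for $\CA''$ has a gap.

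\textbf{The gap.} You normalize only the $\CA_2$-derivations. The non-Euler elements $\mu$ of $D(\CA_1)$ ``lifted trivially'' (so $\tilde\mu(x_j)=0$ for $j\ge r_1+2$) are in general \emph{not} in $D(\CA'')$. Take $r_1+1\in J\subsetneq\{r_1+1,\dots,r_1+r_2\}$ with $V_1\times H_J\in\CA'$. On $H_0$ its form is $-(s+\sum_{j>r_1+1,\,j\notin J}x_j)$, and $\tilde\mu$ sends this to $-\mu(s)$. That is a polynomial in $x_1,\dots,x_{r_1}$ alone, so it is divisible by the form only if it vanishes.

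Concretely, take $\CA_1=\CA_2=\CA_{P_2}$ and $\mu=x_1^2\partial_1-x_2^2\partial_2$. Then $H_3$ restricts to $x_1+x_2+x_4=0$, but $\tilde\mu(x_1+x_2+x_4)=x_1^2-x_2^2$, which is not divisible by $x_1+x_2+x_4$.

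\textbf{The fix.} Apply your normalization symmetrically: replace $\mu_i$ by $\mu_i-\frac{\mu_i(s)}{s}\mu_E$, so that $\mu_i(s)=0$. Now use coordinates $(s,y_2,\dots,y_{r_1},z_2,\dots,z_{r_2})$ on $H_0$. Both families kill $s$, and each family kills every form of the other factor. The matrix with the Euler derivation of $H_0$ then has first column $(s,0,\dots,0)^T$ and a block-diagonal remainder. Its determinant is $c\cdot s\cdot\frac{Q(\CA_1)}{s}\cdot\frac{Q(\CA_2)}{\alpha_{[r_2]}}$, with the last factor pulled back to $H_0$, and this equals $c\,Q(\CA'')$. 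Note that the check you flagged as the main obstacle is immediate once $\nu_i'(\alpha_{[r_2]})=0$; the real issue was the other direction.

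\textbf{Two further corrections.} First, drop the fallback. $\CA''$ is not a product after any coordinate change: its matroid is the parallel connection of the matroids of $\CA_1$ and $\CA_2$. For $\CA_1=\CA_2=\CA_{P_2}$, $\CA''$ is the irreducible arrangement with building set $\{\{1\},\{2\},\{3\},\{1,2\},\{1,2,3\}\}$, whose exponents are $(1,2,2)$. So the Saito computation is genuinely needed. Second, the exponents you list for $\CA$ should read $\{1,2,d_2,\dots,d_{r_1},f_2,\dots,f_{r_2}\}$, as your own sentence about the two $1$'s merging indicates.
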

\begin{proof}
 Both $\CA_1,\CA_2$ are free, so for a fixed $i\in\{1,2\}$ the module $D(\CA_i)$ is free, and we fix bases $B_1=\{\mu_E,\mu_2, \dots,\mu_{r_1}\}$ and $B_2 = \{\nu_E, \nu_2, \dots, \nu_{r_2}\}$, where $\mu_E$ and $\nu_E$ are the respective Euler derivations.
 We use the canonical inclusion of $D(\CA_i)\to D(\CA_1\times\CA_2)$ by defining $S_{V_1} = \mathbb{K}[x_1,\dots, x_{r_1}]$ and $S_{V_2} = \mathbb{K}[x_{r_1+1},\dots, x_{r_1+r_2}]$, such that $S_{V_1\oplus V_2} = \mathbb{K}[x_1, \dots, x_{r_1+r_2}]$, and we consider $\text{Der}_{S_1\oplus S_2}$.\\

Adding $H_{\max}:=H_{1\dots r_1+r_2}$ to $\CA_1\times\CA_2$, we construct a new basis based on $B_1$ and $B_2$. 
We choose the Euler Derivation $\theta_E= \mu_E + \nu_E = \sum_{i=1}^{r_1+r_2} x_i \partial_{x_i}\in D(\CA)$ as a basis element of degree $1$ and $\theta_N=\alpha_{H_{\max}} \cdot \nu_E\in D(\CA)$ as basis element of degree $2$. 

\medskip 
It holds that $\restrtwo{\mu}{S_{V_2}} = 0$ and $\restrtwo{\nu}{S_{V_1}} = 0$ for all $\mu \in \text{Der}(\A_1), \nu \in \text{Der}(\A_2)$. 
For $\mu_i \in B_1, \nu_i \in B_2, 2 \leq i$, we define new derivations as follows: \[\theta_i^{\mu} = \mu_i + \frac{\mu_i(\alpha_{H_{1\dots r_1}})}{\alpha_{H_{1\dots r_1}}}\cdot \nu_E~\text{and}~\theta_i^{\nu} = \nu_i + \frac{\nu_i(\alpha_{H_{r_1\dots r_1+r_2}})}{\alpha_{H_{r_1\dots r_1+r_2}}}\cdot \mu_E\]

It is clear that the condition for these derivations $\theta_i^\mu, \theta_i^{\nu}$ to be elements of $\text{Der}(\A)$ is met for all $H \in \A_1 \times \A_2$. For $H_{\max}$, we have 
\begin{align*}
   \theta^{\mu}_i(\alpha_{H_{\max}}) &= \theta^{\mu}_i(\alpha_{H_{1 \dots r_1}})+\theta^{\mu}_i(\alpha_{H_{r_1+1\dots,r_1+r_2}})\\
    &= \frac{\mu_i(\alpha_{H_{1\dots r_1}})}{\alpha_{H_{1\dots r_1}}} \cdot \alpha_{H_{1 \dots r_1}} + \frac{\mu_i(\alpha_{H_{1\dots r_1}})}{\alpha_{H_{1\dots r_1}}} \cdot\alpha_{H_{r_1+1\dots r_1+r_2}}\\
    &=\frac{\mu_i(\alpha_{H_{1\dots r_1}})}{\alpha_{H_{1\dots r_1}}} \cdot\alpha_{H_{\max}}\in \alpha_{H_{\max}}\cdot S_{V_1\oplus V_2}.
\end{align*}

Showing $\theta_i^{\nu}(\alpha_{H_{\max}}) \in \alpha_{H_{\max}}\cdot S_{V_1\oplus V_2}$ is an analogous computation. Computing the determinant of $M(\theta_E, \theta_N, \theta^\mu_2, \dots , \theta^\mu_{r_1}, \theta^\nu_2, \dots, \theta^\nu_{r_1+r_2})$ in order to use Saito's criterion (Theorem \ref{thm:saito}), we perform some row operations: 
\begin{align*}
    \det(M(\theta_E, \theta_N, \theta^\mu_2, \dots , \theta^\mu_{r_1}, \theta^\nu_2, \dots, \theta^\nu_{r_1+r_2})) &= \alpha_{H_{\max}} \cdot \det(M(\theta_E, \nu_E, \theta^\mu_2, \dots , \theta^\mu_{r_1}, \theta^\nu_2, \dots, \theta^\nu_{r_1+r_2}))\\  &= \alpha_{H_{\max}} \cdot \det(M(\theta_E-\nu_E, \nu_E, \theta^\mu_2, \dots , \theta^\mu_{r_1}, \theta^\nu_2, \dots, \theta^\nu_{r_1+r_2})) \\ &= \alpha_{H_{\max}} \cdot \det(M(\mu_E, \nu_E, \theta^\mu_2, \dots , \theta^\mu_{r_1}, \theta^\nu_2, \dots, \theta^\nu_{r_1+r_2})).
\end{align*}
The entries for the Euler derivations are just the variables $x_i$ themselves; thus, it follows again with basic row operations:
\begin{align*}
    \alpha_{H_{\max}} \cdot \det(M(\mu_E, \nu_E, \theta^\mu_2, \dots, \theta^\nu_{r_1+r_2})) &=  \alpha_{H_{\max}} \cdot \det(M(\mu_E, \nu_E, \mu_2, \dots , \mu_{r_1}, \nu_2, \dots, \nu_{r_1+r_2})) \\ &= \alpha_{H_{\max}} \cdot (-1)^{r_1-1} \cdot \det(M(\mu_E, \mu_2, \dots , \mu_{r_1},\nu_E, \nu_2, \dots, \nu_{r_1+r_2})) \\ &= \alpha_{H_{\max}} \cdot (-1)^{r_1-1} \cdot \det(M(\mu_E, \mu_2, \dots , \mu_{r_1}) \cdot \det(M(\nu_E, \nu_2, \dots, \nu_{r_1+r_2}))\\ &= c \cdot \mathcal{Q}(\A),
\end{align*}
where the last inequality comes from the fact that the matrix is a block matrix and Saito's criterion holds for the bases $B_1$ and $B_2$. Applying Saito's riterion, we can assert that these derivations form a basis and $\A$ is free. 
\end{proof}

We now demonstrate that Theorem \ref{theorem: glueing irreducible free arrangements together} can be utilized to construct infinitely many free connected hypersubgraph arrangements, that are not connected subgraph arrangements.

\begin{remark}
Using the notation as in Theorem \ref{theorem: glueing irreducible free arrangements together}, let $\CA_1,\CA_2$ be connected hypersubgraph arrangements with respective hypergraphs $G_1,G_2$. Then, we have the following:
    \begin{enumerate}
        \item Adding $H_{\max}$ to $\CA_1\times\CA_2$ is just the connected hypersubgraph arrangement stemming from the hypergraph that gets created when connecting $G_1$ and $G_2$ by the hyperedge $V(G_1)\cup V(G_2)$.
        \item If $\CA_1,\CA_2$ in Theorem \ref{theorem: glueing irreducible free arrangements together} are connected hypersubgraph arrangements, then according to Lemma \ref{lemma: G not in B then AB reducible} the condition $H_{1 \dots r_1}\in\CA_1, H_{r_1+1 \dots r_1+r_2}\in\CA_2$ is equivalent to $\CA_1$ and $\CA_2$ being irreducible. Therefore, we can create new free, irreducible connected hypersubgraph arrangements by "gluing them together" by utilizing Theorem \ref{theorem: glueing irreducible free arrangements together}.
        \item Let $\CA_1=\CA_2=\CA_{P_2}$. By Theorem \ref{theorem: glueing irreducible free arrangements together}, the arrangement $(\CA_1\times\CA_2\cup\{H_{1234}\},\mathbb{Q}^4)$ is free with exponents $(1,2,2,2)$. Since there does not exist an irreducible connected subgraph arrangement with seven hyperplanes, the newly created connected hypersubgraph arrangement cannot be a connected subgraph arrangement. This process is visualized in Figure \ref{fig: Glueing P2 and P2}.
        \item We want to mention that there exist connected hypersubgraph arrangements that are neither connected subgraph arrangements, nor constructed through Theorem \ref{theorem: glueing irreducible free arrangements together}. An example is the hypergraph $G=(N,E)$, where $$N=\{1,2,3,4\}\text{ and }E=\{\{1\},\{2\},\{3\},\{4\},\{1, 2\},\{3, 4\},\{1, 2, 3\},\{1, 2, 3, 4\}\}.$$ Then $\CA_G$ is free with exponents $(1,2,2,3)$. However, removing $H=H_{1234}$ from $\CA_G$ yields a free arrangement with exponents $(1,2,2,2)$, and therefore the deletion is not reducible.
    \end{enumerate}
\end{remark}

\begin{figure}
    \centering
    \scalebox{0.8}{
    \begin{minipage}[c]{0.45\textwidth}
        \centering
        \begin{tikzpicture}[node distance={15mm}, thick, main/.style = {draw, circle}] 
            \node[main] (1) {$1$}; 
            \node[main] (2) [right of=1] {$2$}; 
            \node[main] (3) [right of=2] {$3$}; 
            \node[main] (4) [right of=3] {$4$}; 
            \draw[-] (1) to (2); 
            \draw[-] (3) to (4); 
        \end{tikzpicture}
    \end{minipage}
    \hspace{1cm}
    \begin{minipage}[c]{0.45\textwidth}
        \centering
        \begin{tikzpicture}[node distance={15mm}, thick, main/.style = {draw, circle}] 
            \filldraw[color=red!60, fill=white!1, thick](2.25,0) ellipse (3 and 0.75);
            \node[main] (1) {$1$}; 
            \node[main] (2) [right of=1] {$2$}; 
            \node[main] (3) [right of=2] {$3$}; 
            \node[main] (4) [right of=3] {$4$}; 
            \draw[-] (1) to (2); 
            \draw[-] (3) to (4); 
        \end{tikzpicture}
    \end{minipage}}
    \caption{The free and reducible connected subgraph arrangement $\CA_{P_2}\times\CA_{P_2}$ and the irreducible connected hypersubgraph arrangement created through Theorem \ref{theorem: glueing irreducible free arrangements together}.}
    \label{fig: Glueing P2 and P2}
\end{figure}

\section{Simplicial arrangements} \label{sec:simplicial}
Cuntz and Kühne classified all connected subgraph arrangements that are simplicial (see Definition \ref{def:simp}). Being simplicial is a local property for arrangements, as proven in \cite[Proposition~3.2]{stanley:supersolvablelocal}. The following is well known and follows immediately from the fact that simplicial arrangements are $K(\pi, 1)$ \cite{deligne:immeubles}, while generic arrangements are not \cite{hattori:topology}.

\begin{lemma}\label{lemma: generic arrangement not simplicial}
    Let $\CA$ be a generic arrangement in $\mathbb{K}^n$ with $n\geq 3$ and $\vert\CA\vert=h\geq n+1$. Then $\CA$ is not simplicial.
\end{lemma}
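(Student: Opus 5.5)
The plan is to argue by contradiction, using the two facts quoted just before the statement: simplicial arrangements are $K(\pi,1)$ by Deligne \cite{deligne:immeubles}, while generic arrangements with more hyperplanes than the dimension are not $K(\pi,1)$ by Hattori \cite{hattori:topology}. Since Definition \ref{def:simp} is formulated over $\mathbb{R}$, we take $\mathbb{K}=\mathbb{R}$, or pass to the real arrangement with the same defining forms; this is automatic for the $(0/1)$-arrangements considered in this paper. Suppose $\CA$ is simplicial. By Deligne's theorem its complement $M(\CA)$ is a $K(\pi,1)$-space.

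The second step is to show that $\CA$ is not $K(\pi,1)$ under the hypotheses $n\geq 3$ and $h\geq n+1$. First we check that "generic" in the sense used here, namely $\vert\CA_X\vert=k$ for every $X\in L(\CA)_k$ with $k<\rank\CA$, agrees with the general-position notion in Hattori's theorem. Any $n$ of the hyperplanes should be linearly independent, so that $\rank\CA=n$ and every proper subset of size at most $n-1$ intersects in the expected codimension. Hattori shows that the complement of $h>n\geq 3$ hyperplanes in general position in $\mathbb{C}^n$ has nonvanishing higher homotopy, for instance $\pi_{n-1}\neq 0$, since its universal cover is not contractible. So the complexified complement is not aspherical, which contradicts the first step.

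A more elementary fallback avoids citing Hattori and stays within the real picture. Here we use the standard fact that simpliciality is inherited by localizations, which holds for rank $3$ subarrangements \cite{stanley:supersolvablelocal}. We then exhibit a chamber that is not a simplicial cone. In rank $3$, projectivize: genericity means the $h\geq 4$ lines in $\mathbb{RP}^2$ have only double points. For such a simple line arrangement, counting vertices, edges and faces gives $f_0=\binom{h}{2}$ and $f_1=h(h-1)$, so $f_2=f_1-f_0+1=\binom{h}{2}+1$. If every face were a triangle, we would have $2f_1=3f_2$, that is, $2h(h-1)=3\binom{h}{2}+3$. This equation has only the solution $h=3$, a contradiction for $h\geq 4$. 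For general $n$, we would restrict to a rank $3$ subarrangement: take $n-3$ hyperplanes, intersect them to get $X$, and consider the restriction $\CA^X$. Because $\CA$ is generic, $\CA^X$ is again generic of rank $3$ with $h-n+3\geq 4$ hyperplanes. Restrictions of simplicial arrangements are simplicial, since the faces of simplicial cones are simplicial.

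The main obstacle is matching the definition of genericity in the paper with the hypotheses of the cited results. The definition only constrains $\vert\CA_X\vert$ for $k<\rank\CA$. We need to confirm that $\rank\CA=n$, which holds for essential arrangements and is implicit here, and then that the restricted arrangement remains generic. Once these are checked, either route finishes the proof.
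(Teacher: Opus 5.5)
Your main argument is the paper's own proof. Deligne gives that the complexification of a simplicial arrangement is $K(\pi,1)$, and Hattori gives that a generic arrangement with $h\geq n+1$, $n\geq 3$ is not; the compatibility check you flag (that $\vert\CA_X\vert=k$ for every $X$ of rank $k<\rank\CA$ forces any $\rank\CA$ hyperplanes to be independent, i.e.\ general position after essentialization) is routine and left implicit in the paper. Your elementary fallback also works---the count $f_2=\binom{h}{2}+1<\tfrac{2}{3}f_1$ for $h\geq 4$ lines with only double points in $\mathbb{RP}^2$, combined with restriction to $X=H_1\cap\dots\cap H_{n-3}$---but it genuinely needs restriction rather than localization, since for $n\geq 4$ every rank-$3$ localization of a generic arrangement consists of only three independent hyperplanes.
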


A very important tool to check if a given arrangement is simplicial is the following result by Cuntz and Geis. 

\begin{proposition}\label{proposition: criteria for simplicity}\cite[Corollary~2.4]{cuntzgeis:simplicialcriteria}
    Let $\CA$ be a central essential arrangement of hyperplanes in $\mathbb{R}^n, n\geq 2$. Then $\CA$ is simplicial if and only if $$n\cdot \vert\mathcal{K}(\CA)\vert=2\sum_{H\in\CA} \vert \mathcal{K}(\CA^H)\vert.$$
\end{proposition}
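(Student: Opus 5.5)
The plan is a double count of pairs $(K,F)$, where $K\in\mathcal{K}(\CA)$ is a chamber and $F$ is a facet of $\overline{K}$. The identity will show that $2\sum_{H\in\CA}\vert\mathcal{K}(\CA^H)\vert$ equals the total number of facets of all chambers. Separately, every chamber of an essential arrangement has at least $n$ facets, with equality exactly for simplicial cones. Together these give the criterion.

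\emph{Step 1 (facets versus restricted chambers).} Fix $H\in\CA$. Every chamber $C$ of $\CA^H$ is an open convex cone of dimension $n-1$ in $H$ that meets no other hyperplane $H'\neq H$. Indeed, a hyperplane $H'\supseteq H$ would equal $H$, and a hyperplane $H'$ not containing $H$ meets $H$ in a member of $\CA^H$.
\begin{itemize}
\item Take a point $p\in C$. A small ball around $p$ meets only $H$, so it is split into two half-balls lying in two distinct chambers $K_+$ and $K_-$, one on each side of $H$.
\item For both chambers, $\overline{C}$ is a facet of $\overline{K_\pm}$. The reason is that $\overline{K_\pm}\cap H$ is a closed convex set of dimension $n-1$ containing $C$. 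It cannot be larger than $\overline{C}$, since otherwise its relative interior would cross some $H'\cap H$, and then $K_\pm$ would meet $H'$.
\item Conversely, every facet $F$ of a chamber $K$ spans some $H\in\CA$. Its relative interior avoids all other hyperplanes, so it is a chamber of $\CA^H$, and $K$ is one of the two chambers adjacent to it.
\end{itemize}
This gives a bijection between pairs $(K,F)$ and pairs $(C,\text{side})$ with $C\in\mathcal{K}(\CA^H)$ for some $H$. Hence
\[\sum_{K\in\mathcal{K}(\CA)} f(K)=2\sum_{H\in\CA}\vert\mathcal{K}(\CA^H)\vert,\]
where $f(K)$ is the number of facets of $\overline{K}$.

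\emph{Step 2 (each chamber has at least $n$ facets).} Since $\CA$ is essential, $\bigcap_{H\in\CA}H=0$. So every chamber closure $\overline{K}$ is a pointed polyhedral cone with nonempty interior, and it equals the intersection of the closed half-spaces given by its facets. If $f(K)<n$, the facet normals span a space of dimension less than $n$. Then $\overline{K}$ contains the nonzero orthogonal complement of that space, hence a line, contradicting pointedness. Therefore $f(K)\geq n$.

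If $f(K)=n$, the $n$ facet normals are linearly independent by the same argument. Then $\overline{K}$ is the image of the positive orthant under a linear isomorphism, so $K$ is an open simplicial cone. Conversely, an open simplicial cone $\langle\alpha_1^\lor,\dots,\alpha_n^\lor\rangle_{>0}$ has exactly $n$ facets. So $f(K)=n$ holds if and only if $K$ is simplicial.

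\emph{Step 3 (conclusion).} Combining both steps,
\[2\sum_{H\in\CA}\vert\mathcal{K}(\CA^H)\vert=\sum_K f(K)\geq n\,\vert\mathcal{K}(\CA)\vert.\]
Equality holds if and only if $f(K)=n$ for every chamber, that is, if and only if $\CA$ is simplicial.

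The main obstacle is Step 1. There one must check carefully that chambers of $\CA^H$ correspond exactly to facets, and not to larger faces or to unions of facets of one chamber. The key point is that the relative interior of a facet meets no second hyperplane, and that is what I would verify first. For $n=2$ the same argument works, with rays playing the role of chambers of $\CA^H$.
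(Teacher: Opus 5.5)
The paper gives no proof of this proposition. It is quoted from Cuntz--Geis and used as a black box, together with Zaslavsky's theorem, in the simpliciality section. Your argument is correct and is the standard self-contained proof of that criterion. It combines two facts: the double count $\sum_{K}f(K)=2\sum_{H\in\CA}\vert\mathcal{K}(\CA^H)\vert$ of chamber--wall incidences, and the fact that chambers of an essential arrangement are pointed cones, so each has at least $n$ walls, with equality exactly for simplicial cones.

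Compared with the citation, your route makes the paper self-contained here and proves slightly more: $n\vert\mathcal{K}(\CA)\vert\le 2\sum_{H}\vert\mathcal{K}(\CA^H)\vert$ holds for every essential arrangement, with defect $\sum_K(f(K)-n)$. This explains why the right-hand side is the larger number in each of the paper's non-simplicial checks ($880<944$, $1040<1088$, $384<404$, $54<56$).

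Two claims in Step~1 are asserted rather than shown. The first is that $\overline{K_\pm}\cap H$ contains all of $C$; this is also what makes $K_\pm$ independent of the choice of $p$. The second, dually, is that the relative interior of a facet is a whole chamber of $\CA^H$, not just a subset of one.

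Both follow at once from writing $\overline{K}=\bigcap_{H'\in\CA}\{\sigma_K(H')\alpha_{H'}\ge 0\}$ with a sign vector $\sigma_K$. If $p\in C\cap\overline{K}$, then $\sigma_K(H')$ is the sign of $\alpha_{H'}(p)$ for every $H'\neq H$. Hence $\overline{K}\cap H=H\cap\bigcap_{H'\neq H}\{\sigma_K(H')\alpha_{H'}\ge 0\}=\overline{C}$, which gives both claims.
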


Due to a result by Zaslavsky, we can calculate the number of chambers $\mathcal{K}(\CA)$ in order to apply Proposition \ref{proposition: criteria for simplicity} by evaluating characteristic polynomials (see Definition \ref{def:charac_pol}).

\begin{theorem}\cite[Zaslavsky's Theorem]{zaslavsky:chamberformula}\label{proposition: zaslavsky simplicial criteria}
   $\vert\mathcal{K}(\CA)\vert=(-1)^n\chi(\CA,-1).$
\end{theorem}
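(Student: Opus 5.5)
The plan is to prove the formula by induction on $\vert\CA\vert$ via the deletion--restriction triple $(\CA,\CA',\CA'')$ with distinguished hyperplane $H_0$. Throughout we work over $\mathbb{R}$; this is harmless for our arrangements, since they are defined over $\mathbb{Q}$. Write $r(\CA):=\vert\mathcal{K}(\CA)\vert$. Both sides should satisfy the same recursion:
\begin{align*}
r(\CA)&=r(\CA')+r(\CA''),\\
\chi(\CA,t)&=\chi(\CA',t)-\chi(\CA'',t).
\end{align*}
Here $\chi(\CA'',t)$ is computed in the ambient space $H_0$, of dimension $n-1$. Granting both identities, the inductive step is immediate:
\[(-1)^n\chi(\CA,-1)=(-1)^n\chi(\CA',-1)+(-1)^{n-1}\chi(\CA'',-1)=r(\CA')+r(\CA'').\]
This uses the induction hypothesis for $\CA'$ in $V$ and for $\CA''$ in $H_0$, both of which have fewer hyperplanes. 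The base case is the empty arrangement. There $L$ consists of $V$ alone, so $\chi=t^n$, and $(-1)^n(-1)^n=1$ equals the single chamber $V$.

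For the chamber recursion, I would take a chamber $K$ of $\CA'$ and split into two cases. If $K\cap H_0=\emptyset$, then $K$ is still a chamber of $\CA$. Otherwise, by convexity, $H_0$ cuts $K$ into exactly two chambers of $\CA$. The open convex set $K\cap H_0$ is then a chamber of $\CA''$ in $H_0$, and conversely every chamber of $\CA''$ arises in this way from exactly one $K$. Counting gives $r(\CA)=r(\CA')+\#\{K: K\cap H_0\neq\emptyset\}=r(\CA')+r(\CA'')$.

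For the polynomial recursion, I would first establish Whitney's formula
\[\chi(\CA,t)=\sum_{\mathcal{B}\subseteq\CA}(-1)^{\vert\mathcal{B}\vert}t^{\dim\bigcap\mathcal{B}}.\]
This follows from the crosscut theorem, namely $\mu(V,X)=\sum_{\mathcal{B}\subseteq\CA_X,\ \bigcap\mathcal{B}=X}(-1)^{\vert\mathcal{B}\vert}$. One verifies it by checking that the right-hand side satisfies the defining recursion of $\mu$, since $\sum_{\mathcal{B}\subseteq\CA_Y}(-1)^{\vert\mathcal{B}\vert}=0$ for $Y\neq V$. Next, split the subsets $\mathcal{B}$ according to whether $H_0\in\mathcal{B}$. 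Those with $H_0\notin\mathcal{B}$ contribute exactly $\chi(\CA',t)$. For those with $H_0\in\mathcal{B}$, writing $\mathcal{B}=\{H_0\}\cup\mathcal{C}$, the sum is
\[-\sum_{\mathcal{C}\subseteq\CA'}(-1)^{\vert\mathcal{C}\vert}t^{\dim(H_0\cap\bigcap\mathcal{C})}.\]
The remaining task is to identify this sum with $\chi(\CA'',t)$.

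I expect this identification to be the main obstacle. Several hyperplanes of $\CA'$ may restrict to the same hyperplane of $\CA''$, so subsets of $\CA'$ do not correspond bijectively to subsets of $\CA''$. To handle this, I would group the terms by the flat $Y=H_0\cap\bigcap\mathcal{C}\in L(\CA'')$. It then suffices to show that, for each such $Y$,
\[\sum_{\mathcal{C}\subseteq\CA',\ H_0\cap\bigcap\mathcal{C}=Y}(-1)^{\vert\mathcal{C}\vert}=\mu_{\CA''}(H_0,Y).\]
I would do this by Möbius inversion over the interval $[H_0,Y]$ of $L(\CA'')$. The point is that the sum over all $\mathcal{C}$ with $H_0\cap\bigcap\mathcal{C}\supseteq Y$ is a sum over all subsets of the nonempty-or-empty set of hyperplanes $H\in\CA'$ with $H\supseteq Y$. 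Hence it vanishes unless $Y=H_0$, in which case it equals $1$. These are exactly the defining relations of $\mu_{\CA''}(H_0,\cdot)$, which completes the identification and the induction.
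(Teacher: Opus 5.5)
Your proof is correct. It is the standard deletion--restriction argument, and your M\"obius inversion over $L(\CA'')$ settles the step you flagged as the main obstacle. The one point you leave implicit is that for $Y\in L(\CA'')$ with $Y\neq H_0$ some $H\in\CA'$ contains $Y$, because $Y$ is an intersection of restrictions $H\cap H_0$, while no $H\in\CA'$ contains $H_0$.

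The paper gives no proof of this statement. It quotes the formula from Zaslavsky and uses it only as a computational device: it evaluates characteristic polynomials at $-1$ to test the Cuntz--Geis criterion of Proposition~\ref{proposition: criteria for simplicity} in Proposition~\ref{proposition: underlying connected subgraph simplicial} and Lemma~\ref{lemma: circle graph no hyperedge if simplicial}. For that purpose the citation suffices; what your argument adds is self-containedness.

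Your identification step can be shortened. Whitney's sum is unchanged by a repeated hyperplane. If $H,H'\in\CA'$ satisfy $H\cap H_0=H'\cap H_0$, then for every $\mathcal{D}$ avoiding both, the terms for $\mathcal{D}\cup\{H\}$, $\mathcal{D}\cup\{H'\}$ and $\mathcal{D}\cup\{H,H'\}$ sum to the term for $\mathcal{D}\cup\{H\}$ alone. So the $H_0$-part collapses directly to Whitney's formula for $\CA''$ in $H_0$.

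There is also a standard non-inductive route. Partition each flat $Y$ into relatively open faces, and use that a face of dimension $d$ has compactly supported Euler characteristic $(-1)^d$. This gives $\sum_{X\in L(\CA),\,X\subseteq Y}(-1)^{\dim X}\vert\mathcal{K}(\CA^X)\vert=(-1)^{\dim Y}$ for every $Y\in L(\CA)$. M\"obius inversion at $Y=V$ then yields the theorem directly. It costs a topological input, but produces the full face count as a by-product.
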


\begin{proposition}\label{proposition: underlying connected subgraph simplicial}
    Let $G=(N,E)$ be a hypergraph. If $\CA_G$ is simplicial, then $\CA_{\underline{G}}$ is simplicial. This means every $\underline{G}_i$ in the decomposition of $\CA_{\underline{G}}=\CA_{\underline{G}_1}\times\dots\times \CA_{\underline{G}_k}$ is either equal to $C_3$ or some $P_n$.
\end{proposition}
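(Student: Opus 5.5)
The plan mirrors the proof of Theorem \ref{theorem: A_B2 is free}, with freeness replaced by simpliciality. Simpliciality is a local property \cite[Proposition~3.2]{stanley:supersolvablelocal}, so Lemmas \ref{lemma: Induced subgraph is localization} and \ref{lemma: contraction of building set} apply. If $\CA_G$ is simplicial, then $\CA_{G[M]}$ and $\CA_{G/e}$ are simplicial for every $M\subseteq N$ and every (hyper)edge $e$. It therefore suffices to show the following: if some component $\underline{G}_i\in\mathscr{C}_G$ is neither $C_3$ nor a path, then a suitable sequence of inductions and contractions produces an arrangement that is not simplicial. We then use the classification of Cuntz and Kühne, which says that the simplicial connected subgraph arrangements are exactly those of $C_3$ and $P_n$. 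Since a product arrangement is simplicial if and only if each factor is, the second sentence of the statement follows from the first.

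First, every simplicial arrangement is free by Deligne's $K(\pi,1)$ theorem combined with Terao's result. So, by Proposition \ref{Proposition: underlying connected subgraph arrangment is free}, each $\underline{G}_i$ is already a path, a cycle, an almost-path $A_{n,k}$, or a path-with-triangle $\Delta_{n,k}$. If one prefers to avoid invoking freeness, the same conclusion comes from rerunning Lemmas \ref{lemma: no free between G3,G4,G5,G6 and K_5}, \ref{lemma: G7 subgraph totally non-free} and \ref{lemma: G8 subgraph totally non-free}. Those proofs produce generic localizations, which are non-simplicial by Lemma \ref{lemma: generic arrangement not simplicial}. The exception is $\CK_1,\CK_2$, where $\CA_{K_4}$ and its deletion are not simplicial by the graphic classification.

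It remains to exclude three cases: cycles $C_n$ with $n\ge 4$, almost-path graphs that are not paths, and path-with-triangle graphs other than $C_3$. The case of $C_n$, $n\ge4$, is easy: contract the underlying edges down to $C_4$. Any hyperedges present either make the induced hypergraph on those four vertices equal to $\CK_1$ or $\CK_2$ (as in Proposition \ref{proposition: no free between Cn and Kn}), or leave exactly $\CA_{C_4}$, which is not simplicial by Cuntz--Kühne. For the other two families, we contract the path parts so that a minimal configuration remains: a triangle with a pendant vertex ($\Delta_{3,1}$-type, on $4$ vertices), or a star $K_{1,3}$. On these four vertices, $G$ induces a hypergraph lying between $\CA_{\underline{G}}$ and $\CA_{K_4}$.

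The main obstacle is this last step. Hyperedges can fill in, so we must check every arrangement between $\CA_{K_{1,3}}$ (respectively $\CA_{\Delta}$) and $\CA_{K_4}$ that is a connected hypersubgraph arrangement with the prescribed underlying graph. This is a finite check in $\mathbb{R}^4$. For each candidate I would either exhibit a generic rank-$3$ localization, as in Lemma \ref{lemma: three connected components generic loca}, or apply the Cuntz--Geis criterion (Proposition \ref{proposition: criteria for simplicity}). Chamber counts would come from Zaslavsky's Theorem, with $b_2$ computed via Corollary \ref{cor_b_2} to speed up the evaluation of $\chi(\CA,-1)$. Only hyperedges of size $3$ or $4$ not already in $\CA_{\underline{G}}$ can occur, and up to symmetry there are few of these, so the enumeration is manageable.
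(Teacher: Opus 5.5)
Drop the freeness shortcut in your second paragraph. Deligne's theorem gives that simplicial arrangements are $K(\pi,1)$. However, I know of no theorem, Terao's or anyone else's, that deduces freeness from the $K(\pi,1)$ property, and freeness of arbitrary simplicial arrangements is not an established result. So Proposition~\ref{Proposition: underlying connected subgraph arrangment is free} cannot be imported. Your fallback does work:
\begin{itemize}
\item Every obstruction in the lemmas for $\CK_3,\dots,\CK_8$ ends in a generic localization, which is non-simplicial by Lemma~\ref{lemma: generic arrangement not simplicial}. This includes the case splits over the extra hyperplanes $H_{15},H_{24},H_{146}$ and $H_{1237},H_{1256},H_{1346}$.
\item In the $\CK_1,\CK_2$ case the induced arrangement is $\CA_{\CK_1}$ or $\CA_{K_4}$, since a genuine hyperedge on the vertex set of $\CK_1$ adds nothing. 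Both are non-simplicial by Cuntz--K\"uhne.
\end{itemize}
With this, your argument goes through.

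Your route is still longer than the paper's, which never passes through $\CK_1,\dots,\CK_8$. The paper argues as follows:
\begin{itemize}
\item If a vertex of $\underline{G}$ has three neighbours, the induced hypersubgraph on it and three of its neighbours gives an arrangement between $\CA_{K_{1,3}}$ and $\CA_{K_4}$.
\item The only genuine hyperedge that adds a hyperplane is the leaf set $\{2,3,4\}$.
\item The graphic cases are $K_{1,3}$, the paw ($K_{1,3}$ plus one edge between leaves), $\CK_1$ and $K_4$. All are excluded by Cuntz--K\"uhne.
\item The two remaining cases are $K_{1,3}$ and the paw, each with $H_{234}$ added. These are excluded by the Cuntz--Geis criterion (Proposition~\ref{proposition: criteria for simplicity}).
\end{itemize}
Hence every vertex of $\underline{G}$ has degree at most $2$, so each component is a path or a cycle. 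Cycles are then handled exactly as you do.

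Your final step is this same finite check. Since you already plan to check everything between $\CA_{K_{1,3}}$ and $\CA_{K_4}$, apply it to an arbitrary vertex of degree $\geq 3$. That makes the whole reduction to the four free families unnecessary. Use the induced hypersubgraph rather than contraction at this point: it gives the four-vertex configuration directly, whereas contracting path parts can turn hyperedges into new edges.

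Your generic-localization idea settles $K_{1,3}$ plus $\{2,3,4\}$ at once, because the leaves induce the generic localization $\{H_2,H_3,H_4,H_{234}\}$. For the paw plus $\{2,3,4\}$, the leaves induce $\{H_2,H_3,H_4,H_{23},H_{234}\}$, which is not generic. For that case the paper relies on the Cuntz--Geis count instead.
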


\begin{proof}
    \noindent Let $N_i$ be the set of vertices of $\underline{G}_i$. By exchanging $G$ with $G[N_i]$, we can assume that $k=1$ (i.e., $\CA_{\underline{G}}$ is irreducible). Cuntz and Kühne proved \cite[Theorem~7.2]{cuntzkuehne:subgrapharrangements} that a connected subgraph arrangement $\CA_{\underline{G}}$ is simplicial if and only if the graph $\underline{G}$ is a triangle ($C_3$) or a path-graph. They showed that if a graph $\underline{G}$ has a vertex with at least $3$ neighbors, then the induced subgraph of $\underline{G}$ on those four vertices is not simplicial. Therefore $\CA_{\underline{G}}$ cannot be simplicial due to Lemma \ref{lemma: Induced subgraph is localization}. As a first step we show that there exists no hypergraph $G$ between $\underline{G}=([4],\{\{1,i\}\mid i=2,3,4\})$ and $K_4$ , such that $\CA_G$ is simplicial. Note that the only genuine hyperedge that adds a new hyperplane to $\CA_{\underline{G}}$ is $\{2,3,4\}$, since all other induced subgraphs of $\underline{G}$ are already connected. So, investigating the connected hypersubgraph arrangements of the hypergraphs in Figure \ref{figure: hypergraphs for prop 6.5} is sufficient, since adding any other edges creates an arrangement that is the connected subgraph arrangement of $G_1$ in Figure \ref{Figure: G1 to G8} or $K_4$ and therefore not simplicial by the result of Cuntz and Kühne.
\begin{figure}
\centering
\scalebox{0.8}{
\begin{tikzpicture}[node distance={10mm}, main/.style = {draw, circle}] 
\filldraw[color=red!60, fill=white!1, thick](0,-1.0) ellipse (1.5 and 0.6);
\node[main] (1) {$1$}; 
\node[main] (3) [below of=1] {$3$}; 
\node[main] (2) [left of=3] {$2$}; 
\node[main] (4) [right of=3] {$4$};
\draw[-] (1) to (2); 
\draw[-] (1) to (3); 
\draw[-] (1) to (4); 
\end{tikzpicture}
\begin{tikzpicture}[node distance={10mm}, main/.style = {draw, circle}] 
\filldraw[color=red!60, fill=white!1, thick](0,-1.0) ellipse (1.5 and 0.6);
\node[main] (1) {$1$}; 
\node[main] (3) [below of=1] {$3$}; 
\node[main] (2) [left of=3] {$2$}; 
\node[main] (4) [right of=3] {$4$};
\draw[-] (1) to (2); 
\draw[-] (1) to (3); 
\draw[-] (1) to (4); 
\draw[-] (2) to (3); 
\end{tikzpicture}}
\caption{Hypergraphs on $4$ vertices, where the corresponding connected hypergraph arrangement
	is not simplicial. They serve as minimal counterexamples in the proof of Proposition \ref{proposition: underlying connected subgraph simplicial}.}\label{figure: hypergraphs for prop 6.5}
\end{figure}
 For both of the pictured hypergraphs, we use Proposition \ref{proposition: criteria for simplicity} and derive the inequalities $880\neq 944$ in the first and $1040\neq 1088$ in the second case, so the connected hypersubgraph arrangements fail to be simplicial. Hence, all vertices of $\underline{G}$ have at most $2$ neighbors.\\ 
 Since all induced subgraphs $C_4[e]$ for a genuine hyperedge $e$ are already connected, we solely have to consider adding edges. States differently, every connected subhypergraph arrangement between $\CA_{C_4}$ and $\CA_{K_4}$ is a connected subgraph arrangement. Now assume that $\underline{G}$ has $C_n$ for $n\geq 4$ as a subgraph. Contracting edges of $C_n$ until it becomes $C_4$ and restricting to the corresponding induced hypersubgraph creates one of the connected subgraph arrangements $\CA_{C_4}, \CA_{G_1},$ or $\CA_{K_4}$, none of which is simplicial. Combining these findings with Lemmas \ref{lemma: Induced subgraph is localization} and \ref{lemma: contraction of building set} finishes the proof.
\end{proof}

\begin{lemma}\label{lemma: circle graph no hyperedge if simplicial}
    Let $G=(N,E)$ be a hypergraph. Assume that in the underlying connected subgraph arrangement $\CA_{\underline{G}}=\CA_{\underline{G}_1}\times\dots\times \CA_{\underline{G}_k}$ there exists an $1\leq i\leq k$ such that $\underline{G}_i=(N_i,\underline{E}_i)=C_3$. If there exists a genuine hyperedge $e\in E$ such that $e\neq N_i$ and $e\cap N_i\neq\emptyset$, then $\CA_G$ is not simplicial.\\
\end{lemma}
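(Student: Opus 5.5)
We reduce to a small configuration via localization and contraction, and then find a non-simplicial localization there. Simpliciality is local, so Lemmas \ref{lemma: Induced subgraph is localization} and \ref{lemma: contraction of building set} let us pass to induced hypersubgraphs and contractions without losing it. After relabelling, $N_i=\{1,2,3\}$ carries the triangle $C_3$, and $e$ is a genuine hyperedge with $e\cap N_i\neq\emptyset$ and $e\neq N_i$. We first check that $e\not\subseteq N_i$. Indeed, $e\subseteq N_i$ with $|e|\geq 3$ would force $e=N_i$. Hence $e$ contains some vertex $v\notin N_i$, and we may assume $e\ne N_i$ meets $N_i$ in one, two or three vertices.

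The plan is to shrink the picture to a hypergraph on four or five vertices.

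\begin{enumerate}
\item Restrict to $M=N_i\cup e$, using Lemma \ref{lemma: Induced subgraph is localization}.
\item Contract the simple edges of $\underline G[e\setminus N_i]$, and the edges joining these components to each other inside $M$. By Remark \ref{Remark: Underlying Connected Subgraph Remarks}, no simple edge joins $N_i$ to $e\setminus N_i$, since $\underline{G}_i$ is a connected component of $\underline G$. The contraction is only allowed along edges of $G$. Where the components of $e\setminus N_i$ are not joined by simple edges, we instead restrict further, keeping only one vertex $v\in e\setminus N_i$. This works because $G[N_i\cup\{v\}]$ need not contain $e$, so it is safer to keep $e$ and contract.
\end{enumerate}

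To keep $e$, we contract inside $e\setminus N_i$ where possible. Otherwise we accept a hypergraph on $N_i\cup(e\setminus N_i)$ in which the only connection from $N_i$ to the outside is through genuine hyperedges. Proposition \ref{proposition: underlying connected subgraph simplicial} then forces the remaining simple graph on $e\setminus N_i$ to be a disjoint union of paths and triangles. This still leaves several cases, so I would aim to isolate the minimal ones. These are $e\cap N_i$ of size $1$ or $2$ with a single outside vertex $4$, giving $e=\{1,4\}\cup\ldots$. Here $|e|\geq 3$ forces the cases $e=\{1,2,4\}$, $e=\{1,4,5\}$ with $4,5$ possibly adjacent, and $e=\{1,2,3,4\}$.

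For each minimal hypergraph, the arrangement is determined by the building closure. For example, $\{1,2,4\}$ together with $C_3$ yields $H_1,\dots,H_4,H_{12},H_{13},H_{23},H_{123},H_{124},H_{1234}$. We then exhibit a generic rank $3$ localization with four hyperplanes and apply Lemma \ref{lemma: generic arrangement not simplicial}. Lemma \ref{lemma: connected subgraph arrangements are locallyA2} can guide the search for such a localization, or we look for a rank $3$ $X$ where three planes meet generically. A candidate for the case $e=\{1,2,4\}$ is $X=H_{3}\cap H_{124}\cap H_{1234}$, but this has only three hyperplanes. In that case, or if no generic localization exists, I would fall back on Proposition \ref{proposition: criteria for simplicity} with Zaslavsky's Theorem \ref{proposition: zaslavsky simplicial criteria}, as was done in Proposition \ref{proposition: underlying connected subgraph simplicial}: compute $\chi$ of the arrangement and of its restrictions, and show $n|\mathcal K(\CA)|\neq 2\sum_H|\mathcal K(\CA^H)|$.

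The main obstacle is the reduction step. We must make sure that the contractions and restrictions always land in one of finitely many small hypergraphs while preserving a genuine hyperedge that meets the triangle properly. We must also settle the finite number of chamber-count computations for these base cases, each of which has to yield a strict inequality.
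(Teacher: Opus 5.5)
Your outline names the right tools, but it is not yet a proof. The reduction step, which you yourself flag as the main obstacle, is never carried out, and no base case is actually settled. Passing to $G[N_i\cup e]$ keeps every other hyperedge inside $N_i\cup e$, so you cannot control the building closure. Passing to $N_i\cup\{v\}$ discards $e$, as you notice.

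The missing idea is minimality, together with localizing at $G[e]$ instead of $G[N_i\cup e]$; this is how the paper argues.
\begin{itemize}
\item First contract all simple edges outside $N_i$, including newly created ones. If a simple edge from $N_i$ to an outside vertex ever appears, Proposition~\ref{proposition: underlying connected subgraph simplicial} finishes, because the component of the triangle is then neither $C_3$ nor a path.
\item If some genuine hyperedge is disjoint from $N_i$, take an inclusion-minimal one $e'$. Then $\CA_{G[e']}=\{H_v\mid v\in e'\}\cup\{H_{e'}\}$ is generic of rank $\vert e'\vert\geq 3$ with $\vert e'\vert+1$ hyperplanes, so Lemma~\ref{lemma: generic arrangement not simplicial} applies.
\item Otherwise pick $e$ inclusion-minimal among genuine hyperedges meeting $N_i$ and different from $N_i$. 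Then $G[e]$ contains only $e$, the triangle edges inside $e\cap N_i$ (and what they generate), and isolated outside vertices.
\item If $\vert e\cap N_i\vert=1$, then $\CA_{G[e]}$ is generic.
\item If $\vert e\cap N_i\vert\geq 2$ and $e$ has at least two outside vertices, contracting the triangle edges inside $e$ to a point again produces a generic arrangement.
\end{itemize}
This is what turns your case list into a justified finite one. As written, the list is also inconsistent: you speak of ``a single outside vertex'' yet include $e=\{1,4,5\}$. Moreover, $\{1,4,5\}$ with $4,5$ adjacent is not a base case at all, since contracting $\{4,5\}$ creates the simple edge $\{1,4\}$.

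Only two configurations survive, and they need the computations your proposal leaves open.
\begin{itemize}
\item For $e=\{1,2,4\}$ you do not need the ten-hyperplane rank-$4$ arrangement on $N_i\cup\{4\}$. The localization $G[e]\cong([3],\{\{1,2\},\{1,2,3\}\})$ gives $\{H_1,H_2,H_4,H_{12},H_{124}\}$ with $18$ chambers. Proposition~\ref{proposition: criteria for simplicity} then yields $3\cdot 18=54\neq 56$.
\item For $e=\{1,2,3,4\}$ the arrangement has $96$ chambers, and the criterion gives $384\neq 404$.
\end{itemize}
Finally, the localization you suggested for $e=\{1,2,4\}$ fails for a more basic reason than having only three hyperplanes. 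Since $\alpha_{H_3}+\alpha_{H_{124}}=\alpha_{H_{1234}}$, the intersection $H_3\cap H_{124}\cap H_{1234}$ has rank $2$, not $3$.
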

\begin{proof}
    Relabel the vertices such that $\underline{G} = \underline{G}[\{1,2,3\}] = C_3 = (N_1,\CE_1)$ and assume $e\neq N_1$ (otherwise $H_e\in \CA_{\underline{G}}$). By Lemma \ref{lemma: Induced subgraph is localization} it suffices to show $G = G[N_1\cup\{e\}]$ is not simplicial. By Proposition \ref{proposition: underlying connected subgraph simplicial}, every graph $\underline{G}_i$ in the underlying subgraph $\underline{G}$ is either equal to $C_3$ or some $P_n$.\\
    Subsequently contract all edges $e'\subseteq \{3,4,\dots,n\}$ and contract any new edges which might get created in the process as well. If during this process an edge $\{i,j\}$ with $i\in\{1,2,3\}, j\in e\setminus\{1,2,3\}$ gets created, then the corresponding arrangement is not simplicial by Proposition \ref{proposition: underlying connected subgraph simplicial} and we are done. After finishing this procedure, $G$ does not contain any edges other than the ones of $\underline{G}$. We showcase the corresponding underlying connected subgraph arrangement on the left in Figure \ref{figure: graphs for simplicity lemma}, where $4\leq j\leq n$.

\begin{figure}
\scalebox{0.8}{
\begin{tikzpicture}[node distance={10mm}, main/.style = {draw, circle}] 
\node[main] (1) {$1$}; 
\node[main] (2) [below left of=1] {$2$}; 
\node[main] (3) [below right of=1] {$3$};
\node[main] (4) [right of=3] {$4$};
\node[main] (5) [right of=4] {$\dots$};
\node[main] (6) [right of=5] {$j$};
\draw[-] (1) to (2); 
\draw[-] (3) to (2); 
\draw[-] (1) to (3); 
\end{tikzpicture}\hspace{2cm}
\begin{tikzpicture}[node distance={10mm}, main/.style = {draw, circle}] 
\filldraw[color=red!60, fill=white!1, thick](0.5, 0) ellipse (2.25 and 0.75);
\node[main] (2) {$2$}; 
\node[main] (1) [left of=2] {$1$}; 
\node[main] (3) [right of=2] {$\dots$};
\node[main] (4) [right of=3] {\small{$\vert e'\vert$}};
\end{tikzpicture}}
\caption{First set of hypergraphs, where the corresponding connected hypersubgraph
	arrangement is not simplicial. They are used in the proof of Lemma \ref{lemma: circle graph no hyperedge if simplicial}.}\label{figure: graphs for simplicity lemma}
\end{figure}

\noindent Assume there exists a hyperedge $e'\subseteq \{4,5,\dots,j\}$ with $\vert e'\vert\geq 3$ and choose $e'$ such that no other hyperedge is included in it. Now $G[e']$ is the hypergraph on the right in Figure \ref{figure: graphs for simplicity lemma}, meaning it yields a generic arrangement as in Lemma \ref{lemma: generic arrangement not simplicial}, and $\CA_G$ is not simplicial.

\noindent Now assume that there exists a hyperedge $e$ with $e \cap \{1,2,3\}\neq\emptyset$ and $\vert e\vert\geq 3$ that contains no other hyperedge. This entails that the underlying connected subgraph arrangement of $G[\{1,2,3\}\cup e]$ is contained in Figure \ref{figure: more graphs for simplicity lemma}, where the hyperedge $e$ is shown in red as well.

\begin{figure}
\centering
\scalebox{0.7}{
\begin{tikzpicture}[node distance={10mm}, main/.style = {draw, circle}] 
\filldraw[color=red!60, fill=white!1, thick](2.25, -0.75) ellipse (2.1 and 0.9);
\node[main] (1) {$1$}; 
\node[main] (2) [below left of=1] {$2$}; 
\node[main] (3) [below right of=1] {$3$};
\node[main] (4) [right of=3] {$4$};
\node[main] (5) [right of=4] {$\dots$};
\node[main] (6) [right of=5] {$j$};
\draw[-] (1) to (2); 
\draw[-] (3) to (2); 
\draw[-] (1) to (3); 
\end{tikzpicture}
\begin{tikzpicture}[node distance={10mm}, main/.style = {draw, circle}] 
\filldraw[color=red!60, fill=white!1, thick](1.5, -0.5) ellipse (2.8 and 1.1);
\node[main] (1) {$1$}; 
\node[main] (2) [below left of=1] {$2$}; 
\node[main] (3) [below right of=1] {$3$};
\node[main] (4) [right of=3] {$4$};
\node[main] (5) [right of=4] {$\dots$};
\node[main] (6) [right of=5] {$j$};
\draw[-] (1) to (2); 
\draw[-] (3) to (2); 
\draw[-] (1) to (3); 
\end{tikzpicture}
\begin{tikzpicture}[node distance={11mm}, main/.style = {draw, circle}] 
\filldraw[color=red!60, fill=white!1, thick](1.65, -0.8) ellipse (3.2 and 0.55);
\node[main] (3) {$1$};
\node[main] (2) [below right of=3]{$3$}; 
\node[main] (1) [below left of=3] {$2$}; 
\node[main] (4) [right of=2] {$4$};
\node[main] (5) [right of=4] {$\dots$};
\node[main] (6) [right of=5] {$j$};
\draw[-] (1) to (2); 
\draw[-] (1) to (3); 
\draw[-] (3) to (2); 
\end{tikzpicture}} 
\caption{Second set of hypergraphs, where the corresponding connected hypersubgraph
	arrangement is not simplicial. They are used in the proof of Lemma \ref{lemma: circle graph no hyperedge if simplicial}.}\label{figure: more graphs for simplicity lemma}
\end{figure}
Note that in the hypergraph on the top left, we have $j\geq 5$, otherwise $e$ is an edge, and we get a contradiction through Proposition \ref{proposition: underlying connected subgraph simplicial}. But since $e$ does not contain any other hyperedges, $\CA_{G[e]}$ is the generic arrangement from Lemma \ref{lemma: generic arrangement not simplicial} and therefore not simplicial. We can assume that in the second picture, we have $j=4$ and in the third picture, we have $j=3$, because otherwise we contract $C_3$ or $P_2$ to a single vertex, and once again get a contradiction through Lemma \ref{lemma: generic arrangement not simplicial}. Assuming that $j=4$ in the second and $j=3$ in the third picture, we use Theorem \ref{proposition: zaslavsky simplicial criteria} to show that the corresponding connected hypersubgraph arrangements are not simplicial as we end up at $384\neq 404$ and $54\neq 56$, respectively.
\end{proof}

\noindent We are ready to classify all hypergraphs $G$ such that the corresponding connected hypersubgraph arrangement $\CA_G$ is simplicial.

\begin{proof}[Proof of Theorem \ref{thm:simpliciality}]
    Assume that $\CA_G$ is simplicial. We show that there does not exist a genuine hyperedge $e\in E$, and therefore $\CA_G$ is a connected subgraph arrangement. Use Proposition \ref{proposition: underlying connected subgraph simplicial} to deduce that the underlying connected subgraph arrangement is of the form $\CA_{\underline{G}}=\CA_{\underline{G}_1}\times\dots\times \CA_{\underline{G}_k}$ where every $\underline{G}_i=(N_i,\underline{E}_i)$ is equal to a path-graph or $C_3$. Applying Lemma \ref{lemma: circle graph no hyperedge if simplicial}, we see that there does not exist a vertex $j\in[n]$ and a genuine hyperedge $e\in E$ such that $\underline{G}_j=(N_j,\underline{E}_j)=C_3, e\cap N_j\neq \emptyset,$ and $H_e\not\in\CA_{\underline{G}}$. We can conclude that $\underline{G}_i$ is not connected to another connected component of $\underline{G}$, and it is sufficient to investigate the induced subgraph on the set $\cup_{i, (\underline{G}_i\text{ is a path-graph)}}N_i$. Consequently, let every $\underline{G}_i$ be equal to a path-graph. Choose a genuine hyperedge $e\in E$ that does not contain another hyperedge. After possibly having to contract some edges, the arrangement $\CA_{G[e]}$ is either equal to a generic arrangement described in Lemma $\ref{lemma: generic arrangement not simplicial}$, and therefore not simplicial, or to the connected hypersubgraph arrangement coming from the hypergraph defined by $([3],\{\{1,2\},\{1,2,3\}\})$, and not simplicial, as seen in the proof of Lemma \ref{lemma: circle graph no hyperedge if simplicial} which would contradict our assumption that $\CA_G$ is simplicial.\\
    This shows that no genuine hyperedges exist, so $G$ is a graph and $\CA_G$ is a connected subgraph arrangement.
\end{proof}

\section{Further research}
There are a few natural questions that arise from our observations and results thus far: 
\begin{enumerate}
    \item The concept of building sets is more general than just the Boolean lattice case. One might define general building set arrangements analogously to Definition \ref{definition: connected hypersubgraph arrangements} and analyse these arrangements along the lines proposed here (freeness, simpliciality, etc.) or in the context of general building set theory.
    \item Is there a hypergraphic complete characterization of free building set arrangements? 
    \item The approach with the underlying connected subgraph arrangement can be seen as focusing on $E_2 := \{e \in G \mid \vert e \vert = 2\}$. Generalizing this approach, can one say something about the freeness by focusing on the analogue arrangements with edge set $E_n$ for general $n$? 
    \item As can be seen in Table \ref{tab:building sets_4}, there are different free arrangements with the same exponents. Is there a hypergraphic or building set theoretical explanation? 
\end{enumerate}

\printbibliography

\appendix 
\section{}\label{sec:appendix}

This table lists all free building set arrangements up until $n \leq 4$. It can be seen that there are more non-graphic free arrangements than graphic ones. Also, there are different arrangements with the same exponents. 
\begin{small}
\begin{longtable}{@{}llll p{1in}@{}}

\toprule
 \textsc{Building set} & \textsc{Exponents} & \textsc{$\mathcal{K(\CA)}$} & \textsc{Graphic}\\
 \midrule
 \{\{1\}\} & (1) & 2 & true ($P_1$) \\ 
 \{\{1\}, \{2\}, \{1, 2\}\} & (1,2) & 3 & true ($P_2$)\\ 
\{\{1\}, \{2\}, \{3\}, \{1, 2\}, \{1, 2, 3\}\} & (1,2,2)& 18 & false \\ 
 \{\{1\}, \{2\}, \{3\}, \{1, 2\}, \{2, 3\}, \{1, 2, 3\}\} & (1,2,3) & 24 & true ($P_3$) \\ 
\{\{1\}, \{2\}, \{3\}, \{1, 2\}, \{1, 3\}, \{2, 3\}, \{1, 2, 3\}\} & (1,3,3) & 32& true ($C_3$)\\ 
 \{\{1\},\{2\},\{3\},\{4\},\{1, 2\},\{3, 4\},\{1, 2, 3, 4\}\} & (1,2,2,2) & 54& false \\ 
 \{\{1\},\{2\},\{3\},\{4\},\{1, 2\},\{1, 2, 3\},\{1, 2, 3, 4\}\} &  (1,2,2,2) & 54& false \\ 
 \{\{1\},\{2\},\{3\},\{4\},\{1, 2\},\{3, 4\},\{1, 2, 3\},\{1, 2, 3, 4\}\} & (1,2,2,3) & 72& false \\
\{\{1\},\{2\},\{3\},\{4\},\{1, 2\},\{1, 2, 3\},\{1, 2, 4\},\{1, 2, 3, 4\}\} & (1,2,2,3) & 72& false \\ 
\{\{1\},\{2\},\{3\},\{4\},\{1, 2\},\{2, 3\},\{1, 2, 3\},\{1, 2, 3, 4\}\} & (1,2,2,3) & 72 & false \\ 
 \{\{1\},\{2\},\{3\},\{4\},\{1, 2\},\{3, 4\},\{1, 2, 3\},\{1, 2, 4\},\{1, 2, 3, 4\}\} & (1,2,3,3) &96& false \\ 
\{\{1\},\{2\},\{3\},\{4\},\{1, 2\},\{3, 4\},\{1, 2, 3\},\{1, 3, 4\},\{1, 2, 3, 4\}\} & (1,2,3,3) &96& false  \\ 
 \{\{1\},\{2\},\{3\},\{4\},\{1, 2\},\{2, 3\},\{1, 2, 3\},\{1, 2, 4\},\{1, 2, 3, 4\}\} & (1,2,3,3) &96& false \\ 
 \{\{1\},\{2\},\{3\},\{4\},\{1, 2\},\{1, 3\},\{2, 3\},\{1, 2, 3\},\{1, 2, 3, 4\}\} & (1,2,3,3) &96& false \\ 
 \{\{1\},\{2\},\{3\},\{4\},\{1, 2\},\{2, 3\},\{3, 4\},\{1, 2, 3\},\{2, 3, 4\},\{1, 2, 3, 4\}\} & (1,2,3,4) &120& true ($P_4$) \\ 
\{\{1\},\{2\},\{3\},\{4\},\{1, 2\},\{3, 4\},\{1, 2, 3\},\{1, 2, 4\},\{1, 3, 4\},\{1, 2, 3, 4\}\} & (1,3,3,3) &128& false \\ 
 \{\{1\},\{2\},\{3\},\{4\},\{1, 2\},\{1, 3\},\{2, 3\},\{1, 2, 3\},\{1, 2, 4\},\{1, 2, 3, 4\}\} & (1,3,3,3) &128& false \\ 
 \{\{1\},\{2\},\{3\},\{4\},\{1, 2\},\{2, 3\},\{1, 2, 3\},\{1, 2, 4\},\{2, 3, 4\},\{1, 2, 3, 4\}\} & (1,3,3,3) &128& false \\ 
 \{\{1\},\{2\},\{3\},\{4\},\{1, 2\},\{2, 3\},\{3, 4\},\{1, 2, 3\},\{1, 2, 4\},\{2, 3, 4\},\{1, 2, 3, 4\}\} & (1,3,3,4) &160& false \\ 
\{\{1\},\{2\},\{3\},\{4\},\{1, 2\},\{2, 3\},\{2, 4\},\{1, 2, 3\},\{1, 2, 4\},\{2, 3, 4\},\{1, 2, 3, 4\}\} & (1,3,3,4) &160& true ($A_{3, 2}$)\\ 
\{\{1\},\{2\},\{3\},\{4\},\{1, 2\},\{1, 3\},\{1, 4\},\{3, 4\},\{1, 2, 3\},\{1, 2, 4\},\{1, 3, 4\},\{1, 2, 3, 4\}\} & (1,3,4,4) &200& true ($\Delta_{3,2}$) \\ 
\{\{1\},\{2\},\{3\},\{4\},\{1, 2\},\{1, 3\},\{2, 4\},\{3, 4\},\{1, 2, 3\},\{1, 2, 4\},\{1, 3, 4\},\{2, 3, 4\},\{1, 2, 3, 4\}\} & (1,4,4,4) &250& true ($C_4$) \\ 

\bottomrule
\caption{Free Boolean building set arrangements for dimension up to 4.} 
\label{tab:building sets_4}
\end{longtable} 
\end{small}

\end{document}